   \def\MR#1{}
\numberwithin{equation}{section}
\theoremstyle{plain}
\newtheorem{theorem}{Theorem}
\numberwithin{theorem}{section}
\newtheorem{lemma}[theorem]{Lemma}
\newtheorem{corollary}[theorem]{Corollary}
\newtheorem{prop}[theorem]{Proposition}
\newtheorem*{namedtheorem}{\theoremname}
\newcommand{\theoremname}{testing}
\newenvironment{named}[1]{\renewcommand{\theoremname}{#1}\begin{namedtheorem}}{\end{namedtheorem}}
\theoremstyle{definition}
\newtheorem{remark}[theorem]{Remark}
\newtheorem{defn}[theorem]{Definition}
\newcommand{\R}{\mathbb{R}}
\newcommand{\Z}{\mathbb{Z}}
\newcommand{\Q}{\mathbb{Q}}
\newcommand{\C}{\mathbb{C}}
\newcommand{\HH}{\mathbb{H}}
\newcommand{\orb}{{\mathcal O}} % phase this out
\newcommand{\orbO}{{\mathcal O}}
\newcommand{\omin}{{\orbO_{\min}}}
\newcommand{\orbP}{{\mathcal P}}
\newcommand{\ee}{{\mathbf e}}
\newcommand{\vv}{{\mathbf v}}
\newcommand{\xx}{{\mathbf x}}
\newcommand{\yy}{{\mathbf y}}
\newcommand{\bdy}{{\partial}}
\newcommand{\inv}{{-1}}
\newcommand{\trace}{\operatorname{tr}}
\newcommand{\isom}{\operatorname{Isom}}
\newcommand{\Comm}{\operatorname{Comm}}
\newcommand{\from}{\colon}
\newcommand{\Ext}{\operatorname{Ext}}
\renewcommand{\setminus}{\smallsetminus}
\renewcommand{\bar}[1]{\overline{#1}}
\date{\today}
\keywords{Tiling link, arithmetic, commensurability, canonical polyhedral decomposition}
\title[Arithmeticity and commensurability of links in thickened surfaces]
{Arithmeticity and commensurability \\ of links in thickened surfaces}
\author{David Futer} 
\address{
Department of Mathematics, 
Temple University,
Philadelphia, PA 19122}
\email{dfuter@temple.edu}
\author{Rose Kaplan-Kelly}
\address{
Mathematics, Statistics, and Actuarial Science Department, 
Le Moyne College,
Syracuse, NY 13214}
\email{kaplanr@lemoyne.edu}
\begin{document}
\begin{abstract}
The family of \textit{right-angled tiling links} consists of links built from regular $4$--valent tilings of constant-curvature surfaces that contain one or two types of tiles. The complements of these links admit complete hyperbolic structures and contain two totally geodesic checkerboard surfaces that meet at right angles. In this paper, we give a complete characterization of which right-angled tiling links are arithmetic, and which are pairwise commensurable. The arithmeticity classification exploits symmetry arguments and the combinatorial geometry of Coxeter polyhedra. The commensurability classification relies on identifying the canonical decompositions of the link complements, in addition to number-theoretic data from invariant trace fields.
\end{abstract}
% Keywords: tiling link, commensurability, arithmeticity, virtual link, coxeter group, geometric decomposition

\maketitle

\section{Introduction}
A pair of hyperbolic $3$--manifolds (or $3$--orbifolds) are called called \emph{commensurable} if they have isometric finite-sheeted covers.
The equivalence relation of commensurability provides a way to organize hyperbolic manifolds and gain information about properties that are well-behaved under covering maps. Accordingly, there has been considerable effort expended toward classifying certain families of manifolds, including knot and link complements, into commensurability classes. 

A cusped hyperbolic $3$--manifold $M$ is called \emph{arithmetic} if $M$ is commensurable to $\HH^3 / PSL_2(\mathcal{O}_d)$, where $\mathcal{O}_d$ is the ring of integers of $\Q(\sqrt{-d})$.
Arithmetic and non-arithmetic commensurability classes exhibit very different qualitative behavior. For instance, a non-arithmetic commensurability class has a unique minimal element, whereas an arithmetic commensurability class has infinitely many. (See Theorem~\ref{Thm:MargulisArithmeticity} for a precise statement.) Furthermore, number-theoretic tools related to arithmeticity provide useful commensurability invariants. Thus any study of the commensurability classes of a family of manifolds usually begins by identifying its arithmetic members.

In this paper, we study links in thickened surfaces $F \times I$, defined by alternating diagrams on a closed surface $F$ that correspond to tilings by regular polygons. We focus on the subset of these links whose exteriors admit a decomposition along their checkerboard surfaces into right-angled generalizations of polyhedra. By Definition~\ref{Def:RightAngledLink} and Theorem~\ref{Thm:RGCR}, these are exactly the links whose corresponding tilings use only $m$-gons and $n$-gons, with a $[m,n,m,n]$ pattern at every vertex.
Such links are called \emph{right-angled tiling links}. 

Our main results (Theorems~\ref{Thm:MainArithmeticity} and~\ref{Thm:MainCommensurability}) give a complete characterization of which right-angled tiling links are arithmetic, and which ones are pairwise commensurable. We prove:

\begin{theorem}\label{Thm:MainArithmeticity}
    The following right-angled tiling links have arithmetic exteriors:
    \begin{itemize}
    \item The links on $S^2$ corresponding to the $[3,3,3,3]$ and $[4,3,4,3]$ tilings.
    
        \item The links on $T^2$ corresponding to the $[4,4,4,4]$ and $[6,3,6,3]$ tilings.
         
        \item The links on $F = S_g$ for $g \geq 2$ corresponding to the $[6,4,6,4]$ and $[6,6,6,6]$ tilings.
    \end{itemize}
    All other right-angled tiling link exteriors are non-arithmetic.
\end{theorem}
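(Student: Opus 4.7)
The plan is to exploit the large symmetry group of each right-angled tiling link complement to reduce the arithmeticity question to one about a single right-angled hyperbolic Coxeter polyhedron, and then apply Vinberg's arithmeticity criterion together with the known classification of arithmetic right-angled Coxeter groups in $\HH^3$.

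For each right-angled tiling link $L \subset F \times I$ corresponding to an $[m,n,m,n]$ tiling, I would first identify the full isometry group $G$ of the complement $M = (F \times I) \setminus L$. The group $G$ contains the symmetries of the tiling, the involution $(p,t) \mapsto (p,1-t)$ swapping the two sides of $F$, and (when $m=n$) the involution exchanging checkerboard colors. Since the right-angled polyhedral decomposition of Theorem~\ref{Thm:RGCR} is canonical, $G$ acts by isometries, and the quotient $\mathcal{O} = M/G$ is a hyperbolic orbifold whose underlying space is a single finite-volume right-angled Coxeter polyhedron $P = P(m,n)$. Because arithmeticity is a commensurability invariant, $M$ is arithmetic if and only if the reflection group $\Gamma(P)$ is arithmetic.

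For each of the finitely many cases listed in the theorem, the strategy is direct identification: match $P(m,n)$ to a known arithmetic right-angled hyperbolic Coxeter polyhedron, or equivalently exhibit an explicit commensurability with a Bianchi orbifold $\HH^3/PSL_2(\mathcal{O}_d)$. For example, the $[3,3,3,3]$ link on $S^2$ decomposes into regular ideal octahedra, placing it in a Bianchi class; the toroidal $[4,4,4,4]$ and $[6,3,6,3]$ links are identified as Bianchi orbifolds via their natural ideal polyhedral decompositions; and analogous arguments realise $[4,3,4,3]$ on $S^2$ along with $[6,4,6,4]$ and $[6,6,6,6]$ on higher-genus surfaces by matching $P(m,n)$ to an entry of the existing arithmetic list.

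For every remaining hyperbolic pair $(m,n)$, I would rule out arithmeticity using Vinberg's criterion. The Gram matrix of $P(m,n)$ has off-diagonal entries $0$ for perpendicular faces and $-\cosh \ell_{ij}$ for non-adjacent faces at orthogeodesic distance $\ell_{ij}$, where each $\ell_{ij}$ is an explicit hyperbolic-trigonometric function of $\cos(\pi/m)$ and $\cos(\pi/n)$. Vinberg's cyclic products of Gram entries lie in a subfield of the totally real cyclotomic field $\Q(\cos(\pi/m), \cos(\pi/n))$, and arithmeticity of the cusped 3-orbifold $\mathcal{O}$ forces these cyclic invariants to be rational. Since $\cos(\pi/m)$ is rational only for $m \in \{2,3,4,6\}$, this restriction alone eliminates most $(m,n)$ outright; a finite residual check among the small combinations selects exactly the pairs in the theorem. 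The main obstacle will be making this non-arithmeticity argument uniform: one must exhibit, for each non-exceptional $(m,n)$, an explicit cyclic Gram product in $\Gamma(P(m,n))$ that is a genuinely irrational totally real algebraic number, ruling out accidental cancellations that would push it back into $\Q$.
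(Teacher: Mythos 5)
Your overall strategy --- quotient the (doubled) link exterior to a Coxeter orbifold, feed its Gram matrix into Vinberg's criterion, use rationality of cyclic products to force $m,n$ into $\{3,4,6\}$, and cite Hatcher and Champanerkar--Kofman--Purcell for the $S^2$ and $T^2$ cases --- is essentially the paper's route. However, two points are genuine gaps rather than presentational details. First, the quotient of $M$ (really of the double $DM$, which you must pass to when $F$ is hyperbolic since $M$ has totally geodesic boundary) by its full symmetry group is \emph{not} a right-angled Coxeter polyhedron: the polyhedron $P$ has dihedral angles $\pi/m$ and $\pi/n$ along the faces coming from the checkerboard surfaces (see Figure~\ref{Fig:DMReflectionQuotient}), so your appeal to a classification of arithmetic right-angled reflection groups does not apply, and for the positive cases $(m,n)=(6,4)$ and $(6,6)$ the step ``match $P(m,n)$ to an entry of the existing arithmetic list'' is asserted but never carried out. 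The paper instead verifies Vinberg's two conditions directly, which requires knowing the full Gram matrix, including the entries $-2\cosh(\ell_{ij})$ for the two ultraparallel face pairs; these are extracted from the fact that $G(P)$ has rank $4$, by setting appropriate $5\times 5$ minors to zero (Lemma~\ref{Lem:Gram matrix}), after which one checks that all entries are algebraic integers and that the unique $6$--cycle product ($96$ for $(6,4)$, $72$ for $(6,6)$) is rational. Without those $\cosh(\ell_{ij})$ values, your proposal cannot certify arithmeticity in the genus $\geq 2$ cases.

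Second, a smaller but real slip: the quantity Vinberg's criterion constrains is the cyclic product $a_{12}a_{21}=4\cos^2(\pi/m)=2\cos(2\pi/m)+2$, so the rationality question concerns $\cos(2\pi/m)$, not $\cos(\pi/m)$ (the latter is already irrational for $m=4,6$); Niven's theorem then gives $m,n\in\{3,4,6\}$, which on a hyperbolic surface leaves exactly $(6,4)$ and $(6,6)$, and on $S^2$ leaves $[5,3,5,3]$ to be excluded by the same computation $4\cos^2(\pi/5)\notin\Q$ (the paper's Proposition~\ref{Prop:S^3Arithmeticity}). Relatedly, the ``uniformity/accidental cancellation'' obstacle you anticipate does not arise: the length-two cycles $a_{ij}a_{ji}$ are themselves cyclic products, so their irrationality alone rules out arithmeticity, and no longer cycles need to be analyzed for the negative direction.
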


We remark that when $F$ is a surface of genus $g \geq 2$, the hyperbolic structure on $M = (F \times I) \setminus L$ has totally geodesic boundary. We double $M$ along the boundary to obtain a finite-volume manifold $DM$, and Theorem~\ref{Thm:MainArithmeticity} characterizes exactly when $DM$ is arithmetic.

Theorem~\ref{Thm:MainArithmeticity} fits into a pattern of results showing that arithmetic $3$--manifolds are sparse among hyperbolic manifolds. For instance, Borel showed that there are only finitely 
many arithmetic hyperbolic manifolds up to any volume $V$~\cite{Borel:ArithmeticityVolume}, whereas for $V \geq 3$ the number of hyperbolic manifolds becomes infinite without the arithmetic restriction. Turning to links in $S^3$, Reid showed that the figure-8 knot is the only arithmetic knot complement in $S^3$ \cite{Reid:Figure8Knot}. 
Gehring, Maclachlan, and Martin proved that there are only 4 arithmetic two-bridge link complements~\cite{GMM:TwoGenerator}.
Baker, Goerner, and Reid showed that only 48 link complements in $S^3$ belong to a subclass of arithmetic manifolds called principal congruence manifolds~\cite{BakerGoernerReid:PrincipalCongruence}. 
In the most direct precursor of Theorem~\ref{Thm:MainArithmeticity}, Champanerkar, Kofman, and Purcell have identified the arithmetic links among links in the thickened torus corresponding to regular Euclidean tilings~\cite{CKP:Biperiodic}.

The proof of Theorem~\ref{Thm:MainArithmeticity} exploits the fact that right-angled tiling links are highly symmetric. Indeed, each (doubled) link exterior covers a Coxeter orbifold $\orbP$, whose fundamental domain is a Coxeter polyhedron $P$. (See Figures~\ref{Fig:AngleCalcs} and~\ref{Fig:DMReflectionQuotient} for a visual preview.) Accordingly, we can prove Theorem~\ref{Thm:MainArithmeticity} by employing Vinberg's characterization of arithmetic reflection groups \cite{Vinberg67}, which is reviewed in Section~\ref{Sec:Orbifolds}. In his original work~\cite{Vinberg67}, Vinberg used this criterion to completely characterize the arithmeticity of reflection groups in an $n$--dimensional simplex. More recently, Vinberg's criterion has been used to characterize the arithmeticity of 
various families of Coxeter polyhedra. 
%families of compact $4$-- and $5$--dimensional Coxeter polyhedra (Bogachev, Douba, and Raimbault \cite{BDR:InfiniteCommens}) 
These include
L\"obell polyhedra (Bogachev and Douba \cite{BogachevDouba:Lobell}) and certain higher-dimensional polyhedra (Bogachev, Kolpakov, and Kontorovich \cite{BogachevKolpakovKontorovich}).
Using commensurability invariants that arise in Vinberg's criterion, Bogachev, Douba, and Raimbault 
found that their specified families of hyperbolic polyhedra contain infinitely many commensurability classes \cite{BogachevDouba:Lobell, BDR:InfiniteCommens}. 
%However, they did not obtain a complete classification. 

One notable strength of our results is that we are able to give a complete commensurability classification of the exteriors of right-angled tiling links.

\begin{theorem}\label{Thm:MainCommensurability}
Suppose that $L \subset F \times I$ and $L' \subset F' \times I$ are right-angled tiling links. Then their exteriors (or doubled exteriors, if the surfaces are hyperbolic) are commensurable if and only if one of the following holds:
    \begin{itemize}
        \item Both $L$ and $L'$ correspond to the same $[m,n,m,n]$ tiling.
        \item Both $L$ and $L'$ correspond to  $[3,3,3,3]$, $[4,4,4,4]$, or $[6,6,6,6]$ tilings.
    \end{itemize}
\end{theorem}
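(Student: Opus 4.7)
The plan is to separate the problem along the arithmetic/non-arithmetic divide supplied by Theorem~\ref{Thm:MainArithmeticity}, and use a different commensurability invariant in each regime. The unifying observation is that every right-angled tiling link exterior $M(L) = (F \times I) \setminus L$ (or its double $DM(L)$ when $F$ is hyperbolic) is a finite-sheeted cover of a Coxeter orbifold $\orbP_{m,n}$ whose fundamental domain is the right-angled Coxeter polyhedron $P_{m,n}$ cut out by the checkerboard decomposition. This already settles one direction: if $L$ and $L'$ correspond to the same $[m,n,m,n]$ tiling, then $\orbP_{m,n}$ is a common orbifold quotient, so the two exteriors are commensurable.

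For the second ``if'' bullet I would show that $\orbP_{3,3}$, $\orbP_{4,4}$, and $\orbP_{6,6}$ are pairwise commensurable. Because all three are cusped and arithmetic, the invariant trace field is a complete commensurability invariant (the invariant quaternion algebra is forced to split by the parabolic elements). I would compute the Gram matrix of each $P_{m,n}$, extract the invariant trace field using standard Vinberg-style formulas, and verify that all three equal $\Q(i)$, placing the three tilings in a single arithmetic commensurability class. The same computation addresses the arithmetic half of the ``only if'' direction: one checks that $[4,3,4,3]$, $[6,3,6,3]$, and $[6,4,6,4]$ give three further imaginary quadratic invariant trace fields, pairwise distinct and distinct from $\Q(i)$, so each is alone in its commensurability class.

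The non-arithmetic cases are handled by Margulis' theorem, which guarantees that a non-arithmetic commensurability class contains a unique minimal orbifold, realized as the quotient of the canonical Epstein--Penner decomposition by its full combinatorial symmetry group. I would (i) show that the checkerboard polyhedral decomposition of $M(L)$ (or $DM(L)$) agrees with the canonical Epstein--Penner decomposition, (ii) compute its full combinatorial symmetry group, and (iii) identify the quotient by that group with $\orbP_{m,n}$. Since distinct $[m,n,m,n]$ data produce combinatorially distinct right-angled Coxeter polyhedra, the resulting minimal orbifolds are pairwise non-isometric, so non-arithmetic tilings of different $[m,n,m,n]$ type yield non-commensurable exteriors, and combined with the arithmetic analysis this finishes the theorem.

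The main obstacle lies in steps (i)--(iii). Certifying that the checkerboard decomposition really is the Epstein--Penner canonical one requires a convex-hull computation in Lorentzian space, balancing horoball sizes simultaneously over all cusps and verifying convexity of the cell lifts. Equally delicate is ruling out hidden isometries of the canonical decomposition beyond those generated by the visible reflection symmetries; without this one only concludes that the minimal orbifold covers $\orbP_{m,n}$, not that it equals it. I expect these geometric verifications, rather than any trace-field manipulation, to absorb the bulk of the technical work.
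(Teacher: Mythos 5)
Your overall strategy mirrors the paper's: split by arithmeticity, distinguish arithmetic classes by invariant trace fields, and handle non-arithmetic classes via canonical decompositions and Margulis/Goodman--Heard--Hodgson. But there are two genuine gaps. First, your foundational claim that each exterior (or double) covers the reflection orbifold of ``the right-angled Coxeter polyhedron cut out by the checkerboard decomposition'' does not work as stated. When $m \neq n$ the checkerboard/drum decomposition uses two different polyhedra (regular ideal $m$--drums and $n$--drums, or $m$-- and $n$--bipyramids), so there is no single such polyhedron; and the double cannot cover the reflection orbifold of either drum alone, since reflection in a lateral face plane would have to carry an $m$--drum onto an $n$--drum (equivalently, the volume ratio obstruction). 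The correct common quotient is not right-angled at all: it is the Coxeter orbifold $\orbP$ whose fundamental domain is a quarter of an $m$--wedge together with a quarter of an $n$--wedge, and it is obtained by quotienting by the color- and orientation-preserving symmetry group of the universal tiling of $\HH^2$ (using Mostow rigidity to promote tiling symmetries to isometries), followed by silvering and two reflections --- not by cutting along the checkerboard surfaces. Since you also use $P_{m,n}$ for the Gram-matrix/trace-field computations and for identifying the minimal orbifold, this misidentification propagates through the rest of the plan.

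Second, the actual heart of the theorem --- that the drum decomposition of $D\Ext(L)$ (and the two-icosidodecahedron decomposition for $[5,3,5,3]$) is the canonical decomposition for a maximally symmetric choice of cusps --- is only flagged in your proposal, not proved, and the Lorentzian convex-hull route you suggest is substantially harder than necessary. The paper instead pulls back a horospherical neighborhood of the single cusp of $\orbP$ (one-cuspedness is what makes the choice maximally symmetric), and shows by a cut-locus argument that each basin of attraction is bounded by reflection planes of the regular drums, so a shortest geodesic to a horoball can never cross such a plane. Relatedly, your steps (ii)--(iii) (computing the full symmetry group of the lifted tiling and ruling out hidden isometries to pin down the minimal orbifold) are not needed for the commensurability classification: Goodman--Heard--Hodgson only requires comparing the lifted canonical tilings of $\HH^3$, and a tiling by regular ideal $m$-- and $n$--drums (or icosidodecahedra) already determines the tiling type, hidden symmetries or not. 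Identifying the minimal orbifold (and hence your worry about hidden symmetries) is a separate, bonus statement, not an obstacle to the theorem.
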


Theorem~\ref{Thm:MainCommensurability} has various precursors among link complements in $S^3$.
Reid and Walsh found that every hyperbolic $2$--bridge knot complement is the only knot complement in its commensurability class~\cite{RW:Commensurability_2B_Knots}. They conjectured that a single commensurability class can contain at most three hyperbolic knots in $S^3$. Boileau, Boyer, Cebanu, and Walsh proved this conjecture for the class of knots without hidden symmetries \cite{BBCW:Commens}. Millichap and Worden found that only two hyperbolic 2-bridge link complements in $S^3$ share a commensurability class \cite{MW:Hidden_Sym_2B_Commens}. Meyer, Millichap, and Trapp~\cite{MMT:PretzelLinks}, and independently Kellerhals~\cite{Kellerhals:PolyhedralApproach}, determined when fully augmented pretzel link complements in $S^3$ are arithmetic and which of them are commensurable with each other. Moving to links in thickened surfaces, Champanerkar, Kofman, and Purcell studied the commensurability of links in the thickened torus built from regular Euclidean tilings  \cite{CKP:Biperiodic}, and proved (assuming a conjecture of Milnor) that these links fall into infinitely many commensurability classes. We emphasize that Theorem~\ref{Thm:MainCommensurability} is unconditional.

For arithmetic link exteriors, the proof of Theorem~\ref{Thm:MainCommensurability} relies on the number-theoretic data of invariant trace fields (see Definition~\ref{Def:InvariantTraceField}).
For non-arithmetic link exteriors, our primary tool
 is a criterion of Goodman, Heard, and Hodgson \cite{GHH:Commensurators} that can be paraphrased as follows: a pair of cusped, non-arithmetic $3$--manifolds are commensurable if and only if their canonical polyhedral decompositions lift to the same tiling of $\HH^3$. (See Theorem~\ref{Thm:GHH+Margulis} for a precise formulation.) Canonical decompositions are decompositions of cusped hyperbolic manifolds into convex polyhedra, dual to a Ford--Voronoi domain that encodes which cusp neighborhood is closest to a particular point.
See Definition~\ref{Def:CanonicalDecomp} for details.
In dimension $2$, these decompositions of hyperbolic manifolds were introduced in the early 20th century by Voronoi and Delaunay. In dimensions $3$ and higher, they have been extensively studied by many authors, including Epstein and Penner \cite{EP:Canonical}. 

Determining the exact canonical decompositions of an infinite family of hyperbolic manifolds is often difficult. Sakuma and Weeks found the canonical decompositions of certain highly symmetric link complements \cite{SW:Canonical}. Akiyoshi \cite{Akiyoshi:FordDomains}, Lackenby \cite{Lackenby:TorusBundles}, and Gu\'eritaud \cite{G:Thesis} have independently determined the canonical decompositions of punctured torus bundles. Akiyoshi, Sakuma, Wada, and Yamashita \cite{ASWY:FordDomains}, and independently Gu\'eritaud  \cite{G:Thesis}, have found these decompositions 
for the complements of $2$--bridge links. Gu\'eritaud and Schleimer found the canonical decompositions of long Dehn fillings of the Whitehead link \cite{GS:Canonical}, and Ham \cite{Ham:Canonical} extended the method to fillings of the Borromean rings. See also Ushijima \cite{Ushijima:Canonical} for a result in the context of manifolds with totally geodesic boundary. 
We extend the literature with the following result, whose proof forms the heart of Theorem~\ref{Thm:MainCommensurability}:

\begin{theorem}\label{Thm:MainCanonical}
    Let $L \subset F \times I$ be a right-angled tiling link with an alternating projection to a surface $F$ of genus $g \geq 2$. Let $DM$ be the double of the link exterior $M= (F \times I) \setminus L$ along its totally geodesic boundary. Then there is an equivariant choice of cusp neighborhoods in $DM$, such that the canonical polyhedral decomposition of $DM$ consists of regular ideal drums that correspond to the polygons in the tiling of $F$.
\end{theorem}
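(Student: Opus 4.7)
The plan is to identify an explicit ideal polyhedral decomposition of $DM$ consisting of drums, one per tile of $F$, choose an equivariant family of horoball neighborhoods using the large symmetry of $DM$, and then verify that this decomposition is the canonical one via the tilt criterion of Sakuma--Weeks. The crucial simplification is that the symmetry group of $DM$ acts transitively on each type of $2$-face in the drum decomposition, so only one tilt computation per face type is required.

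First, I would produce the drum decomposition geometrically. By Theorem~\ref{Thm:RGCR}, $M$ admits a natural decomposition along its two totally geodesic checkerboard surfaces into right-angled ideal polyhedra indexed by the tiles of $F$. Doubling $M$ along its totally geodesic surface boundary $\bdy M$, the cells sharing a common tile $P$ glue into a single ideal polyhedron $D_P \subset DM$. The dihedral symmetry of $P$ together with the doubling involution of $DM$ make each $D_P$ a regular ideal drum, in the sense that all its ideal vertices form a single orbit under the stabilizer of $D_P$ inside $\isom(DM)$. I would then exploit symmetry to select an equivariant horoball family: the commensurator of $DM$ contains a reflection group whose fundamental domain is a Coxeter polyhedron, acting transitively on the cusps, so a small invariant horoball at any single cusp propagates to a disjoint, congruent family at every ideal vertex of every drum.

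The final step, and the \textbf{main obstacle}, is verifying that the drum decomposition agrees with the Epstein--Penner canonical decomposition. By the tilt criterion, I need to show that across every interior $2$-face shared by two drums, the sum of tilts from the two sides is non-positive. For each such face there is an isometry of $DM$ exchanging the adjacent cells and preserving the face setwise: reflection across the checkerboard surface containing the face, for a face between two drums $D_P$ and $D_{P'}$; and the doubling involution, for a face shared by a drum in $M$ and its mirror image in $\bar{M}$. Consequently, the two tilts across each face agree, and the tilt sum reduces to twice a single value determined by the hyperbolic geometry of $D_P$ and the invariant horoball size. I expect this to reduce to an explicit computation of the distance between a horoball and a totally geodesic plane, which the right-angled structure between each drum face and the totally geodesic surface containing it should force to be non-positive (and negative on those faces that must be retained), certifying the drum decomposition as canonical.
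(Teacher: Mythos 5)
Your overall strategy (exhibit the drum decomposition, choose horoballs equivariantly via the highly symmetric quotient, then certify canonicity) is reasonable, but as written it has both a structural error and a genuine gap at the decisive step. First, the structure: cutting $M$ along the two checkerboard surfaces does \emph{not} produce one cell per tile. In the right-angled structure the checkerboard surfaces are exactly the unions of the horizontal midsections of the Adams--Calderon--Mayer bipyramids (one ideal $m$-gon or $n$-gon per tile), so cutting along them only separates each bipyramid into its two pyramids while leaving all the ``vertical'' faces uncut; to obtain drums one must also cut along those lateral faces, which lie on neither checkerboard surface. Moreover the drums of $DM$ arise by doubling each half-bipyramid across its truncation face, so every drum \emph{straddles} the doubled boundary surface; there are no $2$-faces of the decomposition lying in $\bdy M$, and hence no faces ``shared by a drum in $M$ and its mirror image.'' The actual interior faces are of two kinds: base $m$-gons or $n$-gons lying on a checkerboard surface, shared by two drums of the same type, and lateral ideal quadrilaterals shared by an $m$-drum and an $n$-drum.

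This mischaracterization undermines the key reduction in your tilt argument. When $m \neq n$, the two cells adjacent to a lateral face are a regular ideal $m$-drum and a regular ideal $n$-drum, which are not isometric, so no isometry of $DM$ (or even of $\HH^3$) can exchange them while preserving the face; the tilt sum there is \emph{not} twice a single tilt, and the proposed ``reflection across the checkerboard surface containing the face'' does not furnish such a symmetry. You would therefore have to compute tilt sums honestly for both face types, with horoball sizes coupled across the two drum shapes by the equivariant choice, and you would need \emph{strict} negativity at every face: non-positive tilt sums only show that the drums refine the canonical cells, while zero sums would force merging. None of this is carried out -- ``the right-angled structure should force non-positivity'' is the entire content of the step you yourself flag as the main obstacle. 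For contrast, the paper avoids tilts altogether: it pulls back cusp neighborhoods from the one-cusped Coxeter orbifold $\orbP$ of Lemma~\ref{Lemma:PolyhedralQuotient}, subdivides each drum into basins of attraction bounded by the drum's planes of reflective symmetry (these reflections preserve the horoball family, since each drum covers a quarter-wedge of $\orbP$), and shows a shortest geodesic from any point to the horoball family cannot cross such a plane, because reflecting the terminal segment would produce an equal-length broken path to another horoball; hence the cut locus is exactly the union of these symmetry walls and its dual is the drum decomposition. If you prefer the tilt route, transitivity of the symmetry group on each face type does reduce the work to two explicit computations (one base type, one lateral type), but those computations, with the asymmetry at mixed faces when $m \neq n$, must actually be done.
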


See Theorem~\ref{Thm:Canonical} for an expanded  statement, which provides more information for commensurability. 
We prove Theorem~\ref{Thm:MainCanonical} by again exploiting the high degree of symmetry in the link exterior, including the fact that $DM$ covers the Coxeter polyhedral orbifold $\orbP$ depicted in Figure~\ref{Fig:DMReflectionQuotient}. Along the way, we are able to show the satisfying result that $\orbP$ is almost always the minimal orbifold in the commensurability class of $DM$. See Corollary~\ref{Cor:MinimalOrbifold} for details.

The analogues of Theorem~\ref{Thm:MainCanonical} for tiling links on $S^2$ and $T^2$ also hold. See Proposition~\ref{Prop:S^3Arithmeticity} and Remark~\ref{Rem:S^3Canonical} for results on $S^2$. For links in a thickened torus, we are able to prove the following result, which expands the scope beyond right-angled links.

\begin{theorem}\label{Thm:CanonicalOnTorus}
    Let $L$ be a link corresponding to a $4$--valent tiling of $T^2$ by regular polygons. Then there is a natural choice of cusp neighborhoods in $M= (T^2\times I) \setminus L$ such that the canonical polyhedral decomposition of $M$ consists of regular ideal tetrahedra and regular ideal octahedra.
\end{theorem}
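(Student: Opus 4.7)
The plan is a finite case analysis using the classification of $4$--valent tilings of the Euclidean plane by regular polygons. Up to similarity there are exactly three such tilings: the square tiling $[4,4,4,4]$, the trihexagonal tiling $[3,6,3,6]$, and the rhombitrihexagonal tiling $[3,4,6,4]$. The first two are right-angled tiling links, and for these the argument parallels the proof of Theorem~\ref{Thm:MainCanonical}: one cuts along the two checkerboard surfaces to obtain a drum decomposition, identifies each drum (after a possible subdivision) with a regular ideal polyhedron --- octahedra for $[4,4,4,4]$, and tetrahedra together with octahedra for $[3,6,3,6]$ after a $6$--fold subdivision of the hexagonal drum --- chooses horoball neighborhoods equivariantly, and verifies canonicity via the tilt criterion. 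The arguments from the proof of Theorem~\ref{Thm:MainCanonical} transfer with only the cosmetic change of replacing a totally geodesic boundary with a Euclidean cross-section.

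The genuinely new case is the rhombitrihexagonal tiling $[3,4,6,4]$, which is not of the form $[m,n,m,n]$ and hence is not a right-angled tiling link. For this case I would first cut $M = (T^2 \times I) \setminus L$ along its two checkerboard surfaces, which separate the squares from the triangles and hexagons, producing one triangular drum, one square drum, and one hexagonal drum per tile. Each drum is then refined into regular ideal pieces: the triangular drum is a regular ideal tetrahedron, the square drum is a regular ideal octahedron, and the hexagonal drum decomposes via its $6$--fold symmetry into a central regular ideal octahedron flanked by six regular ideal tetrahedra.

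Next, I would select horoball cusp neighborhoods equivariantly under the full symmetry group of $L$, generated by the wallpaper symmetries of the tiling together with the top--bottom involution of $T^2 \times I$. Since all cusps lie in a single orbit under this group, equivariance determines the horoball sizes up to one global scaling. I would then apply the Epstein--Penner convex hull construction, equivalently Weeks's tilt formula, and verify that every interior $2$--face of the proposed decomposition has zero total tilt. Equivariance reduces the verification to finitely many face orbits, each of which can be checked by a short explicit computation using the cusp geometry of regular ideal tetrahedra and octahedra in the upper half-space model.

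The main obstacle is confirming the tilt condition in the $[3,4,6,4]$ case: one must verify that a single global scaling simultaneously zeroes the tilts at the triangle--square interfaces, the hexagon--square interfaces, and the new internal faces introduced by subdividing the hexagonal drum. This balance is controlled by the relative cusp cross-section edge-lengths of regular ideal tetrahedra and octahedra, which are commensurable after the natural scaling, so that the tiling symmetry then forces all tilts to vanish simultaneously.
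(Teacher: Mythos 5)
Your strategy --- reduce to a finite list of tilings, subdivide into regular ideal pieces, choose horoballs by symmetry, and check an Epstein--Penner/tilt condition --- has genuine gaps. First, the classification step is incorrect: the theorem concerns arbitrary $4$--valent tilings of $T^2$ by regular polygons, not only the vertex-transitive ones, and beyond $[4,4,4,4]$, $[3,6,3,6]$, $[3,4,6,4]$ there are $4$--valent tilings with more than one vertex type (for instance $2$--uniform tilings mixing $[3,6,3,6]$ and $[3,3,6,6]$ vertices), so a three-case analysis does not cover the statement. The paper's argument is uniform: every tile in such a tiling is a triangle, square, or hexagon, and $\Ext(L)$ decomposes into ideal \emph{bipyramids}, one per tile, where a square bipyramid is a regular ideal octahedron, a triangular bipyramid is two regular ideal tetrahedra, and a hexagonal bipyramid is six regular ideal tetrahedra glued around the stellating edge. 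Your description of the pieces is also off: on the torus the apexes are ideal and the pieces are bipyramids, not drums (drums arise from doubling along totally geodesic boundary in genus $g \geq 2$); the hexagonal piece contains no ``central octahedron'' flanked by tetrahedra; and for $[3,4,6,4]$ the checkerboard surfaces are not totally geodesic (Theorem~\ref{Thm:RGCR}), so cutting along them does not produce geometric pieces at all.

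Second, the cusp-neighborhood step fails as stated: the cusps of $M$ do not form a single orbit of the symmetry group --- there are the link-component cusps and the two cusps coming from $T^2 \times \{\pm 1\}$ --- so equivariance does not determine the horoballs up to one global scale, and the relative sizes are precisely the data the theorem's ``natural choice'' must specify (this is the point of the remark following the paper's proof, which notes the quotient orbifold typically has at least two cusps). The paper makes the choice explicit: expand the horoball at each ideal vertex until it reaches the midpoints of the incident edges; since all faces of the bipyramids are ideal triangles glued midpoint to midpoint, this choice is consistent across faces and $\pi_1(M)$--equivariant. Finally, canonicity is established there not by a tilt computation (which you leave as an unperformed ``short explicit computation'' --- this is the actual content of the proof, and without the correct relative cusp sizes it is not even well posed) but by the same basin-of-attraction argument as in Theorem~\ref{Thm:Canonical}: each regular ideal tetrahedron or octahedron is cut by its planes of reflective symmetry into basins, and a shortest geodesic from a point to its nearest horoball that crossed such a plane could be reflected to yield a strictly shorter path, a contradiction; hence the cut locus is the union of the symmetry-plane walls and its dual is exactly the tetrahedron/octahedron decomposition.
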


\noindent The proof of Theorem~\ref{Thm:CanonicalOnTorus} follows the same ideas as that of Theorem~\ref{Thm:Canonical}, again exploiting the symmetry of tiling link exteriors in a major way.

\subsection{Organization}
We begin in Section~\ref{Sec:LinksInSurface} by providing background on tiling links in thickened surfaces, the geometry of their exteriors, and the  right-angled structure of these link exteriors.
In Section~\ref{Sec:Orbifolds}, we review background on orbifolds, commensurability, and arithmeticity. We specifically review Coxeter polyhedra and the associated Gram matrices, which provide a way to decide whether a reflection orbifold is arithmetic and compute its arithmetic invariants. Sections~\ref{Sec:LinksInSurface} and~\ref{Sec:Orbifolds} do not contain any original results, but the background that they provide is crucial.

In Sections~\ref{Sec:Quotient orbifolds and arithmeticity} and~\ref{Sec:Canonical}, we focus on links in $F \times I$
 where $F$ is a surface of genus $g \geq 2$. In Section~\ref{Sec:Quotient orbifolds and arithmeticity}, we prove that the doubled exterior of such a link covers a Coxeter orbifold $\orbP$. Using Gram matrices, we then show 
that precisely two hyperbolic tilings, namely $[6,4,6,4]$ and $[6,6,6,6]$, correspond to links whose exteriors are arithmetic. We also prove that links corresponding to the same $[m,n,m,n]$ tiling have commensurable exteriors, establishing one direction of Theorem \ref{Thm:MainCommensurability}.
In Section~\ref{Sec:Canonical}, we find the canonical decomposition of the doubled link exterior, establishing Theorem~\ref{Thm:MainCanonical}. We use this to prove that commensurable link exteriors must correspond to the same $[m,n,m,n]$ tiling. This completes the proof of Theorem~\ref{Thm:MainCommensurability} for tiling links on surfaces of genus $g \geq 2$.

In Section~\ref{Sec:Lower-genus}, we consider right-angled tiling links corresponding to tilings of $T^2$ and $S^2$.
We prove Theorem~\ref{Thm:CanonicalOnTorus}, describing the canonical decompositions of links in $T^2 \times I$. We 
complete the proof of Theorem~\ref{Thm:MainArithmeticity}, and finish sorting all the right-angled tiling links into their commensurability classes, establishing Theorem~\ref{Thm:MainCommensurability}.

\subsection{Acknowledgements}

We thank Abhijit Champanerkar for sharing his ideas about canonical decompositions of semi-regular links. 
We thank Matthew Stover for numerous helpful discussions about arithmetic notions and arithmetic invariants. We thank 
Nikolay Bogachev for explaining Vinberg's arithmeticity criterion, and for his help with interpreting Gram matrices. We also thank Nikolay Bogachev and Sami Douba for their comments on an early draft of this paper. Finally, we thank the referee for their thorough feedback, which improved this paper's exposition.

The first author thanks the NSF for its support via grant DMS--2405046.

\section{Links in thickened surfaces and their exteriors}\label{Sec:LinksInSurface}

This section surveys some background material on regular tilings of constant-curvature surfaces, the associated \emph{tiling links}, and the geometry of the link exteriors. 
Throughout this work, $F$ denotes a closed, oriented surface, $L$ is a link in $F \times I$ where $I = [-1,1]$, and $\pi(L)$ denotes the projection diagram of $L$ on $F$.

 \subsection{Alternating links in thickened surfaces}

\begin{defn}\label{Def:RegularTiling}
Let $\mathbb X$ be one of the model spaces $\mathbb S^2$, $\mathbb E^2$, or $\HH^2$, endowed with its constant-curvature metric. An $n$--sided polygon $p \subset \mathbb X$ is called \emph{regular} if the symmetry group $\isom(p)$ is the full dihedral group $D_{2n}$.

 A \emph{tiling} of $\mathbb X$ is a partition of the space into convex polygons 
 with disjoint interiors, called \emph{tiles}. If two tiles intersect on their boundaries, they either share an edge and two vertices, or they share one vertex. A tiling $T$ is called \emph{regular} if all of its polygons are regular polygons. Two tilings, $T$ and $T'$, are equivalent if there exists an isometry $h \from \mathbb{X}\rightarrow \mathbb{X}$ which sends the tiles of $T$ to tiles of $T'$.
\end{defn}

For this work, we restrict our attention to \emph{$4$--valent tilings}, meaning that precisely four tiles meet at every vertex of the tiling. Following Datta and Gupta \cite{DG:Semi-regularTilings}, we denote the \emph{vertex type} of a vertex $v$ by $[a, b, c, d]$, where $a$, $b$, $c$, and $d$ are the numbers of sides of the four polygons meeting at $v$ in clockwise order. A tiling with only one vertex type is called an \emph{$[a, b, c, d]$ tiling}. 

\begin{defn}\label{Def:TilingLink}
Let $\mathbb X$ be one of $\mathbb S^2$, $\mathbb E^2$, or $\HH^2$, and let $\widetilde T$ be a regular $4$--valent tiling of $\mathbb X$. Observe that the tiles of $\widetilde T$ admit a checkerboard coloring: start by shading a single tile $t$, and then declare that a tile $t'$ is shaded if and only if a vertex-avoiding path from $t$ to $t'$ crosses an even number of edges. Now, let $G$ be  the color-preserving and orientation-preserving symmetry group of $T$, and and suppose that $G$ acts cocompactly on $\mathbb X$. If $G_0 < G$ is a torsion-free finite-index subgroup, then $F = \mathbb X/ G_0$ is a compact orientable surface that admits a checkerboard-colored tiling $T = \widetilde T / G$.

Given a checkerboard-colored regular tiling $T$ of a compact orientable surface $F$, we may construct a \emph{tiling link} $L \subset F \times (-1,1)$, as follows. The projection diagram $\pi(L)$ is exactly the $1$--skeleton of $T$. Crossing information is encoded as follows: the strands of $L$ run from undercrossings to overcrossings when we trace the boundary of a white face clockwise, or a shaded face counterclockwise. It follows immediately that $\pi(L)$ is an alternating diagram. We say that $\pi(L)$ \textit{corresponds to the tiling $T$}. 

The surface formed by connecting all shaded faces of the diagram $\pi(L)$ by inserting a half-twisted band at each crossing is called the \emph{shaded checkerboard surface} of $\pi(L)$. Performing the same construction to the white (unshaded) faces leads to the \emph{white checkerboard surface}. 
% The arcs where the two checkerboard surfaces intersect are called \emph{crossing arcs}. 
\end{defn}

\begin{remark}\label{Rem:OtherTilingLinks}
Our Definition~\ref{Def:TilingLink} of a tiling link is slightly more restrictive than that of Adams, Calderon, and Mayer~\cite[Construction 2.4]{ACM:K-uniform}. In the special case where $F$ is a Euclidean torus, our definition is also slightly more restrictive than Champanerkar, Kofman, and Purcell's notion of a \emph{semi-regular link} \cite[Definition 3.1]{CKP:Biperiodic}. The primary difference is that these authors also permit regular tilings with $3$--valent vertices. We exclude $3$--valent vertices from our definition in order to preserve a closer correspondence between the tilings and the link diagrams.

 Links in the thickened torus corresponding to Euclidean tilings have also been referred to as \emph{textile links} \cite{BMOP:Textile}. 
%    These families of links fall under the class of \emph{cellular weakly generalized alternating links} as defined by Howie and Purcell in \cite{HP:GeneralizedAlternating}. 
\end{remark}

\begin{defn}\label{Def:LinkExterior}
Let $F$ be a closed orientable surface, and let $L \subset F \times I$ be a link embedded in the interior of $F \times I$, where $I = [-1,1]$. The \emph{link exterior} $\Ext(L)$ is defined as follows, depending on the geometric type of $F$:
\begin{itemize}
\item If $\chi(F) > 0$, then $F = S^2$. Collapsing $F \times \{1\}$ to a point and $F \times \{-1\}$ to a point produces $S^3$. We define $\Ext(L)$ to be the complement of the resulting link in $S^3$.
\item If $\chi(F) = 0$, then $F = T^2$. We remove the boundary surfaces $F \times \{\pm 1\}$, and define $\Ext(L) = (F \times (-1, 1)) \setminus L$.
\item If $\chi(F) < 0$, then $F$ is a hyperbolic surface. We define $\Ext(L) = (F \times [-1, 1]) \setminus L$.
\end{itemize}
Note that in all cases, $\Ext(L)$ is a noncompact $3$--manifold, whose noncompact ends are homeomorphic to $T^2 \times [0, \infty)$.
\end{defn}

We are specifically interested in hyperbolic structures on link exteriors.

\begin{defn}\label{Def:HyperbolicLink}
    Let $F$ be a closed orientable surface, and let $L$ be a link embedded in the interior of $F \times [-1, 1]$. A link exterior $\Ext(L)$ is called \emph{hyperbolic} if it admits a complete hyperbolic metric, such that $\bdy \Ext(L)$ is either empty or totally geodesic. By Mostow--Prasad rigidity, such a metric is unique up to isometry. We say that $L$ is a \emph{hyperbolic link} if $\Ext(L)$ is hyperbolic.
\end{defn}

It is known that every tiling link in $F \times I$ has hyperbolic exterior. This follows from the work of 
Menasco~\cite{Menasco:Incompressible} if $F = S^2$, 
and from the work of Adams, Albors-Rivera, Haddock, Li, Nishida, and Wang~\cite[Theorems 1 and 2]{Adams_etal:Hyperbolicity} if $F$ is a surface of genus $g \geq 1$. Howie and Purcell showed that a broader class of generalized alternating link complements are hyperbolic \cite[Theorem 4.2]{HP:GeneralizedAlternating}. All of the above proofs rely on Thurston's hyperbolization theorem for Haken $3$--manifolds. The paper of Adams, Calderon, and Mayer~\cite{ACM:K-uniform} contains a direct construction of the hyperbolic metrics on all tiling link exteriors via generalized bipyramids (Definition~\ref{Def:Bipyramid}). For tiling link exteriors in $T^2 \times I$, the paper of Champanerkar, Kofman, and Purcell~\cite{CKP:Biperiodic} also contains a direct construction of hyperbolic metrics using ideal triangulations.

\begin{defn}\label{Def:RightAngledLink}
A tiling link $L \subset F \times I$ corresponding to a tiling $T$ is called \emph{right-angled} if $T$ has only one vertex type, with vertex pattern $[m,n,m,n]$. The numbers $m$ and $n$ might coincide.
\end{defn}

\begin{remark}\label{Rem:Uniformization}
If $F$ is a compact orientable surface, without a specified metric, and $T$ is a tiling of $F$ by topological polygons such that every vertex has type $[m,n,m,n]$, then there is a spherical, Euclidean, or hyperbolic metric on $F$ that makes $T$ into a regular tiling. See Edmunds, Ewing, and Kulkarni~\cite{EEK:Tessellations} and Datta and Gupta~\cite{DG:Semi-regularTilings}. Thus, in constructing a right-angled tiling link, it is not necessary to start with a \emph{regular} tiling.
\end{remark}

While Definition~\ref{Def:RightAngledLink} makes no reference to angles, the term \emph{right-angled} is motivated by $3$--dimensional hyperbolic geometry. By Theorem~\ref{Thm:RGCR}, a tiling link is right-angled in the sense of Definition~\ref{Def:RightAngledLink} if and only if $\Ext(L)$ contains two totally geodesic surfaces that meet at right angles.

\subsection{Polyhedral decompositions of link exteriors}
The process of using alternating diagrams in $S^2$ to build polyhedral decompositions of link complements in $S^3$ began in the work of Thurston~\cite[Chapter 3]{Thurston:Notes} and Menasco~\cite{Menasco:Polyhedra}. This work has been extended to link exteriors in thickened surfaces by Adams, Calderon, and Mayer~\cite{ACM:K-uniform}, Champanerkar, Kofman, and Purcell~\cite{CKP:Biperiodic}, and Howie and Purcell~\cite{HP:GeneralizedAlternating}.

\begin{defn}\label{Def:GeneralizedPolyhedron}
A \emph{(combinatorial) polyhedron} is a $3$--ball $P$, endowed with a cellular graph $\Gamma$ on its boundary. Each vertex of $\Gamma$ is declared \emph{finite}, \emph{ideal}, or \emph{ultra-ideal}.
For an ideal vertex $v \in \Gamma$, we remove $v$ from $P$. For an ultra-ideal vertex of $\Gamma$, we remove a regular neighborhood of $v$ from $P$, effectively truncating $v$.
\end{defn}

To work with hyperbolic structures on link exteriors, we endow the polyhedra themselves with geometric shapes.

\begin{figure}
\centering   
\begin{overpic}[abs,abs,unit=1mm,scale=.5, trim={0 4.75in 0 0}, clip]{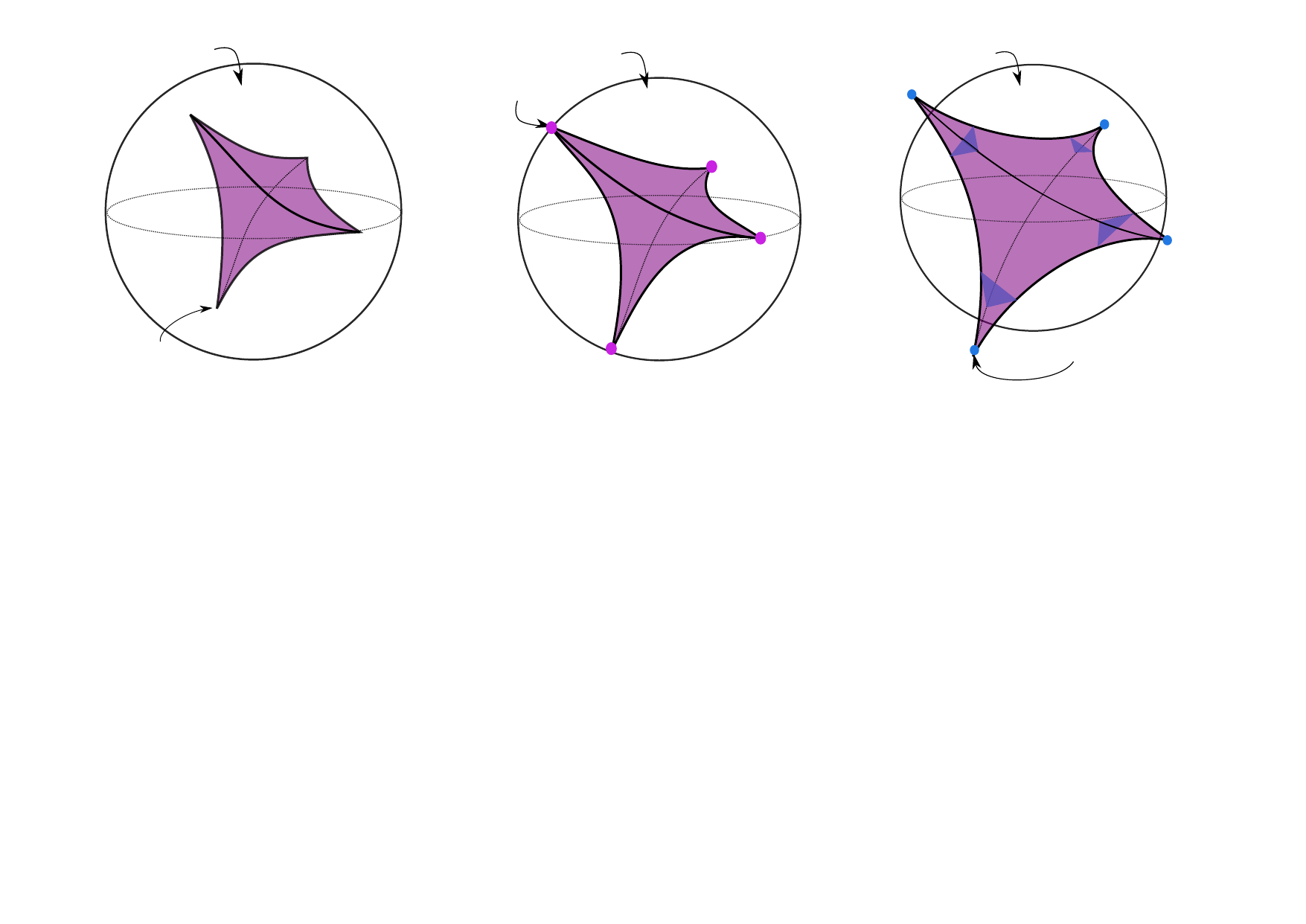}
    \put(18,38){$\partial \HH^3$}
     \put(64,38){$\partial \HH^3$}
      \put(107,38){$\partial \HH^3$}
      \put(123,4){Ultra-ideal}
      \put(14,3){Finite}
      \put(55, 35){Ideal}    
    \end{overpic}
    \caption{Hyperbolic tetrahedra with all finite, ideal, or ultra-ideal vertices. The truncation faces of ultra-ideal vertices are shown in dark purple.}
    \label{TypesofTetinH3}
\end{figure}

\begin{defn}
Let $P$ be a combinatorial polyhedron, whose vertices have been labeled as finite, ideal, or hyper-ideal. 
We say that $P$ is \emph{hyperbolic} if it admits a convex hyperbolic structure with totally geodesic faces, so that finite vertices lie within $\HH^3$, ideal vertices lie on $\bdy \HH^3$, and ultra-ideal vertices lie outside  $\bdy \HH^3$. See Figure \ref{TypesofTetinH3}.
\end{defn}

The vertices of a hyperbolic polyhedron are characterized by their links. The link of a finite vertex $v$ is a spherical polygon lying on an $\epsilon$--sphere about $v$. The link of an ideal vertex $v$ is a Euclidean polygon lying on a horosphere orthogonal to all the edges into $v$. The link of an ultra-ideal vertex is a hyperbolic polygon lying on a hyperbolic plane orthogonal to all the edges into $v$. 
This polygon is called a \emph{truncation face}.

\begin{defn}\label{Def:GeometricDecomp}
Let $F$ be a closed orientable surface, and let $L$ be a hyperbolic link in $F \times [-1, 1]$. A \emph{geometric decomposition} of $\Ext(L)$ is a decomposition into hyperbolic polyhedra $P_1, \ldots, P_n$, such that the $P_i$ are glued by isometries along their faces to form the complete hyperbolic structure on $\Ext(L)$.
\end{defn}

\begin{defn}\label{Def:Bipyramid}
A polyhedron $P \subset \HH^3$ with vertices $v_0, \ldots, v_n$ is called an \emph{$n$--pyramid} if vertex $v_0$ (called the \emph{apex}) is joined by an edge to every other $v_i$, and there is a single face (called the \emph{base}) with vertices $v_1, \ldots, v_n$. The pyramid $P$ is called \emph{regular} if the symmetry group $\isom(P)$ acts on the base as the full dihedral group $D_{2n}$.
We require $v_1, \ldots, v_n$ to be ideal, while the apex $v_0$ may be finite, ideal, or ultra-ideal. When $v_0$ is an ultra-ideal vertex, we emphasize this fact by calling $P$ a \emph{generalized regular pyramid}.

A polyhedron $\Delta \subset \HH^3$ is called a \emph{regular $n$--bipyramid} if $\Delta$ is formed by gluing two regular $n$--pyramids by an isometry of their bases.  The quotient of the two bases is a regular ideal $n$-gon, called the \emph{horizontal midsection} of $\Delta$.
The bipyramid $\Delta$ has two apexes (which may be finite, ideal, or ultra-ideal). The faces of $\Delta$ are called \emph{vertical faces}, for contrast with the horizontal midsection. By construction, a regular $n$--bipyramid is invariant by a rotation of order $n$ about an axis connecting the two apexes; this axis is called the \emph{stellating edge} of $\Delta$. See Figure \ref{Fig:Wedge}.
\end{defn}

\begin{figure}[h]
    \centering
    \begin{overpic}[abs,unit=1mm,scale=.8, trim={0 6.25in 4in 0}, clip]{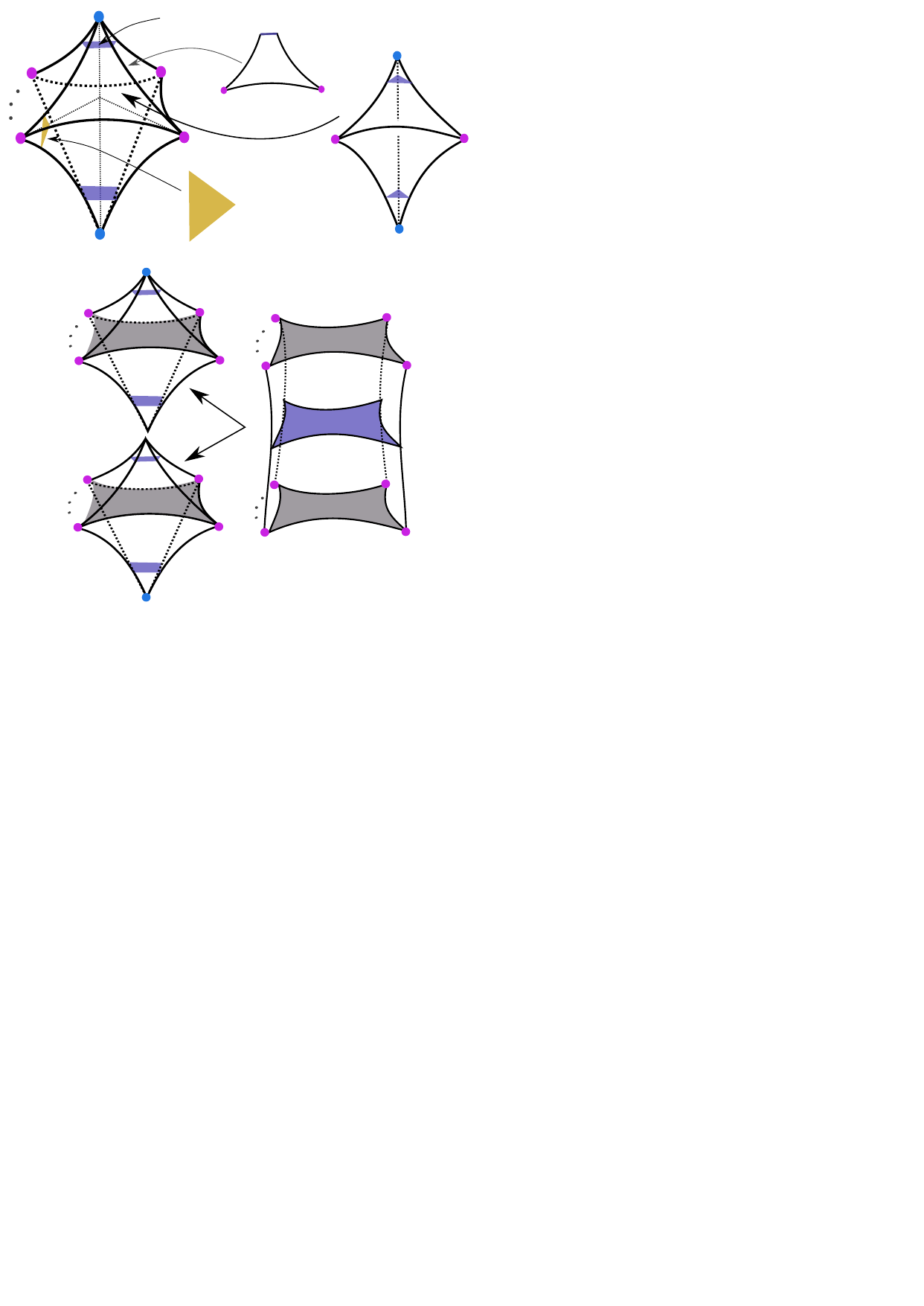}
    \put(27,109){Truncation face}
    \put(41,91){Vertical face}
    \put(35,80){$\alpha_n/2$}
     \put(35,64){$\alpha_n/2$}
     \put(43,72){$\pi-\alpha_n$}
     \put(56,11){Drum}
     \put(68,65){Wedge}
    \end{overpic}
    \caption{Above: A generalized bipyramid $\Delta$ built on a regular polygon with interior angle $\alpha_n$ and $n$ sides. Truncation faces for a neighborhood of the apexes of this bipyramid are shown in dark blue. Half of a Euclidean truncation face for one of the ideal vertices of the midsection is shown in yellow. The bipyramid $\Delta$ decomposes into $n$ tetrahedra, called \emph{wedges}, glued along the stellating edge. Below: A single $n$-drum constructed from two half-bipyramids and thus from $2n$ half-wedges.}
    \label{Fig:Wedge}
\end{figure}

Adams, Calderon, and Mayer proved that if $L \subset F \times I$ is a 
tiling link, the link exterior $\Ext(L)$  admits a geometric decomposition into regular hyperbolic bipyramids, with one $n$--bipyramid for every $n$-gon in the tiling \cite[Theorem 4.4]{ACM:K-uniform}. (In their terminology, \emph{regular} bipyramids are called \emph{symmetric}.) The apexes of each bipyramid are finite when $\chi(F)>0$, ideal when $\chi(F)=0$, and ultra-ideal when $\chi(F) < 0$. The horizontal midsection of each $n$--bipyramid corresponds to an $n$-gon face in the checkerboard coloring of the projection $\pi(L)$, and to an $n$-gon in the tiling of $F$. See \cite[Figure 11]{ACM:K-uniform} for an illustration of the gluing pattern.
When $F = T^2$, this result is also a special case of a theorem of Champanerkar, Kofman, and Purcell \cite[Theorem 3.5]{CKP:Biperiodic}.

When $F$ is a hyperbolic surface, it will be convenient to double $\Ext(L)$ along its nonempty, totally geodesic boundary. If $M = \Ext(L)$, the doubled manifold $DM$ is a finite-volume hyperbolic manifold with cusps. The doubled link exterior $DM$ can be decomposed into drums.

\begin{defn}\label{Def:Drum}
An \emph{$n$-drum} is a polyhedron $P$ with the combinatorics of $p \times [0,1]$, where $p$ is an $n$--gon. The faces $p \times \{0\}$ and $p \times \{1\}$ are called \emph{bases}, and the remaining $n$ faces are called \emph{lateral faces}. A \emph{regular ideal drum} is a hyperbolic drum $P \subset \HH^3$, with totally geodesic faces, all of whose $2n$ vertices are ideal,
which possesses all of the symmetries of a regular Euclidean prism $p \times [0,1]$.
% and such that the subgroup $G < \isom(P)$ that sends bases to bases has order $4n$.  That is, $\isom(P)$ contains a $D_{2n}$ subgroup acting dihedrally on each base and also contains a reflection interchanging the two bases.
(In the special case where $n = 4$, an $n$--drum $P$ is combinatorially a cube. 
We do not require a regular ideal $4$--drum to have symmetries that send a base to one of the lateral faces.)
\end{defn}

Decompositions of link complements into regular ideal drums appear in the work of Thurston~\cite[Section 6.8]{Thurston:Notes} and of Sakuma and Weeks~\cite{SW:Canonical}. Examples are also shown in \cite[Chapter 5]{RKK:Thesis}. In our setting, we have:

\begin{lemma}\label{Lem:Drums}
Let $F$ be a closed surface of genus $g \geq 2$, and let $L$ be a $k$--regular tiling link in $F \times I$.
Then the double of $\Ext(L)$ admits a geometric decomposition into regular ideal drums.
\end{lemma}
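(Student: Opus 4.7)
The plan is to start from the Adams--Calderon--Mayer bipyramid decomposition of $M = \Ext(L)$ and reorganize it under doubling. Since $\chi(F) < 0$, every bipyramid $\Delta$ in that decomposition has both apexes ultra-ideal: the truncation $n$-gons cap $\Delta$ on the two components of $\bdy M$, while the horizontal midsection is a totally geodesic regular ideal $n$-gon sitting in the interior of $M$, running through one of the checkerboard surfaces of $L$.

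My first step is to split each bipyramid along its midsection into a pair of regular pyramids $P_+, P_-$, each with a single truncated ultra-ideal apex. Every $P_\pm$ has an ideal regular $n$-gon base (the midsection), a truncation face $T_\pm \subset \bdy M$, and $n$ truncated-triangle vertical faces, each a quadrilateral with two ideal vertices on the midsection and two finite vertices on $T_\pm$. In the double $DM = M \cup_{\bdy M} M'$, the pyramid $P_\pm$ is reflected across $T_\pm$, and I take $D_\pm := P_\pm \cup_{T_\pm} P_\pm'$ as a candidate piece of $DM$.

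The key geometric observation is that $T_\pm$ is the polar hyperplane to the ultra-ideal apex, hence orthogonal to every geodesic through it. In particular $T_\pm$ is orthogonal to the plane carrying each vertical face of $P_\pm$, so reflection across $T_\pm$ preserves that plane. Each truncated triangle of $P_\pm$ therefore lies in the same totally geodesic plane as its mirror; the two finite vertices on $T_\pm$ become interior points of straight geodesic edges connecting an ideal vertex of $P_\pm$ to its reflection in $P_\pm'$; and the two truncated triangles merge into a single ideal quadrilateral face. Counting faces and vertices, $D_\pm$ has two ideal $n$-gon bases, $n$ ideal quadrilateral lateral faces, and exactly $2n$ ideal vertices, so combinatorially $D_\pm$ is an $n$-drum. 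The regular dihedral symmetry of $P_\pm$ together with the new reflective symmetry across $T_\pm$ supply the full prism symmetry group, so $D_\pm$ is a regular ideal drum.

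Finally the drums will assemble to tile $DM$: the two drums $D_+, D_-$ associated with a single $n$-gon are glued along both bases (via the midsection of $\Delta$ inside $M$ and its mirror inside $M'$), while drums from adjacent polygons of the tiling are glued along lateral faces inherited from the face identifications of the bipyramid decomposition of $M$. All identifications are by hyperbolic isometries, and the pieces fill $DM$ because the bipyramids fill $M$. I expect the main obstacle to be the orthogonality-and-merging step above: one must verify carefully that the $n$ finite vertices on each $T_\pm$ do not survive as genuine vertices of $D_\pm$, which is precisely what upgrades the naive ``truncated pyramid plus its mirror'' to an honest regular ideal drum.
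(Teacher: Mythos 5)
Your proposal is correct and follows essentially the same route as the paper's proof: cut each Adams--Calderon--Mayer bipyramid along its horizontal midsection, double each generalized pyramid across its truncation face, and note that the resulting piece is a regular ideal drum whose regularity comes from the pyramid's dihedral symmetry together with the doubling reflection. Your extra verification that the truncation face is orthogonal to the vertical faces (so the truncated triangles merge into ideal quadrilaterals and the finite vertices disappear) is a detail the paper leaves implicit, but it is the same argument.
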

     
   \begin{proof}
By Adams, Calderon, and Mayer~\cite[Theorem 4.4]{ACM:K-uniform}, $\Ext(L)$ has a decomposition into regular hyperbolic (generalized) bipyramids. The truncation faces of the bipyramids tile the totally geodesic boundary of $\Ext(L)$. If we cut each bipyramid $\Delta$ along its horizontal midsection into two regular pyramids, the double of each regular (generalized) pyramid becomes an ideal drum $P$.        The result is a collection of ideal drums with faces glued according to the gluing pattern of the original bipyramids. 
Note that every drum $P$ is regular because the symmetry group of each generalized pyramid is already maximal, and the symmetry group of $P$ also interchanges the two constituent generalized pyramids.
   \end{proof}

Observe that each regular ideal $n$--drum $P$ from Lemma~\ref{Lem:Drums} is built from reassembled pieces of $n$ wedges because each $n$-drum consists of half of two $n$-bipyramids. See Figure~\ref{Fig:Wedge}. Since the base-preserving symmetry group $G < \isom(P)$ has order $4n$ by Definition~\ref{Def:Drum}, the quotient $P/G$ is the quotient of a wedge by two reflections.

\subsection{Right-angled structure on link exteriors in thickened surfaces}
The following result, due to Gan~\cite{Gan:Alternating} when $F = S^2$ and to Kaplan-Kelly~\cite[Theorem 6]{RKK:RightAngled} when $F$ has genus $g \geq 1$, explains the geometric meaning behind the term \emph{right-angled tiling link}.

\begin{theorem}% (\cite[Theorem 6]{RKK:RightAngled}).
\label{Thm:RGCR}
Let $F$ be a closed surface of genus $g \geq 0$, and let $L \subset F \times I$ be a tiling link with projection diagram $\pi(L)$.
Then the following are equivalent:
    
    \begin{enumerate}
        \item $L$ is a right-angled tiling link, as in Definition~\ref{Def:RightAngledLink}.
        \item Each checkerboard surface of $\pi(L)$ has exactly one type of polygon.
        \item The checkerboard surfaces of $\pi(L)$ are totally geodesic.
        \item\label{Itm:RGCR} 
The checkerboard surfaces of $\pi(L)$ are totally geodesic, and meet at right angles.
    \end{enumerate}
\end{theorem}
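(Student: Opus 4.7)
The plan is to establish the cycle $(1)\Leftrightarrow(2)\Rightarrow(4)\Rightarrow(3)\Rightarrow(1)$. The starting equivalence $(1)\Leftrightarrow(2)$ is purely combinatorial: in any $4$-valent tiling, the checkerboard coloring places the four incident faces at each vertex in the cyclic pattern shaded-white-shaded-white, so opposite faces at each vertex share a color. A uniform vertex type $[m,n,m,n]$ therefore forces the two shaded faces at every vertex to have $m$ sides and the two white faces to have $n$ sides; connectedness of $F$ propagates this into $(2)$, and the converse reads off the vertex type from the face sizes.

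For $(2)\Rightarrow(4)$ I would invoke the bipyramid decomposition of Adams, Calderon, and Mayer, which partitions $\Ext(L)$ into regular ideal (generalized) bipyramids, with one $k$-bipyramid $\Delta_F$ per $k$-gon face $F$ of $\pi(L)$. Under $(2)$, all shaded bipyramids are mutually isometric regular $m$-bipyramids and all white bipyramids are mutually isometric regular $n$-bipyramids. Each regular bipyramid $\Delta$ carries a reflective isometry across its horizontal midsection that swaps the two apexes and fixes the midsection pointwise; because the two dihedral angles between a vertical face and the midsection are interchanged by this reflection and sum to $\pi$, each such dihedral angle equals $\pi/2$. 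Since the shaded (respectively white) midsections are mutually congruent, these local reflective symmetries glue coherently across the ACM face identifications to a global isometric involution of $\Ext(L)$ whose fixed set is the shaded (respectively white) checkerboard surface; as the fixed set of a hyperbolic isometric involution, each checkerboard surface is totally geodesic. The right-angle assertion in $(4)$ then follows immediately from the dihedral-angle computation, since the opposite-color checkerboard surface runs through the vertical faces and thus meets the midsection perpendicularly.

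The implication $(4)\Rightarrow(3)$ is tautological. The main obstacle is $(3)\Rightarrow(1)$. Here my plan is: given that both checkerboard surfaces are totally geodesic, reconsider the bipyramid decomposition. Each midsection is a regular ideal polygon whose edge length is a strictly monotone function of the number of sides. For the shaded surface to extend across a crossing as a single totally geodesic surface, the shaded midsections meeting at that crossing must agree in edge length along the arcs where they are identified, forcing all shaded faces of $\pi(L)$ to share a common size $m$; the analogous argument on the white side yields a common $n$, producing $(2)$ and hence $(1)$. The subtlety---which I expect to be the real technical obstacle---is verifying that the diagrammatic checkerboard surfaces coincide (up to isotopy) with the surfaces obtained by assembling midsections in the bipyramid decomposition, so that the edge-length comparison is valid. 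This requires Mostow--Prasad rigidity to pin down the surface structure canonically, together with a careful matching of the half-twisted bands at crossings to the vertical-face gluings between adjacent bipyramids.
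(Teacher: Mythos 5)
The paper itself does not prove Theorem~\ref{Thm:RGCR}; it imports it from Gan~\cite{Gan:Alternating} (genus $0$) and Kaplan-Kelly~\cite{RKK:RightAngled} (genus $\geq 1$), so your argument has to stand on its own, and its central step $(2)\Rightarrow(4)$ does not. The reflection of a regular bipyramid through its horizontal midsection interchanges the \emph{top} and \emph{bottom} vertical faces along each midsection edge, so the two angles you compare are indeed equal, but they sum to the interior dihedral angle of the bipyramid along that edge, not to $\pi$; summing to $\pi$ would mean the top and bottom vertical faces are coplanar, which never happens for a convex bipyramid. Concretely, for the $[4,4,4,4]$ tiling the bipyramids are regular ideal octahedra (as noted in the proof of Theorem~\ref{Thm:CanonicalOnTorus}); there the dihedral angle along a midsection edge is $\pi/2$ and the vertical-face-to-midsection angle is $\pi/4$, not $\pi/2$. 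The proposed global involution is likewise impossible: reflecting every bipyramid through its own midsection fixes the midsections of \emph{both} colors pointwise, and along any crossing arc the four midsection pages of the incident bipyramids cannot all be coplanar (their four positive sector angles sum to $2\pi$), so an isometry fixing them all pointwise has identity differential at a point of that arc and is therefore the identity, contradicting that it swaps apexes. Hence the local reflections do not glue coherently, and neither total geodesicity nor orthogonality has been established. The argument that does work runs around each crossing arc: the four wedges of the two shaded and two white regular bipyramids fill angle $2\pi$, each wedge is bisected by its midsection, and hypothesis $(2)$ (all shaded bipyramids are isometric regular $m$-bipyramids, all white ones isometric regular $n$-bipyramids) forces each of the four sectors between consecutive pages to be $\pi/2$, which simultaneously makes the shaded pages coplanar, the white pages coplanar, and the two surfaces orthogonal.

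The step $(3)\Rightarrow(1)$ is also not viable as written: the horizontal midsections are \emph{ideal} polygons, so their edges have infinite length and there is no finite ``edge length'' that is a monotone function of the number of sides to compare. You would need to compare finite data instead, for example the wedge angles along the crossing arcs: coplanarity of the shaded pages and of the white pages at every crossing, combined with the $2\pi$ angle sum there, pins down the wedge angles and hence, via regularity of the bipyramids, the number of sides of each face of each color. Your closing concern about matching the diagrammatic checkerboard surfaces with surfaces assembled from midsections is legitimate but secondary; the decisive gaps are the false $\pi/2$ computation, the nonexistent involution, and the vacuous edge-length comparison. The purely combinatorial equivalence $(1)\Leftrightarrow(2)$ is fine.
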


We remark that \cite[Theorem 6]{RKK:RightAngled} applies to links that are slightly more general than tiling links. In the work of Kaplan-Kelly~\cite{RKK:RightAngled,RKK:Thesis}, the links satisfying condition~\eqref{Itm:RGCR} are called \emph{RGCR links}.

\section{Orbifolds, commensurability, and arithmeticity}
\label{Sec:Orbifolds}

This section provides background on orbifolds, commensurability, and arithmetic notions. While none of the results surveyed here are original, we found it useful to collect the definitions and some important theorems in one place.

We begin by reviewing Kleinian groups. A \emph{Kleinian group} is a discrete subgroup $\Gamma < \isom(\HH^3)$. In our usage, a Kleinian group $\Gamma$ is not required to be contained in the orientation-preserving subgroup $\isom^+(\HH^3) \cong PSL(2,\C)$.
If a Kleinian group $\Gamma$ is torsion-free, then the quotient $M = \HH^3 / \Gamma$ is a hyperbolic $3$--manifold. If $\Gamma$ contains torsion, then its action contains fixed points, hence the quotient $\HH^3 / \Gamma$ is an example of a geometric $3$--orbifold, defined in Definition~\ref{Def:Orbifold}.

\subsection{Orbifolds}
Here we recall information about orbifolds, their singularities, and their notation. See Cooper, Hodgson, and Kerckhoff \cite{CHK:Orbifolds} or Walsh \cite{Walsh:Survey} for excellent references.

\begin{defn}\label{Def:Orbifold}
 A \emph{geometric $n$--orbifold} $\orbO$ is the quotient of simply connected Riemannian $n$--manifold $M$ by a discrete subgroup  $\Gamma < \isom(M)$. The geometric orbifold $\orbO$ is called \emph{orientable} if $\Gamma$ is orientation-preserving, and \emph{non-orientable} otherwise. 
    
    The path-metric on $M$ descends to a path-metric on $\orbO$. The group $\Gamma$ is called the \emph{orbifold fundamental group} of $\orbO$, and denoted $\pi_1(\orbO)$. We write $M = \widetilde \orbO$. The universal covering map $f: M \to \orbO$ is a local isometry with respect to the path metric. 
\end{defn}

There is a more general theory of orbifolds, without a given metric structure and without the assumption that the orbifold is covered by an $n$--manifold. See \cite[Chapter 2]{CHK:Orbifolds} for an introduction. Orbifolds that are covered by manifolds, as in Definition~\ref{Def:Orbifold}, are called \emph{good}. 
In the sequel, the word ``geometric'' will be presumed when describing orbifolds. Furthermore, in most applications, the universal cover $M = \widetilde \orbO$ will carry a constant-curvature metric.

\begin{defn}\label{Def:SingularLocus}
Let $\orbO = M / \Gamma$ be an $n$--orbifold as in Definition~\ref{Def:Orbifold}.
 For a point $p \in M$, let $\Gamma_p < \Gamma$ be the stabilizer of $p$. If $\Gamma_p \neq \{1\}$, the quotient point $x = f(p) \in \orbO$ is called a \emph{singular point} of  $ \orbO$. For a sufficiently small $\epsilon > 0$, the metric ball $B_\epsilon(x)$ is isometric to $B_\epsilon(p)/\Gamma_p$. The (necessarily finite) group $\Gamma_p$ is called the \emph{local group} of $x$. Note that the choice of preimage  $p \in f^\inv (x)$ only changes the local group by conjugation in $\isom(M)$.

The \emph{singular locus} of $\orbO$, denoted $\Sigma_\orbO$, is the set of all singular points. The \emph{mirror locus} of $\orbO$, denoted $\Sigma_\orb^{\mathrm{mir}}$, consists of singular points $x \in \orbO$ whose point group $\Gamma_p$ contains a reflection in some codimension--1 hyperplane $H \subset M$. In the special case where $\Gamma_p = \{1, \gamma\}$ and $\gamma$ is a reflection in $H$, the metric ball $B_\epsilon(x) = B_\epsilon(p)/\Gamma_p$ is modeled on a half-ball in $\R^n$ with a ``mirror'' running along the quotient of $H$. Note that the mirror points of $\orbO$ are not boundary points, because geodesics in $B_\epsilon(x)$ bounce off the mirror locus and continue.
\end{defn}

% One consequence of Definition~\ref{Def:SingularLocus} is that if $\Gamma$ does not have any reflections, hence $\orbO$ does not have any mirror locus, then the singular locus $\Sigma_\orbO$ is a cell complex of codimension at least $2$. In particular, if $\orbO$ is $2$--dimensional and has no mirror locus, then every point group $\Gamma_p$ is a cyclic group $\Z_r \cong \langle e^{2 \pi i /r} \rangle$ acting by rotations about $p$. It follows that the underlying topological space of $\orbO$ is homeomorphic to a surface $X^2$, and $\Sigma_\orbO$ is a discrete set of points, called \emph{cone points}. If the surface $X^2$ has finite type, the orbifold $\orbO$ is classified up to equivalence by the data $X^2(r_1, \ldots, r_n)$, where $r_1, \ldots, r_n$ are the orders of the point groups at the cone points. Here, equivalence means that orbifolds $\orbO = M / \Gamma$ and $\orb' = M' / \Gamma'$ have universal covers related by a diffeomorphism $M \to M'$ that carries $\Gamma$ to $\Gamma'$.

% In the special case where the underlying space of $\orbO$ is $S^2$, the orbifold $S^2(\ell,m,n)$ is called a \emph{turnover} and the orbifold $S^2(2,2,2,2)$ is called a \emph{pillowcase}.  Turnovers admit a hyperbolic structure whenever $\frac{1}{\ell}+\frac{1}{m}+\frac{1}{n} < 1$. Pillowcases admit a Euclidean structure. See \cite[Theorem 2.22]{CHK:Orbifolds}.

\begin{defn}
Definitions~\ref{Def:Orbifold} and~\ref{Def:SingularLocus} extend naturally to the setting where $M$ is a simply connected $n$--manifold with boundary. In this case, $\orbO = M / \Gamma$ is an $n$--orbifold with boundary, and $\bdy \orbO$ is a $(n-1)$--orbifold. As mentioned above, the mirror locus $\Sigma_\orb^{\mathrm{mir}}$ does not consist of boundary points (although it may intersect $\bdy \orbO$ transversely).
\end{defn}

\begin{defn}\label{Def:Silvering}
Let $\orbO$ be an $n$--orbifold with boundary. Assume that $S = \bdy \orbO$ is non-empty and totally geodesic. Let $D\orbO$ denote the double of $\orbO$ along $\bdy \orbO$. By construction, $S = \bdy \orbO$ embeds into $D \orbO$ as a totally geodesic hypersurface, invariant under a reflection $\gamma$ interchanging the two copies of $\orbO$. The quotient $\orbO^\pm = D \orbO / \langle \gamma \rangle$ is an orbifold homeomorphic to $\orbO$, with the important difference that the hypersurface $S$ has become mirror locus in $\orb^\pm$. The process of turning $S$ from boundary into mirror locus is called \emph{silvering}. See \cite[Section 2.1]{CHK:Orbifolds}.

By construction, the doubled orbifold $D \orbO$ and the silvered orbifold $\orb^\pm$ have empty boundary. They share a common universal cover $\widetilde{M^\pm} = \widetilde{\orb^\pm}$, which can be obtained from $M = \widetilde \orbO$ by iterated reflection along the totally geodesic boundary $\bdy M$.
\end{defn}

The following special case will be of primary interest below (see Corollary~\ref{Cor:SameTileCommens}). If $\orbO$ is a hyperbolic $3$--orbifold with totally geodesic boundary, the universal cover $M = \widetilde \orbO$ is a subset of $\HH^3$ cut out by some number of disjoint hyperbolic planes. In this case, $\widetilde{\orb^\pm}$ will be all of $\HH^3$.

\subsection{Commensurability and arithmeticity}
Next, we review some background on commensurability and arithmeticity. We recommend Maclachlan and Reid \cite{MR:Arithmetic} as an excellent reference.

\begin{defn}\label{Def:Commensurable}
Let $M\cong \HH^3/ \Gamma$ and $M'\cong \HH^3/ \Gamma'$ be hyperbolic $3$--orbifolds. We say that $M$ and $M'$ are \emph{commensurable} if they have isometric finite-sheeted covers. 
The Kleinian groups $\Gamma$ and $\Gamma'$ are commensurable if $\Gamma$ and a conjugate of $\Gamma'$ have a common finite index subgroup. 

The \emph{commensurator} of $\Gamma$ is defined to be
\[
\Comm(\Gamma)=\{g \in \isom(\HH^3) \: | \: [\Gamma : \Gamma \cap g\Gamma g^{-1}]< \infty\},
\]
and similarly for $\Gamma'$. It follows that $\Gamma$ and $\Gamma'$ are commensurable if and only if their commensurators coincide, up to conjugacy. Compare \cite[Lemma 2.3]{Walsh:Survey}.
\end{defn}

Generalizing Definition~\ref{Def:Commensurable}, suppose that  $M$ and $M'$ are hyperbolic $3$--manifolds with totally geodesic boundary. We say that $M$ and $M'$ are commensurable if they have isometric finite-sheeted covers. This generalized notion of commensurability will be referenced in Corollary~\ref{Cor:SameTileCommens}.

\begin{remark}\label{Rem:EasyCommensurability}
If finite-volume hyperbolic $3$--manifolds $M$ and $M'$ cover a common $3$--orbifold $\orbO$, then they must be commensurable. The finite-index cover of $M$ and $M'$ corresponds to the subgroup $\pi_1(M) \cap \pi_1(M') < \pi_1(\orbO)$. 

The converse of this statement only holds for non-arithmetic manifolds; see Theorem~\ref{Thm:MargulisArithmeticity}.  
\end{remark}

% The following definition is stated in a simple form that applies only to $3$--manifolds with cusps.

\begin{defn}\label{Def:Arithmetic}
Let $d$ be a positive, square-free integer. The \emph{Bianchi group} determined by $d$ is $PSL(2, \mathcal{O}_d)$, where $\mathcal{O}_d$ is the ring of integers of $\Q(\sqrt{-d})$. This group is discrete in $PSL(2,\C)$, and the quotient $\HH^3 / PSL(2, \mathcal{O}_d)$ is a cusped, orientable hyperbolic $3$--orbifold of finite volume. Since $PSL(2, \mathcal{O}_d)$ contains parabolic elements -- for instance, $[ \begin{smallmatrix}1 & 1 \\0 & 1 \end{smallmatrix} ]$ -- the quotient orbifold $\orbO$ contains cusps.
See \cite[Section 1.4.1]{MR:Arithmetic} for more on Bianchi groups.

A Kleinian group $\Gamma$ containing parabolics is called \emph{arithmetic} if $\Gamma$ is  commensurable to the Bianchi group $PSL_2(\mathcal{O}_d)$ for some $d$. In this situation, the quotient manifold or orbifold $M = \HH^3 / \Gamma$ is also called arithmetic.
\end{defn}

The full definition of arithmetic Kleinian groups, which also covers cocompact groups, is somewhat more involved. See \cite[Chapter 8]{MR:Arithmetic} for details. For Kleinian groups with parabolics, Definition~\ref{Def:Arithmetic} is equivalent to the general definition by  \cite[Theorem 8.2.3]{MR:Arithmetic}.

The following fundamental theorem of Margulis \cite{Margulis:Discrete} characterizes arithmeticity via commensurators. See also Borel~\cite{Borel:ArithmeticityVolume} for an account tailored to Kleinian groups.

\begin{theorem}[Margulis]
\label{Thm:MargulisArithmeticity}
Let $M = \HH^3 / \Gamma$ be a finite volume hyperbolic $3$--manifold.
\begin{enumerate}[\: \: $(1)$]
\item\label{Itm:MargulisAr} If $M$ is arithmetic, then $\Comm(\Gamma)$ is dense in $\operatorname{Isom}(\HH^3)$. The commensurability class of $M$ contains infinitely many mimimal elements.
\item\label{Itm:MargulisNonAr} If $M$ is non-arithmetic, then $\Comm(\Gamma)$ is discrete in $\operatorname{Isom}(\HH^3)$.  Hence $\orbO = \HH^3 /\Comm(\Gamma)$ is the unique minimal $3$--orbifold covered by every orbifold or manifold commensurable to $M$.
\end{enumerate}
\end{theorem}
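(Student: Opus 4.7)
The plan is to treat the two parts separately, since the arithmetic and non-arithmetic directions rely on rather different machinery.

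For part~(1), the first step is to reduce to a Bianchi group: since commensurable Kleinian groups have equal commensurators up to conjugacy, it suffices to analyze $\Comm(PSL(2, \mathcal{O}_d))$. This commensurator contains $PGL(2, \Q(\sqrt{-d}))$, because any matrix over $\Q(\sqrt{-d})$ can be cleared of denominators so as to conjugate a finite-index subgroup of $PSL(2, \mathcal{O}_d)$ into itself. Density of $\Q(\sqrt{-d})$ in $\C$ then upgrades to density of $\Comm(\Gamma)$ in $\isom(\HH^3)$. For the second assertion, I would invoke Borel's construction of infinitely many maximal arithmetic Kleinian groups in a single commensurability class, obtained by varying local integral structures at finite primes; each such maximal group corresponds to a distinct minimal orbifold in the commensurability class.

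For part~(2), the main input is Margulis's commensurator theorem: a lattice in a connected semisimple Lie group with no compact factors is arithmetic if and only if its commensurator is dense in the ambient group. Applied to $\isom^+(\HH^3) \cong PSL(2, \C)$, the contrapositive yields discreteness of $\Comm(\Gamma)$ whenever $\Gamma$ is non-arithmetic. Discreteness combined with the inclusion $\Gamma \subset \Comm(\Gamma)$ and the finite covolume of $\Gamma$ forces $\Comm(\Gamma)$ itself to be a Kleinian group of finite covolume, so $\orbO = \HH^3/\Comm(\Gamma)$ is a finite-volume orbifold. Uniqueness of $\orbO$ as a minimal cover follows directly from the definition of the commensurator: any $\Gamma'$ commensurable to $\Gamma$ satisfies $\Comm(\Gamma') = g\Comm(\Gamma)g^\inv$ for some $g$, so after conjugating we may assume $\Gamma' \subset \Comm(\Gamma)$, producing a covering map $\HH^3/\Gamma' \to \orbO$.

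The principal obstacle is Margulis's commensurator superrigidity in rank one. In higher rank this is a direct consequence of the Margulis superrigidity theorem, but in the rank-one setting of $PSL(2, \C)$ no such superrigidity holds, and one must appeal to the refined arithmeticity criteria developed by Margulis and Venkataramana. The strategy there is to exploit density of $\Comm(\Gamma)$ to extend the defining representation of $\Gamma$ to a representation of $\Comm(\Gamma)$, then use rigidity together with Galois-theoretic arguments to show this representation is defined over a number field, thereby exhibiting $\Gamma$ as a lattice in an arithmetic group and contradicting non-arithmeticity.
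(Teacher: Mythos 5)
The paper does not prove this statement at all: it is imported as a known theorem, cited to Margulis~\cite{Margulis:Discrete} (with Borel~\cite{Borel:ArithmeticityVolume} for the Kleinian-group account), so there is no internal argument to compare against. Your sketch is essentially the standard literature proof of that cited result, and its overall architecture is sound: computing $\Comm(PSL(2,\mathcal{O}_d)) \supset PGL(2,\Q(\sqrt{-d}))$ for density in the arithmetic case, invoking Borel for infinitely many maximal (hence minimal-orbifold) elements, and in the non-arithmetic case using discreteness of $\Comm(\Gamma)$ plus $\Gamma \leq \Comm(\Gamma)$ and the conjugation-invariance of commensurators to get the unique minimal orbifold $\HH^3/\Comm(\Gamma)$.

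Two caveats. First, your reduction of part~(1) to Bianchi groups only covers cusped arithmetic manifolds; the theorem as stated applies to all finite-volume hyperbolic $3$--manifolds, and for cocompact arithmetic lattices (arising from quaternion algebras) the density of the commensurator has to be seen through the algebra's unit group over the invariant trace field rather than through $PGL(2,\Q(\sqrt{-d}))$. In the context of this paper, whose Definition~\ref{Def:Arithmetic} concerns groups with parabolics, this is a scope issue rather than a fatal one, but it should be acknowledged. Second, your closing paragraph misstates the rank-one situation: what fails for lattices in $PSL(2,\C)$ is \emph{lattice} superrigidity, not \emph{commensurator} superrigidity. Margulis's commensurability criterion --- a lattice is arithmetic if and only if its commensurator is non-discrete (equivalently dense) --- holds in all ranks, including rank one, precisely because the superrigidity argument for a dense commensurator does not need higher rank; this is the content of the theorem being quoted, and it is what the paper (and ultimately your sketch) takes as the black-box input. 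So your ``principal obstacle'' paragraph identifies the right deep ingredient but mislabels why it is available; as written it suggests a gap in the rank-one case that does not actually exist.
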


One consequence of Theorem~\ref{Thm:MargulisArithmeticity} is that non-arithmetic finite-volume hyperbolic $3$-manifolds $M$ and $M'$ are commensurable if and only if they cover a common quotient orbifold, namely $\orbO \cong \HH^3 / \Comm(\Gamma) \cong \HH^3 / \Comm(\Gamma')$. This provides a converse to Remark~\ref{Rem:EasyCommensurability}.

\begin{defn}\label{Def:InvariantTraceField}
Let $\Gamma$ be a Kleinian group such that $\HH^3/\Gamma$ is orientable and has finite volume. Let $\Gamma^{(2)}= \langle \gamma^2 : \gamma \in \Gamma \rangle$. The \emph{invariant trace field} of $\Gamma$ is the field $k\Gamma = \Q(\operatorname{tr} \Gamma^{(2)})$. This is a number field, meaning  a finite extension of $\Q$ \cite[Theorem 3.1.2]{MR:Arithmetic}.
\end{defn}

It is known that $k\Gamma$ is an invariant of the commensurability class of $\Gamma$ \cite[Theorem 3.34]{MR:Arithmetic}. For example, if $\Gamma$ is commensurable to the Bianchi group $PSL_2(\mathcal{O}_d)$, then $k\Gamma = \Q(\sqrt{d})$. Compare \cite[Section 4.1]{MR:Arithmetic}.

\subsection{Coxeter polyhedra and reflection groups}
Next, we review (Kleinian) Coxeter groups and the associated orbifolds. This special class of hyperbolic orbifolds plays an important role in our arguments. 

\begin{defn}\label{Def:CoxeterPoly}
Let $P \subset \HH^3$ be a convex hyperbolic polyhedron with totally geodesic faces $F_1, \ldots, F_s$. The polyhedron $P$ is permitted to have ideal and ultra-ideal vertices, as in Figure~\ref{TypesofTetinH3}. We call $P$ a \emph{Coxeter polyhedron} if all of its dihedral angles are of the form $\pi/n$, where $n \geq 2$ is an integer.

If $P \subset \HH^3$ is a Coxeter polyhedron, the \emph{Coxeter group} associated to $P$ is the Kleinian group  $\Gamma(P) < \isom(\HH^3)$ generated by reflections in the faces of $P$. This group is always discrete, and has $P$ as a fundamental domain. Thus, if $P$ has finite volume, then $\Gamma(P)$ has finite co-volume.
The quotient orbifold $\orbP = \HH^3 / \Gamma(P)$ has underlying space $P$, and mirror locus along the faces of $P$. 

The Kleinian group $\Gamma^+(P) < \Gamma(P)$ is the orientation-preserving subgroup of $\Gamma(P)$.
\end{defn}

Kleinian Coxeter groups are considerably more restricted than general Kleinian groups. Accordingly, their arithmeticity can be decided by a straightforward criterion using combinatorial geometry; see Theorem~\ref{Thm:Vinberg}. Similarly, the invariant trace field of $\Gamma^+(P)$ can also be computed using combinatorial methods; see Corollary~\ref{Cor:PolyhedralInvariantTrace}.

The definitions and results described in the remainder of this section originate in the work of Vinberg~\cite{Vinberg67}. Our exposition follows Maclachlan and Reid~\cite[Section 10.4]{MR:Arithmetic}.

\begin{defn}\label{Def:CoxeterGraph}
Let $P \subset \HH^3$ be a Coxeter polyhedron. The \emph{Coxeter diagram} of $P$ is a finite graph whose vertices $v_1, \ldots, v_s$ are in bijection with the faces $F_1, \ldots, F_s$ of $P$. These vertices are connected by labeled edges as follows:
\begin{itemize}
    \item If $F_i, F_j$ meet at angle $\pi/2$, then there is no edge between $v_i$ and $v_j$.
    \item If $F_i, F_j$ meet at angle $\pi/n$ for $n \geq 3$, then $v_i, v_j$ are connected by an edge labeled $n$.
    \item If $F_i, F_j$ do not intersect and share an ideal vertex, then $v_i, v_j$ are connected by an edge labeled $\infty$ (corresponding to angle $0$).
    \item If $F_i, F_j$ do not intersect and do not share an ideal vertex, then $v_i, v_j$ are connected by a dotted edge labeled by a real number $\ell_{ij} = d(F_i, F_j)$. 
\end{itemize}
See Vinberg~\cite[Section 4]{Vinberg67} and Bogachev, Kolpakov, and Kontrovich~\cite[Section 2.1]{BogachevKolpakovKontorovich} for a reference on these conventions. See also Figure~\ref{Fig:DMReflectionQuotient} for an example. If $P$ has finite volume, its Coxeter diagram is connected \cite[Exercise 10.4.1]{MR:Arithmetic}.
\end{defn}

\begin{defn}\label{Def:GramMatrix}
Let $P \subset \HH^3$ be a Coxeter polyhedron with faces $F_1, \ldots, F_s$. Let $r_i$ be the reflection in face $F_i$, and let $\gamma_{ij} = r_i r_j \in \Gamma^+(P)$.
The \emph{Gram matrix} of $P$ is a symmetric $s \times s$ matrix $G(P) = [a_{ij}]$ whose diagonal entries are $a_{ii} = 2$ and whose off-diagonal entries satisfy $a_{ij} = - \trace(\gamma_{ij})$.

The off-diagonal can also be computed as follows:
\begin{itemize}
    % \item The diagonal entries are $a_{ii} = 2$.
    \item If $F_i, F_j$ meet at angle $\pi/n$, then $a_{ij} = - 2 \cos(\pi/n)$. 
    \item If $F_i, F_j$ meet at angle $0$, ie at an ideal vertex, then $a_{ij} = - 2 \cos(0) = -2$.
    \item If $F_i, F_j$ do not intersect and do not share an ideal vertex, then $a_{ij} = -2 \cosh(\ell_{ij})$.
\end{itemize}
Observe that every nonzero, non-diagonal entry of $G(P)$ corresponds to an edge of the Coxeter diagram of $P$.
\end{defn}

\begin{remark}\label{Rem:MinkowskiGram}
Gram matrices have a natural interpretation in the hyperboloid model of $\HH^3$. 
Recall that this model involves endowing $\R^4$ with an indefinite inner product $\langle \cdot, \cdot \rangle$. Given vectors $\xx = (x_1, x_2, x_3, x_4)$ and $\yy = (y_1, y_2, y_3, y_4)$, we set
\[
\langle \xx, \yy \rangle = x_1 y_1 + x_2 y_2 + x_3 y_3 - x_4 y_4,
\]
leading to the indefinite quadratic form $q(\xx) = \langle \xx, \xx \rangle$. Then $\HH^3$ can be identified with the half-hyperboloid
\[
H = \{ \xx \in \R^4 \: : \: q(\xx) = -1, \, x_4 > 0 \}.
\]
Indeed, one may compute the hyperbolic distance between $\xx, \yy \in H$ as $\cosh d(\xx, \yy) = - \langle \xx, \yy \rangle$.

Every totally geodesic plane $\Pi \subset \HH^3$ can be obtained as $\Pi = H \cap W$, where $W$ is a $3$--dimensional vector subspace. Furthermore, $W$ is the orthogonal complement of a vector $\mathbf e \in \R^4$. We call $\ee$ a \emph{normal vector} to $\Pi$.

Now, suppose $P \subset \HH^3 \cong H$ is a Coxeter polyhedron with faces $F_1, \ldots, F_s$. For each $F_i$, let $\Pi_i = W_i \cap H$ be the supporting hyperbolic plane. Let $\ee_i$ be a normal vector to $\Pi_i$, chosen so that it is outward-pointing with respect to $P$, and so that $q(\ee_i) = 1$. Now, the entries of the Gram matrix $G(P)$ can be realized as $a_{ij} = 2 \langle \ee_i, \ee_j \rangle$. Using this point of view, one can show that the Gram matrix $G(P)$ has rank $4$ and signature $(3,1)$. See \cite[Page 323]{MR:Arithmetic}.
 \end{remark}

\begin{remark}\label{Rem:Factor2Normalization}
The factor of $2$ in the formula $a_{ij} = 2 \langle \ee_i, \ee_j \rangle$, and in the bullets of Definition~\ref{Def:GramMatrix}, is a matter of convention, present in a proper subset of the literature. Compare Maclachlan and Reid~\cite[Section 10.4]{MR:Arithmetic} to  \cite{BogachevKolpakovKontorovich} and \cite{Vinberg67}.
\end{remark}

\begin{defn}\label{Def:PolyhedralFields}
Let $P \subset \HH^3$ be a Coxeter polyhedron with faces $F_1, \ldots, F_s$ and Gram matrix $G(P) = [a_{ij}]$. A \emph{multi-index} is a subset $I  \subset \{1, \ldots, s\}$, endowed with an ordering of its elements. To every multi-index $I = \{ i_1, \ldots, i_r \}$, we associate a cyclic product
\begin{equation}
  b_I = a_{i_1 i_2} a_{i_2 i_3} \cdots a_{i_r i_1}.
\end{equation}
The nonzero cyclic products correspond to closed paths in the Coxeter diagram of $P$ \cite[Exercise 10.4.1]{MR:Arithmetic}.
We associate the following number fields to $P$:
\begin{align}
K(P) &= \Q(a_{ij} : 1 \leq i,j \leq n ), \\
k(P) &= \Q(b_I : I \subset \{1, \ldots, n \} \, ).    
\end{align}
The field $k(P)$ is sometimes called the \emph{adjoint trace field} of $\orbP$. By a theorem of Vinberg~\cite{Vinberg71}, $k(P)$ is a commensurability invariant.
\end{defn}

The following elegant characterization of arithmetic reflection groups is a special case of a theorem of Vinberg~\cite[Theorem 2 and Remark 1]{Vinberg67}. Compare \cite[Theorem 10.4.5]{MR:Arithmetic}.

\begin{theorem}[Vinberg]\label{Thm:Vinberg}
Let $P \subset \HH^3$ be a noncompact, finite-volume Coxeter polyhedron, with Gram matrix $G(P) = [a_{ij}]$. Then the reflection group $\Gamma(P)$ is arithmetic if and only if the following two conditions hold: 
\begin{enumerate}[\: \: (a)]
\item\label{Itm:SimplerA} Every $a_{ij}$ is an algebraic integer.
\item\label{Itm:SimplerB} Every cyclic product $b_I$ is rational.
\end{enumerate}
\end{theorem}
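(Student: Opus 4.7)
The plan is to realize $\Gamma(P)$ as a subgroup of the orthogonal group of the Gram form and then apply Vinberg's general arithmeticity criterion for quadratic-form subgroups. Using the hyperboloid model from Remark~\ref{Rem:MinkowskiGram}, I would fix outward-pointing unit normals $\ee_1,\dots,\ee_s$ to the faces of $P$. Since $P$ has finite volume and $G(P)$ has rank $4$, a subset of these normals is a basis of $\R^4$, and each reflection $r_i(\vv) = \vv - \langle \vv,\ee_i\rangle\ee_i$ is represented in this basis by a matrix whose entries are expressible in terms of the $a_{ij}$. A direct computation shows that, while individual entries of a reflection word depend on the chosen basis, the traces of products $r_{i_1}\cdots r_{i_r}$ are polynomial in the cyclic products $b_I$. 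Thus $\Gamma(P)$ is conjugate into the orthogonal group $O(G(P);\, k(P))$ of the Gram form, defined over the adjoint trace field $k(P)$.

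For the forward direction, suppose $\Gamma(P)$ is arithmetic. By Definition~\ref{Def:Arithmetic} it is commensurable with $PSL_2(\mathcal O_d)$, so the invariant trace field satisfies $k\Gamma^+(P) = \Q(\sqrt{-d})$ and every element of $\Gamma^+(P)^{(2)}$ has algebraic-integer trace lying in $\Q(\sqrt{-d})$. Passing to squares changes the $a_{ij}=-\trace(r_ir_j)$ only by rational polynomials in the original $a_{ij}$, so condition~\eqref{Itm:SimplerA} follows from standard facts about traces of arithmetic Kleinian groups. For condition~\eqref{Itm:SimplerB}, each cyclic product $b_I$ is the trace of $r_{i_1}r_{i_2}\cdots r_{i_r}r_{i_1}\cdot(\text{scalar})$, hence lies in $k\Gamma^+(P)=\Q(\sqrt{-d})$; on the other hand, $b_I$ is manifestly real as a product of traces of real-linear involutions preserving the signature $(3,1)$ form. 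So $b_I\in \R\cap\Q(\sqrt{-d}) = \Q$.

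For the reverse direction, assume both (a) and (b) hold. Then $G(P)$ is a symmetric matrix whose entries are algebraic integers and all of whose cyclic invariants lie in $\Q$, so after conjugating $\Gamma(P)$ inside $O(G(P))$ we can descend the form to a $\Q$-form of signature $(3,1)$. Since there is only one such form up to similarity over $\Q$ with a given rational discriminant, $G(P)$ is similar over $\Q$ to the standard Minkowski form, whose integer orthogonal group is (commensurable with) a Bianchi group. The reflections $r_i$ then act by matrices with algebraic-integer entries on the associated $\Z$-lattice, so $\Gamma(P)$ lies in (a conjugate of) this arithmetic group, which gives commensurability with $PSL_2(\mathcal O_d)$ for the appropriate $d$.

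The main obstacle is the reverse direction: extracting a genuine $\Q$-structure on the Gram form from the purely combinatorial data of integrality and rational cyclic invariants. This step is exactly the content of Vinberg's general criterion~\cite{Vinberg67}, and implementing it honestly requires showing that the $k(P)$-algebra generated by the reflections is stable under the whole Galois action of $K(P)/k(P)$, and that every nontrivial Galois conjugate of the form is positive definite (which in the noncompact $\HH^3$ case is automatic once $k(P)=\Q$). I would invoke this machinery rather than reprove it.
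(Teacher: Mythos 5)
Your proposal is, in substance, the paper's own argument: for the reverse direction you ultimately invoke Vinberg's general three-condition criterion and observe that rationality of all cyclic products forces $k(P)=\Q$, which makes the totally-real condition automatic and the condition on Galois conjugates of the Gram matrix vacuous; for the forward direction you use commensurability with a Bianchi group to place each $b_I$ in $k\Gamma^+(P)=\Q(\sqrt{-d})$ and intersect with $\R$ to conclude $b_I\in\Q$, exactly as in the paper. One small repair in that forward step: $b_I$ is a \emph{product} of traces, not itself the trace of a word in the reflections (up to a scalar), so your justification that $b_I$ lies in the invariant trace field should be replaced by the citation the paper uses, namely \cite[Lemma 10.4.2]{MR:Arithmetic}; reality of $b_I$ is immediate since every $a_{ij}$ is real.

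The one genuine problem is your middle paragraph, which attempts a self-contained proof of the reverse direction by descending $G(P)$ to a $\Q$-form. As written it does not work. First, the claim that a quaternary form of signature $(3,1)$ over $\Q$ is determined up to similarity by its rational discriminant is false without an isotropy hypothesis: for instance, with discriminant $-7$ one has both the isotropic form whose integral orthogonal group is commensurable with $PSL_2(\mathcal{O}_7)$ and an anisotropic form yielding a cocompact arithmetic Kleinian group with the same invariant trace field; these are not similar. Isotropy is in fact available here because $P$ is noncompact (so $\Gamma(P)$ contains parabolics), but the deeper issue is the first step: producing a $\Q$-structure at all from integrality of the $a_{ij}$ and rationality of the $b_I$ is precisely the content of Vinberg's theorem, as you concede in your final paragraph. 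Since you end by invoking that theorem, your proof is correct and coincides with the paper's; the descent sketch should be deleted or clearly flagged as motivation rather than argument.
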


\begin{proof}
Let the fields $K(P)$ and $k(P)$ be as in Definition~\ref{Def:PolyhedralFields}.
Vinberg proved the following three-part criterion for arithmeticity, which holds for both compact and non-compact polyhedra $P$:\begin{enumerate}
\item\label{Itm:Vinberg1} Every $a_{ij}$ is an algebraic integer.
\item\label{Itm:Vinberg2} $k(P)$ is totally real, meaning every embedding $k(P) \to \C$ has image in $\R$.
\item\label{Itm:Vinberg3} For every embedding $\sigma \from K(P) \to \C$ such that $\sigma \vert_{k(P)} \neq \operatorname{Id}$, the matrix $[\sigma(a_{ij})]$ is positive semi-definite.
\end{enumerate}
See Maclachlan and Reid~\cite[Theorem 10.4.5]{MR:Arithmetic} for this formulation of Vinberg's criterion.

Now, suppose that a Coxeter polyhedron $P$ satisfies \eqref{Itm:SimplerA} and \eqref{Itm:SimplerB}. Since $b_I \in \Q$ for every multi-index $I$, we have $k(P) = \Q(b_I) = \Q$, so condition \eqref{Itm:Vinberg2} holds. Furthermore, every embedding $\sigma \from K(P) \to \C$ must restrict to the identity on $k(P) = \Q$, hence condition \eqref{Itm:Vinberg3} holds vacuously. Thus Vinberg's theorem implies $\Gamma(P)$ is arithmetic.

For the converse, suppose $P$ is non-compact and $\Gamma(P)$ is arithmetic. Then $\Gamma(P)$ and $\Gamma^+(P)$ are commensurable to the Bianchi group $PSL(2, \mathcal{O}_d)$ for some $d$, hence the invariant trace field is $k \Gamma^+(P) = \Q(\sqrt{-d})$. By \cite[Lemma 10.4.2]{MR:Arithmetic}, $b_I \in k \Gamma^+(P) = \Q(\sqrt{-d})$ for every $I$. By Vinberg's condition~\eqref{Itm:Vinberg2}, $k(P)$ is totally real, hence $b_I \in \R$ for every multi-index $I$. But $\Q(\sqrt{-d}) \cap \R = \Q$, hence $b_I \in \Q$.
\end{proof}

\subsection{Gram matrices and invariant trace fields}
The Gram matrix $G(P)$ can also be used to compute the invariant trace field $k\Gamma^+(P)$.

\begin{remark}\label{Rem:PathVectors}
Let $H$ be the hyperboloid model of $\HH^3$.
Let $P \subset \HH^3 \cong H$ be a finite-volume Coxeter polyhedron with faces $F_1, \ldots, F_s$. Recall that the Coxeter diagram of $P$ is connected~\cite[Exercise 10.4.1]{MR:Arithmetic}. For every face $F_r$, let $\gamma_r$ be a path in the Coxeter diagram from the vertex corresponding to $F_1$ to the vertex corresponding to $F_r$. This path is a concatenation $\gamma_r = e_{1,i_1} e_{i_1, i_2} \cdots e_{i_{r-1}, i_r}$. Each edge $e_{ij}$ along this path corresponds to an entry $a_{ij} \in G(P)$. If $\ee_r \in \R^4$ is the unit normal vector to $F_r$, as in Remark~\ref{Rem:MinkowskiGram}, we define a vector
\[
\vv_r = a_{1,i_1} a_{i_1, i_2} \cdots a_{i_{r-1}, i_r} \ee_r.
\]
Observe that every $\vv_r$ depends on the choice of a path in the Coxeter diagram, hence  $\vv_r$ is well-defined up to scalar multiplication by an element of  $k(P)$. Recall from Definition~\ref{Def:PolyhedralFields} that $k(P)$ is generated by elements $b_I$ corresponding to loops in the Coxeter graph. 

Let $M = M(P) \subset \R^4$ be the vector space over $k(P)$ spanned by $\vv_1, \ldots, \vv_s$. This is well-defined, because the vectors $\vv_i$  are defined up to scalar multiplication in $k(P)$. Maclachlan and Reid observe~\cite[Exercise 10.4.1]{MR:Arithmetic} that $M \otimes \R = \R^4$, hence $M$ is $4$--dimensional, and the inner product $\langle \cdot, \cdot \rangle$ described in Remark~\ref{Rem:MinkowskiGram} still has signature $(3,1)$ on $M$.
Let $d \in k(P)$ be the discriminant of the quadratic form $q$, restricted to $M$. This discriminant $d$ determines the invariant trace field.
\end{remark}

The following result is \cite[Theorem 10.4.1]{MR:Arithmetic}.

\begin{theorem}\label{Thm:PolyhedralDiscriminant}
Let $P \subset \HH^3 \cong H$ be a finite-volume Coxeter polyhedron with faces $F_1, \ldots, F_s$. Let $k(P)$ be the adjoint trace field in Definition~\ref{Def:PolyhedralFields}, and let $d \in k(P)$ be the discriminant as in Remark~\ref{Rem:PathVectors}. Then the invariant trace field of $\Gamma^+(P)$ is
\[
k \Gamma^+(P) = k(P) (\sqrt{d} ).
\]
\end{theorem}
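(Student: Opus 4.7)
The plan is to exploit the exceptional isomorphism $\operatorname{Spin}(3,1) \cong SL(2,\C)$ (equivalently $SO^+(3,1) \cong PSL(2,\C)$) and track how the trace field behaves when we pass from the quadratic form picture to the $PSL(2,\C)$ picture. The invariant trace field is designed so that one must compute traces of squares (equivalently, traces of pairs of reflections composed into commutators or even-word elements), and these traces lie naturally in a quadratic extension of the adjoint trace field $k(P)$.

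First, I would establish that the Kleinian group $\Gamma^+(P)$ is a priori defined over $k(P)$ on the subspace $M \subset \R^4$. Using Remark~\ref{Rem:MinkowskiGram}, each reflection $r_i$ acts on $\R^4$ by $\xx \mapsto \xx - \langle \xx, \ee_i \rangle \ee_i$ (up to the factor-of-$2$ normalization of Remark~\ref{Rem:Factor2Normalization}). I would check that when we expand $r_i$ in the rescaled basis $\vv_1,\dots,\vv_s$, all matrix entries become products of $a_{ij}$'s lying on closed paths in the Coxeter diagram, hence are in $k(P)$. Since the $\vv_r$ are well-defined up to $k(P)^\times$-scaling (by definition of $b_I$ and $k(P)$), the action of $\Gamma^+(P)$ preserves $M$ as a $k(P)$-vector space, and the restriction of $q$ to $M$ is a non-degenerate quadratic form of signature $(3,1)$ over $k(P)$. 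In particular $\Gamma^+(P) \hookrightarrow SO^+(M, q\vert_M)$ is a representation defined over $k(P)$, with discriminant $d \in k(P)^\times / (k(P)^\times)^2$.

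Second, I would pass to the spin cover. The isomorphism class of the algebraic group $SO^+(M, q\vert_M)$ is controlled by its Clifford algebra, and the even Clifford algebra $C_0(q\vert_M)$ is a quaternion algebra over the quadratic extension $k(P)(\sqrt{-\operatorname{disc}(q\vert_M)}) = k(P)(\sqrt{d})$ (with the sign depending on the convention used for discriminants of indefinite forms of signature $(3,1)$). The spin representation then realizes a finite-index subgroup of $\Gamma^+(P)$ inside $PSL_1(B)$ for this quaternion algebra $B$, and traces of elements of $\Gamma^{+(2)}(P)$ become reduced traces of elements of $B$. This shows $k\Gamma^+(P) \subseteq k(P)(\sqrt{d})$.

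Finally, I would show the inclusion is an equality. One direction is clear: every $\gamma_{ij}^2 = (r_i r_j)^2$ has $-\trace(\gamma_{ij}) = a_{ij} \in k(P)$, so $\Q(\trace \gamma^2) \supseteq \Q(a_{ij}^2 - 2) \supseteq k(P)$, because all length-$2$ cyclic products $b_{\{i,j\}} = a_{ij}^2$ are produced by squaring an edge. To produce the extra $\sqrt{d}$, I would find a hyperbolic element $\gamma \in \Gamma^+(P)$ -- for instance, a composition of reflections in three pairwise disjoint faces forming a suitable configuration -- whose trace squared equals $\trace(\gamma^2) + 2$ lies in $k(P)(\sqrt{d})$ but not in $k(P)$, by an explicit computation of $\trace(\gamma)$ in the spin representation in terms of the Gram matrix entries and a choice of $\sqrt{d}$. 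The main obstacle, which I expect to require the most care, is precisely this last step: identifying an explicit element whose spin trace witnesses the full extension $k(P)(\sqrt{d})$, and verifying that the discriminant appearing there matches the discriminant of $q\vert_M$ computed using any basis of path-vectors $\vv_r$. This relies on the observation that changing the basis $\{\vv_r\}$ by elements of $k(P)^\times$ scales the discriminant by squares, so the class of $d$ in $k(P)^\times/(k(P)^\times)^2$, and therefore the field $k(P)(\sqrt{d})$, is an intrinsic invariant of the polyhedron.
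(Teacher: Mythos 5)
First, a point of comparison: the paper does not actually prove this statement --- it is quoted from Maclachlan--Reid \cite[Theorem 10.4.1]{MR:Arithmetic} --- so the benchmark is their proof, which does proceed along the lines you describe: the $k(P)$--rational structure on $M(P)$, the even Clifford algebra of $q\vert_M$ with centre $k(P)(\sqrt d)$, and the resulting spin/quaternion-algebra description of $\Gamma^+(P)$. Your overall strategy is therefore the standard one, and your first two steps (rationality of the action on $M$ over $k(P)$, and $k\Gamma^+(P)\subseteq k(P)(\sqrt d)$ via reduced traces in $C_0(q\vert_M)$) are sound in outline.

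There are, however, two genuine gaps. (1) Your justification of $k(P)\subseteq k\Gamma^+(P)$ is incorrect as written: $k(P)$ is generated by \emph{all} cyclic products $b_I$, not only the length-two ones $b_{\{i,j\}}=a_{ij}^2$, and the field $\Q(a_{ij}^{2} : i,j)$ need not contain the products around longer cycles of the Coxeter diagram (for the polyhedra in this paper, the product around the $6$--cycle of Figure~\ref{Fig:DMReflectionQuotient} is an essential generator, and it is not a priori expressible in terms of the squared entries). The correct statement is \cite[Lemma 10.4.2]{MR:Arithmetic}, whose proof uses traces of even words running around entire cycles, not just squares of the edge rotations $\gamma_{ij}$. (2) The containment $k(P)(\sqrt d)\subseteq k\Gamma^+(P)$ --- which you yourself flag as the main obstacle --- is left entirely open: you hope to exhibit a hyperbolic element whose spin trace witnesses $\sqrt d$, but you produce no such element and no mechanism guaranteeing one exists for an arbitrary finite-volume Coxeter polyhedron. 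In fact no element hunt is needed. Since $q\vert_M$ has signature $(3,1)$, its discriminant $d$ is negative, so $k(P)(\sqrt d)$ is an imaginary quadratic extension of the real field $k(P)$; once $k(P)\subseteq k\Gamma^+(P)\subseteq k(P)(\sqrt d)$ is established, $k\Gamma^+(P)$ must be either $k(P)$ or $k(P)(\sqrt d)$, and it cannot be $k(P)$ because the invariant trace field of a finite-covolume Kleinian group is never contained in $\R$ (a non-elementary group with real traces preserves a hyperbolic plane and so has infinite covolume). Hence $k\Gamma^+(P)=k(P)(\sqrt d)$. Without repairing (1) and completing (2) in some such way, your argument establishes only the inclusion $k\Gamma^+(P)\subseteq k(P)(\sqrt d)$.
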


The invariant trace field $k(P) (\sqrt{d} )$ can be computed using the following practical procedure. Since $M = M(P)$ is a $4$--dimensional vector space, it is spanned by four vectors in $\{\vv_1, \ldots, \vv_s\}$. After reordering, we may assume that $\{\vv_1, \ldots, \vv_4\}$ form a basis.  Let $G'(P)$ be the $4 \times 4$ matrix whose entries are $\langle \vv_i, \vv_j \rangle$ for $1 \leq i, j \leq 4$. 
Since the rows of $G'(P)$ are well-defined up to scalar multiplication by elements in $k(P)$, and the matrix is symmetric, the determinant $\det G'(P)$ is well-defined up to multiplication by \emph{squares} of elements of $k(P)$. Based on these observations,
Maclachlan and Reid point out \cite[Section 10.4.3]{MR:Arithmetic} that $\det G'(P) = c^2 d$, where $c \in k(P)$ and $d$ is the discriminant. Since the factor of $c^2$ does not affect the field extension, it follows that
\[
k \Gamma^+(P) = k(P)(\sqrt{d}) = k(P)(\sqrt{c^2 d}).
\] 
Thus we obtain the following corollary:

\begin{corollary}\label{Cor:PolyhedralInvariantTrace}
Let $P \subset \HH^3 \cong H$ be a finite-volume Coxeter polyhedron with faces $F_1, \ldots, F_s$. Let $k(P)$ be the adjoint trace field  in Definition~\ref{Def:PolyhedralFields}. Let the vectors $\vv_1, \ldots, \vv_s$ be as in Remark~\ref{Rem:PathVectors}. Suppose, after reordering, that $\vv_1, \ldots, \vv_4$ span $M(P)$, and that $G'(P)$ is the $4 \times 4$ matrix whose entries are $\langle \vv_i, \vv_j \rangle$ for $1 \leq i, j \leq 4$. 
Then the invariant trace field of $\Gamma^+(P)$ is
\[
k \Gamma^+(P) = k(P) \left(\sqrt{ \det G'(P) } \right).
\]
\end{corollary}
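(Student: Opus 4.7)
The plan is to deduce Corollary~\ref{Cor:PolyhedralInvariantTrace} directly from Theorem~\ref{Thm:PolyhedralDiscriminant} by tracking the square-class of the discriminant of $q$ restricted to $M(P)$. Theorem~\ref{Thm:PolyhedralDiscriminant} asserts $k\Gamma^+(P) = k(P)(\sqrt{d})$, where $d \in k(P)$ is the discriminant of the form $q|_{M(P)}$. Since adjoining $\sqrt{d}$ to $k(P)$ only depends on the class of $d$ in $k(P)^*/(k(P)^*)^2$, it will suffice to show that $\det G'(P)$ represents this same class. So the strategy reduces to verifying that $\det G'(P)$ is well-defined modulo squares in $k(P)^*$, and that it differs from $d$ by a square.

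First I would recall the standard fact that if $q$ is a non-degenerate symmetric bilinear form on a finite-dimensional $k$--vector space $M$, and $(\vv_1, \dots, \vv_n)$ is any ordered basis, then the determinant of the matrix $B = (\langle \vv_i, \vv_j \rangle)_{i,j}$ is a well-defined invariant of $q$ modulo $(k^*)^2$: any change of basis given by an invertible matrix $A$ replaces $B$ by $A^T B A$, so $\det B$ changes by $(\det A)^2$. In our setting, Remark~\ref{Rem:PathVectors} ensures that $M(P)$ is $4$--dimensional over $k(P)$, that $q$ restricts to a form of signature $(3,1)$ and hence is non-degenerate, and that after reordering we may assume $\vv_1, \dots, \vv_4$ is a basis. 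So the quantity $d := \det G'(P)$, taken modulo $(k(P)^*)^2$, agrees with the discriminant appearing in Theorem~\ref{Thm:PolyhedralDiscriminant}.

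Next I would account for the additional ambiguity noted in Remark~\ref{Rem:PathVectors}: each vector $\vv_r$ is defined only up to scalar multiplication by some $\lambda_r \in k(P)^*$, coming from the choice of path in the Coxeter diagram. Replacing the basis $(\vv_1, \dots, \vv_4)$ by $(\lambda_1 \vv_1, \dots, \lambda_4 \vv_4)$ changes $G'(P)$ to $D^T G'(P)\, D$ with $D = \operatorname{diag}(\lambda_1, \dots, \lambda_4)$, so $\det G'(P)$ is multiplied by $(\lambda_1 \lambda_2 \lambda_3 \lambda_4)^2 \in (k(P)^*)^2$. Thus the path ambiguity does not affect the square-class either, confirming that $\det G'(P) \equiv d \pmod{(k(P)^*)^2}$. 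Combining with Theorem~\ref{Thm:PolyhedralDiscriminant} yields
\[
k\Gamma^+(P) = k(P)(\sqrt{d}) = k(P)\!\left(\sqrt{\det G'(P)}\right),
\]
which is the claim.

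Here the statement is essentially a packaging of Theorem~\ref{Thm:PolyhedralDiscriminant}, so the hardest part is really conceptual rather than technical: one must be careful to separate two independent sources of ambiguity in $\det G'(P)$ (choice of basis of $M(P)$ among the $\vv_i$, and choice of the $\vv_i$ themselves via paths in the Coxeter diagram) and confirm that each source contributes only a square factor. Once these are disentangled, the invariance of the square-class of the discriminant under both operations delivers the result with no further computation.
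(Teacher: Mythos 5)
Your proposal is correct and follows essentially the same route as the paper, which likewise deduces the corollary from Theorem~\ref{Thm:PolyhedralDiscriminant} by observing that $\det G'(P)$ is well-defined up to squares in $k(P)$ and equals $c^2 d$ for the discriminant $d$ (the paper cites Maclachlan--Reid for this, while you verify the square-class invariance directly). Your explicit separation of the two sources of ambiguity (basis choice and path-dependent scaling of the $\vv_r$) is a fine, slightly more self-contained rendering of the same argument.
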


A closely related computation of the invariant trace field $k\Gamma^+(P)$ via a determinant appears in Agol, Long, and Reid~\cite[Theorem 2.3]{ALR:BianchiGroups}.

\section{Quotient orbifolds and the arithmeticity of $DM$}
\label{Sec:Quotient orbifolds and arithmeticity}

In this section, we consider right-angled tiling links in $F \times I$, where $F$ has genus $g \geq 2$. 
In Corollary~\ref{Cor:SameTileCommens}, we prove if two links $L$ and $L'$ correspond to the same $[m,n,m,n]$ tiling pattern, the link exteriors cover a common orbifold and are therefore commensurable. In fact, 
the double of a link exterior $\Ext(L)$ covers a Coxeter orbifold $\orbP$ that depends only on the $[m,n,m,n]$ data of the tiling (Lemma~\ref{Lemma:PolyhedralQuotient}). 
In Lemma~\ref{Lem:Gram matrix}, we find the Gram matrices of the underlying polyhedra of the orbifold $\orbP$. Then, in Theorem~\ref{Thm:TilingArithmeticity}, we use Vinberg's criterion to determine precisely which doubled link exteriors are arithmetic. 

\subsection{Quotient orbifolds}
We begin by constructing quotient orbifolds of $\Ext(L)$ and its double.

\begin{prop}\label{Prop:OneCuspedQuotient}
Let $F$ be a surface of genus $g>1$. Let $L$ be a right-angled tiling link in $F \times I$ corresponding to an $[m, n, m, n]$ tiling $T$. Then $\Ext(L) = (F \times I) \setminus L$ covers an orientable orbifold $\orbO$ with a single cusp and with totally geodesic boundary. This orbifold can be obtained as $\orbO = ((\widetilde F \times I) \setminus \widetilde L)/G$,
    where $\widetilde L$ is the preimage of $L$ in $\widetilde F \times I$ and $G$ is the orientation-preserving and color-preserving symmetry group of the tiling $\widetilde T$ of $\widetilde F$.
\end{prop}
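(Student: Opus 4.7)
The strategy is to lift $\Ext(L)$ to the universal cover of $F$ and realize $\orbO$ as a quotient by a large discrete symmetry group. Since $g\geq 2$, the universal cover $p\from\widetilde F\to F$ is $\HH^2\to F$, and it induces a covering $q\from (\widetilde F\times I)\setminus\widetilde L \to \Ext(L)$ with deck group $\pi_1(F)$. First, I would pull back the Adams--Calderon--Mayer decomposition of $\Ext(L)$ into regular hyperbolic (generalized) bipyramids (one per tile of $T$) to obtain a $\pi_1(F)$--equivariant decomposition of $(\widetilde F\times I)\setminus\widetilde L$ into regular bipyramids indexed by tiles of the lifted tiling $\widetilde T$.

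Next, let $G$ denote the orientation-preserving, color-preserving symmetry group of $\widetilde T$. Since each $\gamma\in G$ permutes the tiles of $\widetilde T$ and each bipyramid is regular (hence has the full dihedral symmetry group of its base), $\gamma$ extends canonically and compatibly to an isometry of the entire bipyramidal decomposition, preserving cusps and the totally geodesic boundary. The deck group $\pi_1(F)$ sits inside $G$ because every deck transformation preserves the tiling, its orientation, and its checkerboard coloring, all pulled back from $F$. I then set $\orbO = ((\widetilde F\times I)\setminus\widetilde L)/G$; this is an orbifold covered by $\Ext(L)$. Orientability of $\orbO$ follows from $G$ being orientation-preserving, and its boundary is totally geodesic because the totally geodesic boundary upstairs is $G$--invariant.

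The central step is showing $\orbO$ has exactly one cusp, equivalently that $G$ acts transitively on vertices of $\widetilde T$. When $m\neq n$, every edge of $\widetilde T$ separates an $m$-gon from an $n$-gon, so the checkerboard coloring coincides with polygon type; every orientation-preserving symmetry of $\widetilde T$ is then automatically color-preserving, so $G$ is the full orientation-preserving symmetry group and transitivity follows from the regularity of $\widetilde T$. When $m=n$, the coloring is not determined by shape, so I would exhibit explicit color-preserving elements: the rotation by $2\pi/m$ about the center of any tile cyclically permutes its edge-neighbors (all of the single color opposite to that of the central tile) and so preserves the coloring, while acting transitively on the vertices of that tile. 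Since the $1$--skeleton of $\widetilde T$ is connected, chaining such rotations across tiles adjacent along an edge-path between any two vertices yields transitivity.

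The main anticipated obstacle is the vertex-transitivity verification when $m=n$, where one cannot simply appeal to the full orientation-preserving symmetry group and must produce color-preserving elements by hand. Everything else is essentially bookkeeping combining the ACM bipyramid decomposition with the standard orbifold quotient construction.
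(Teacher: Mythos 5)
Your overall framework (lift to $\HH^2$, let the color- and orientation-preserving group $G$ of $\widetilde T$ act on $(\widetilde F\times I)\setminus\widetilde L$, and take the quotient) matches the paper's, and your route to the $G$--action --- extending tiling symmetries through the regular bipyramid decomposition --- is a reasonable alternative to the paper's appeal to Mostow--Prasad rigidity. But the step you yourself identify as central contains a genuine error: having a single cusp is \emph{not} equivalent to $G$ acting transitively on the vertices of $\widetilde T$. The cusps of $\Ext(L)$ correspond to the components of $L$, so the cusps of $\orbO$ correspond to $G$--orbits of components (strands) of $\widetilde L$, not to orbits of crossings. Vertex-transitivity does not give strand-transitivity: the stabilizer of a vertex inside the color-preserving, orientation-preserving group is generated by the $\pi$--rotation about that vertex (the $\pi/2$--rotation either mixes $m$-gons with $n$-gons or swaps the checkerboard colors), and the $\pi$--rotation preserves \emph{each} of the two strands passing through the crossing, so knowing you can carry any vertex to any other does not let you carry one strand through a vertex to the other strand through it. The failure is not just formal: in the analogous $[4,4,4,4]$ picture, the group generated by translations and $\pi$--rotations about vertices is vertex-transitive but preserves the partition of strands into ``horizontal'' and ``vertical,'' so the corresponding quotient has two cusps. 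Thus your argument, as written, does not establish the one-cusp claim.

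The fix is to prove transitivity on strands directly, for instance via transitivity on \emph{edges}: each edge of $\widetilde T$ lies on a unique strand and each strand contains edges, so edge-transitivity suffices. This does hold for $G$: rotations about tile centers act transitively on the edges of a fixed tile, and the $\pi$--rotation about a vertex (which is orientation- and color-preserving, hence in $G$) carries a shaded tile at that vertex to the other shaded tile there, so one can chain between any two shaded tiles; alternatively, one can observe, as the paper does, that a fundamental domain for $G$ is the quadrilateral spanned by the centers of the two tiles adjacent to a single edge together with that edge's endpoints, so the translates of the fundamental domain are in bijection with edges, and the single cusp can then be read off from the explicit gluing of the two wedges lying over this quadrilateral (this explicit description is also what the paper reuses later to build the Coxeter quotient). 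With that replacement your outline goes through; the remaining points (orientability, totally geodesic boundary, the covering statement) are fine.
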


\begin{proof}
By Definition~\ref{Def:RightAngledLink},  $\pi(L)$ induces a regular tiling $T$ of the surface $F$ by regular $m$-gons and $n$-gons, arranged in a $[n, m, n, m]$ pattern about every vertex. The universal cover of this tiling is the unique tiling $\widetilde T$ of $\HH^2$, by regular $m$-gons and $n$-gons. 

 \begin{figure}%[ht]
    \centering
    \begin{overpic}[abs,unit=1mm,scale=.8, trim={0 4.25in 0 0}, clip]{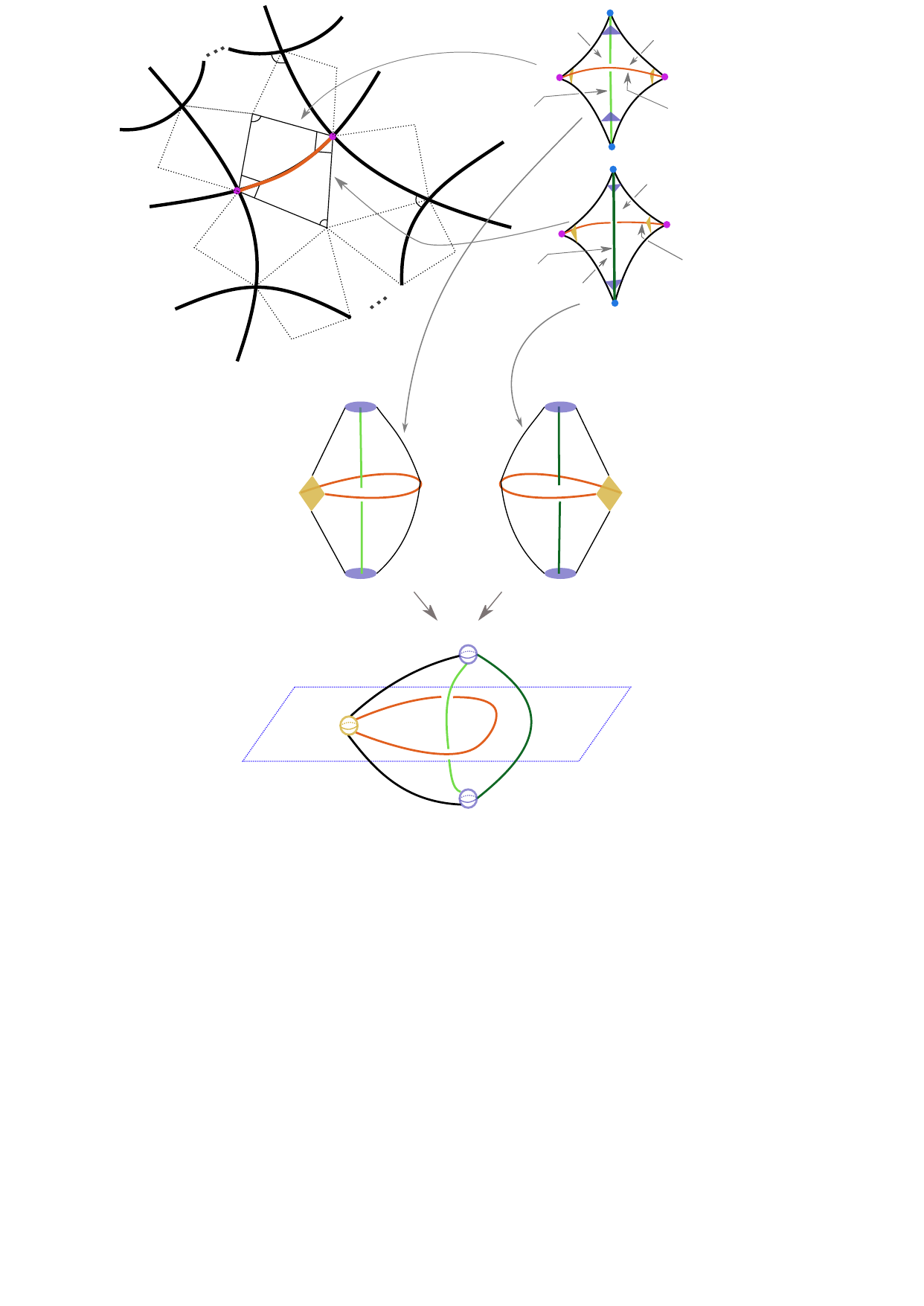}
    \put(47,137){$\alpha_n$}
    \put(70,112){$\alpha_m$}

    \put(102,145){$C$}
    \put(120,144){$C'$}
    \put(111,141){$A$}
    \put(111,132){$B$}
    \put(95,129){$\frac{2\pi}{n}$}
    \put(123,130){$\alpha_m$}

    \put(119,118){$A'$}
    \put(124,102){$\alpha_n$}
    \put(107,107){$D$}
    \put(112,106){$D'$}
    \put(95,100){$\frac{2\pi}{m}$}
    \put(104,96){$B'$}

    \put(55,70){$\alpha_n$}
    \put(55,52){$\alpha_n$}
    \put(69,67){$A$}
    \put(69,55){$B$}
    \put(77,61){$\alpha_m$}

    \put(109,70){$\alpha_m$}
    \put(109,52){$\alpha_m$}
    \put(95,67){$A'$}
    \put(95,55){$B'$}
    \put(86,61){$\alpha_n$}
    
    \put(66,26){$\pi$}
     \put(64,10){$\pi$}
     \put(70,20){$\pi$}
     \put(77,19){$\frac{2\pi}{n}$}
      \put(97,19){$\frac{2\pi}{m}$}
    \end{overpic}
    \caption{Top-left: an $[m,n,m,n]$ tiling with corresponding wedges sharing a horizontal edge shown in orange. The orientation preserving isometry group of the tiling $\widetilde{T}$ is generated by a $\frac{2\pi}{m}$ rotation about the center of the $m$-gon and a $\frac{2\pi}{n}$ rotation about the center of the $n$-gon, with a quadrilateral fundamental domain that has interior angles $\frac{2\pi}{m}$, $\frac{2\pi}{n}$, and $\frac{\pi}{2}$. Top-right: the two wedges in $\Ext(L)$ corresponding to this fundamental domain. Middle: the result of gluing each wedge to itself ($C$ to $C'$ and $D$ to $D'$). Bottom: gluing face $A$ to $A'$ and face $B$ to $B'$ produces the orbifold $\orbO$ in Proposition~\ref{Prop:OneCuspedQuotient}. The truncations of ideal vertices, which glue up to form the cusp cross-section of $\orbO$, appear in yellow. The truncation faces of the bipyramids, which glue up to form the totally geodesic boundary of $\orbO$, appear in blue.}
    \label{Fig:AngleCalcs}
\end{figure}

The symmetry group of $\widetilde T$ contains a rotation of angle $2\pi/m$ about the center of a regular $m$-gon and a rotation of angle $2\pi/n$ about the center of an adjacent $n$-gon. Together, these two rotations generate a group $G$ of isometries whose fundamental domain $Q$ is a quadrilateral with one corner in the center of the $m$-gon, one corner in the center of the $n$-gon, and two corners at vertices of $\widetilde T$. See the left panel of Figure~\ref{Fig:AngleCalcs}. 

Recall that the right-angled hyperbolic structure on $\Ext(L) = (F \times I) \setminus L$ is unique. Thus every orientation-preserving symmetry of $\widetilde T$ that preserves the colors of the two checkerboard surfaces determines an (orientation-preserving) symmetry of the pair $(\widetilde F \times I, \widetilde L)$ where $\widetilde L$ is the preimage of $L$. The color-preserving hypothesis ensures that over-crossings are sent to over-crossings, hence $\widetilde L$ is sent to $\widetilde L$.
Thus $G$ acts by orientation-preserving $3$-dimensional isometries on $(\widetilde F \times I) \setminus \widetilde L$, with quotient orbifold $\orbO = \big((\widetilde F \times I) \setminus \widetilde L \big) / G$.

We can now give a more explicit description of $\orbO$. Recall from Figure~\ref{Fig:Wedge} that every $n$-gon in the tiling $\widetilde T$ determines a regular ideal $n$--bipyramid in $(\widetilde F \times I) \setminus \widetilde L$, built as a union of wedges arranged around a stellating edge. The portion of the fundamental domain $Q$ in the $n$-gon determines a wedge in the $n$--bipyramid with two ideal vertices and two truncated vertices on $\widetilde F \times \bdy I$. Similarly, the portion of $Q$ in the $m$-gon determines a wedge in the corresponding $m$--bipyramid. See the top-right of Figure~\ref{Fig:AngleCalcs}. 

The symmetry group $G$ carries every wedge in every bipyramid to one of these two wedges. As a result, the gluing pattern for the bipyramidal decomposition of $\Ext(L)$ (which was described in \cite[Theorem 4.4]{ACM:K-uniform}) is modified in $\orbO$. In particular, since every wedge over an $n$-gon is taken to the top wedge of Figure~\ref{Fig:AngleCalcs} by the $G$-action, the top wedge acquires self-gluings along faces $C$ and $C'$. Similarly, the wedge over an $m$-gon acquires a self-gluing along faces $D$ and $D'$.

To see the construction of $\orbO$, we follow the gluing instructions of Figure~\ref{Fig:AngleCalcs}.
We first glue each wedge to itself, identifying face $C$ with $C'$ and face $D$ with $D'$ (see the middle row of Figure~\ref{Fig:AngleCalcs}). Next, we glue the two wedges to each other, identifying $A$ with $A'$ and $B$ with $B'$. The resulting orbifold $\orbO$ is shown in the bottom row of Figure~\ref{Fig:AngleCalcs}. 
\end{proof}

As an immediate consequence of Proposition~\ref{Prop:OneCuspedQuotient}, we obtain the easier direction of Theorem~\ref{Thm:MainCommensurability} for surfaces of genus $g \geq 2$. This result was previously established in \cite[Proposition 3]{RKK:RightAngled}.

\begin{corollary}
\label{Cor:SameTileCommens}
Let $L \subset (F \times I)$ and $L' \subset (F' \times I)$ be right-angled tiling links in thickened hyperbolic surfaces. If the tilings of $F$ and $F'$ lift to the same tiling of $\HH^2$, then $M = \Ext(L)$ and $M' = \Ext(L')$ are commensurable.
Consequently the doubles $DM$ and $DM'$ are also commensurable.
\end{corollary}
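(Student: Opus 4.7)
The plan is to apply Proposition~\ref{Prop:OneCuspedQuotient} to both links simultaneously, showing that $M$ and $M'$ cover a common orbifold $\orbO$ that depends only on the tiling $\widetilde T$ of $\HH^2$, and then invoke the finite-cover criterion in Remark~\ref{Rem:EasyCommensurability} (extended to the setting of totally geodesic boundary).

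First I would unpack the construction of $\orbO$ from Proposition~\ref{Prop:OneCuspedQuotient}. The three inputs to that construction are (i) the universal cover $\widetilde F = \HH^2$ of $F$, (ii) the preimage $\widetilde L \subset \HH^2 \times I$ of $L$, and (iii) the orientation-preserving, color-preserving symmetry group $G$ of the lifted tiling $\widetilde T$. By hypothesis, the tilings of $F$ and $F'$ share the same lift $\widetilde T$ of $\HH^2$, so the group $G$ is the same for both links. Since a right-angled tiling link is determined by its checkerboard-colored projection diagram (Definition~\ref{Def:TilingLink}), and $G$ is color-preserving, the two lifted links $\widetilde L$ and $\widetilde L'$ agree, up to the symmetry of interchanging $+I$ and $-I$, as sublinks of $\HH^2 \times I$. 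Thus Proposition~\ref{Prop:OneCuspedQuotient} produces the \emph{same} quotient orbifold $\orbO = ((\HH^2 \times I) \setminus \widetilde L)/G$ from both $L$ and $L'$, and both $M = \Ext(L)$ and $M' = \Ext(L')$ cover $\orbO$.

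Next I would verify that the covers $M \to \orbO$ and $M' \to \orbO$ are of finite degree. This holds because $F = \HH^2 / \pi_1(F)$ with $\pi_1(F) < G$ of finite index: since $F$ is compact and tiled by a finite number of copies of the fundamental quadrilateral $Q$ of $G$, the degree of the cover equals the (finite) number of such quadrilaterals in $F$. The same argument applies to $F'$. Applying Remark~\ref{Rem:EasyCommensurability}, in its generalized form for hyperbolic $3$--manifolds with totally geodesic boundary, yields that $M$ and $M'$ are commensurable.

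For the doubles, I would use that the doubling operation is compatible with covers: if $M \to \orbO$ is a finite cover of orbifolds with totally geodesic boundary, then the induced map $DM \to D\orbO$ is also a finite cover (obtained by doubling along the boundary, which lifts under the cover because boundary goes to boundary). Hence $DM$ and $DM'$ both finitely cover the common finite-volume orbifold $D\orbO$, so they are commensurable by Remark~\ref{Rem:EasyCommensurability}. I do not anticipate a genuine obstacle in this proof; the most delicate point is the assertion that $\widetilde L$ and $\widetilde L'$ are actually the same sublink of $\HH^2 \times I$, which rests on the color-preserving property of $G$ (guaranteeing consistent crossing information) together with the fact that Definition~\ref{Def:TilingLink} recovers the link from its checkerboard-colored projection.
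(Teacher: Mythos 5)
Your proposal is correct and takes essentially the same route as the paper: both show via Proposition~\ref{Prop:OneCuspedQuotient} that $M$ and $M'$ cover the common orbifold $\orbO = ((\widetilde F \times I)\setminus \widetilde L)/G$ determined by the shared tiling of $\HH^2$, then conclude commensurability from Remark~\ref{Rem:EasyCommensurability}. The only cosmetic difference is at the doubling step, where you pass to the doubled orbifold $D\orbO$ while the paper uses the silvered orbifold $\orbO^\pm$ of Definition~\ref{Def:Silvering}; these are interchangeable, since $D\orbO$ finitely covers $\orbO^\pm$.
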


\begin{proof}
By Proposition~\ref{Prop:OneCuspedQuotient}, $M$ and $M'$ cover one-cusped orbifolds $\orbO$ and $\orb'$, respectively. Furthermore,
\[
\orbO = ((\widetilde F \times I) \setminus \widetilde L)/G
\]
where $G$ is the orientation-preserving color-preserving symmetry group of the tiling $\widetilde T$ of $\widetilde F = \HH^2$. Since the tiling $T'$ of $F'$ also lifts to the same tiling $\widetilde T'$ of $\HH^2$, it has the same symmetry group, hence $\orbO = \orb'$. 
Since $M$ and $M'$ both cover $\orbO = \orb'$, Remark~\ref{Rem:EasyCommensurability} implies they are commensurable.

Following Definition~\ref{Def:Silvering}, let $\orb^\pm$ be the orbifold obtained by silvering the totally geodesic boundary of $\orbO = \orb'$. Then $DM$ and $DM'$ both cover $\orb^\pm$, hence they are commensurable.
\end{proof}

Next, we construct an even simpler orbifold quotient of $DM$.

\begin{lemma}\label{Lemma:PolyhedralQuotient}
Let $F$ be a surface of genus $g>1$. Let $L$ be a right-angled tiling link in $F \times I$ corresponding to an $[m, n, m, n]$ tiling $T$. Let $DM$ be the double of 
%$M = (F \times I) \setminus L$
$M= \Ext(L)$. Then $DM$ covers a Coxeter orbifold $\orbP = \HH^3 / \Gamma(P)$, based on the polyhedron $P$ shown in Figure~\ref{Fig:DMReflectionQuotient}. Furthermore, $\bdy M$ covers the triangular face $F_6$.
\end{lemma}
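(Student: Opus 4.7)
The plan is to augment the quotient from Proposition~\ref{Prop:OneCuspedQuotient} with further reflection symmetries until the full covering group becomes a Kleinian reflection group. As a first step, applying the doubling procedure to Proposition~\ref{Prop:OneCuspedQuotient} shows that $DM$ covers $D\orbO$, and the silvering construction of Definition~\ref{Def:Silvering} realizes $D\orbO$ as a $\mathbb{Z}/2$--cover of $\orbO^\pm$, the orbifold obtained from $\orbO$ by silvering its totally geodesic boundary. Thus $DM$ covers $\orbO^\pm$, in which $\bdy M$ has become a single mirror face.

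Next, I enlarge the covering by including further reflection symmetries already present in $\Ext(L)$. Two natural sources are available. First, by Theorem~\ref{Thm:RGCR}, both checkerboard surfaces of $\pi(L)$ are totally geodesic in $M$, so reflection in either one is an orientation-reversing isometry of $M$ and hence of $DM$. Second, the tiling $\widetilde T$ of $\HH^2$ has color-preserving, orientation-reversing symmetries -- namely reflections across the mirror lines of each regular polygon passing through its center -- whose lifts to $\widetilde F \times I$ fix the $I$--coordinate and descend to reflection isometries of $M$ and $DM$. Appending these reflections to the generators of $\pi_1(\orbO^\pm)$ produces a group $\Gamma$ in which every rotation of order $k$ arising in Proposition~\ref{Prop:OneCuspedQuotient} is expressed as the product of two reflections in planes meeting at angle $\pi/k$. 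Consequently $\Gamma$ is a Kleinian reflection group, and the quotient $\HH^3/\Gamma = \orbP$ is a Coxeter orbifold (Definition~\ref{Def:CoxeterPoly}) covered by $DM$.

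A fundamental polyhedron $P$ for $\Gamma$ can then be described by subdividing the wedge-based fundamental region of $\orbO$ (Proposition~\ref{Prop:OneCuspedQuotient} and Figure~\ref{Fig:AngleCalcs}) along the mirror planes introduced by the added reflections: a vertical plane along a mirror line of one of the adjacent regular polygons, the horizontal plane at the midsection of the bipyramids (coming from the checkerboard-surface reflection), and the silvering plane along $\bdy M$. The resulting polyhedron should have exactly the six faces shown in Figure~\ref{Fig:DMReflectionQuotient}. The face $F_6$, the image of $\bdy M$, is triangular because the vertical mirror plane together with the midsection plane slices the original polygonal truncation face of the wedge into a triangular fundamental domain for the induced reflection action on $\bdy M$. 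The main obstacle is the combinatorial bookkeeping: choosing a minimal generating set of reflections, verifying that the resulting $P$ has precisely six faces with the dihedral angles displayed in Figure~\ref{Fig:DMReflectionQuotient}, and confirming that no further identifications are imposed on $P$ by the group $\Gamma$.
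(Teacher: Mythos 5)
Your first step and overall skeleton match the paper: $DM$ covers the silvered orbifold $\orbO^\pm$, and one then quotients by two further perpendicular reflections to reach the Coxeter orbifold $\orbP$ of Figure~\ref{Fig:DMReflectionQuotient}, with $\bdy M$ descending to the triangular truncation face $F_6$. The crucial difference is where the extra reflections live. The paper finds them as symmetries of the small orbifold $\orbO^\pm$ itself -- an explicit object built from two wedges with the gluings of Figure~\ref{Fig:AngleCalcs} -- where compatibility with the face identifications can be checked directly, and then simply composes orbifold covers $DM \to \orbO^\pm \to \orbP$. Nothing is required to be a symmetry of $M$ or of $DM$.

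Your proposal instead asserts that the reflections exist upstairs, and both assertions are problematic. First, the claim that reflection in a checkerboard surface is an isometry of $M$ does not follow from the surface being totally geodesic (Theorem~\ref{Thm:RGCR}): total geodesy gives a local reflection, but a global isometry of $M$ fixing the surface pointwise is a genuine symmetry hypothesis that you neither prove nor need. Second, a mirror-line reflection $h$ of the tiling, extended by the identity on the $I$--coordinate, does \emph{not} preserve $\widetilde L$: since $h$ reverses the orientation of $F$, it reverses the clockwise convention in Definition~\ref{Def:TilingLink} and hence switches every crossing. To preserve the link one must compose with $t \mapsto -t$, and the resulting isometry has one-dimensional fixed set, i.e.\ it is a $\pi$--rotation rather than a reflection. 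So the generating reflections of your group $\Gamma$ are not established, and with them the claims that $\Gamma$ is a discrete reflection group and that $\orbP = \HH^3/\Gamma$ is a Coxeter orbifold. (The ``combinatorial bookkeeping'' you defer -- six faces, the dihedral angles of Figure~\ref{Fig:DMReflectionQuotient}, no extra identifications -- is exactly what the paper reads off from its explicit two-wedge construction, but the primary gap is the unjustified symmetries of $M$ and $DM$.) The fix is to follow the paper's route: verify the horizontal and vertical reflection symmetries on $\orbO^\pm$ directly, where the color/crossing discrepancy that obstructs them upstairs has already been absorbed by the quotient group $G$.
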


\begin{proof}
Recall that $DM$ covers the silvered orbifold $\orbO^\pm$. This orbifold is depicted in the bottom of Figure~\ref{Fig:AngleCalcs} and reproduced in the top of 
Figure~\ref{Fig:DMReflectionQuotient}. As Figure~\ref{Fig:DMReflectionQuotient} shows, $\orbO^\pm$ is symmetric with respect to a horizontal plane (drawn in blue) and a vertical plane corresponding to the plane of the screen.
Reflecting in both planes produces the orbifold $\orbP$ shown in the middle two panels two of Figure~\ref{Fig:DMReflectionQuotient}. In particular, the third panel shows $\orbP$ is a Coxeter orbifold.
\end{proof}

\begin{figure}
    \begin{overpic}[abs,unit=1mm,scale=.65, trim={0in 6in 0 0in}, clip]{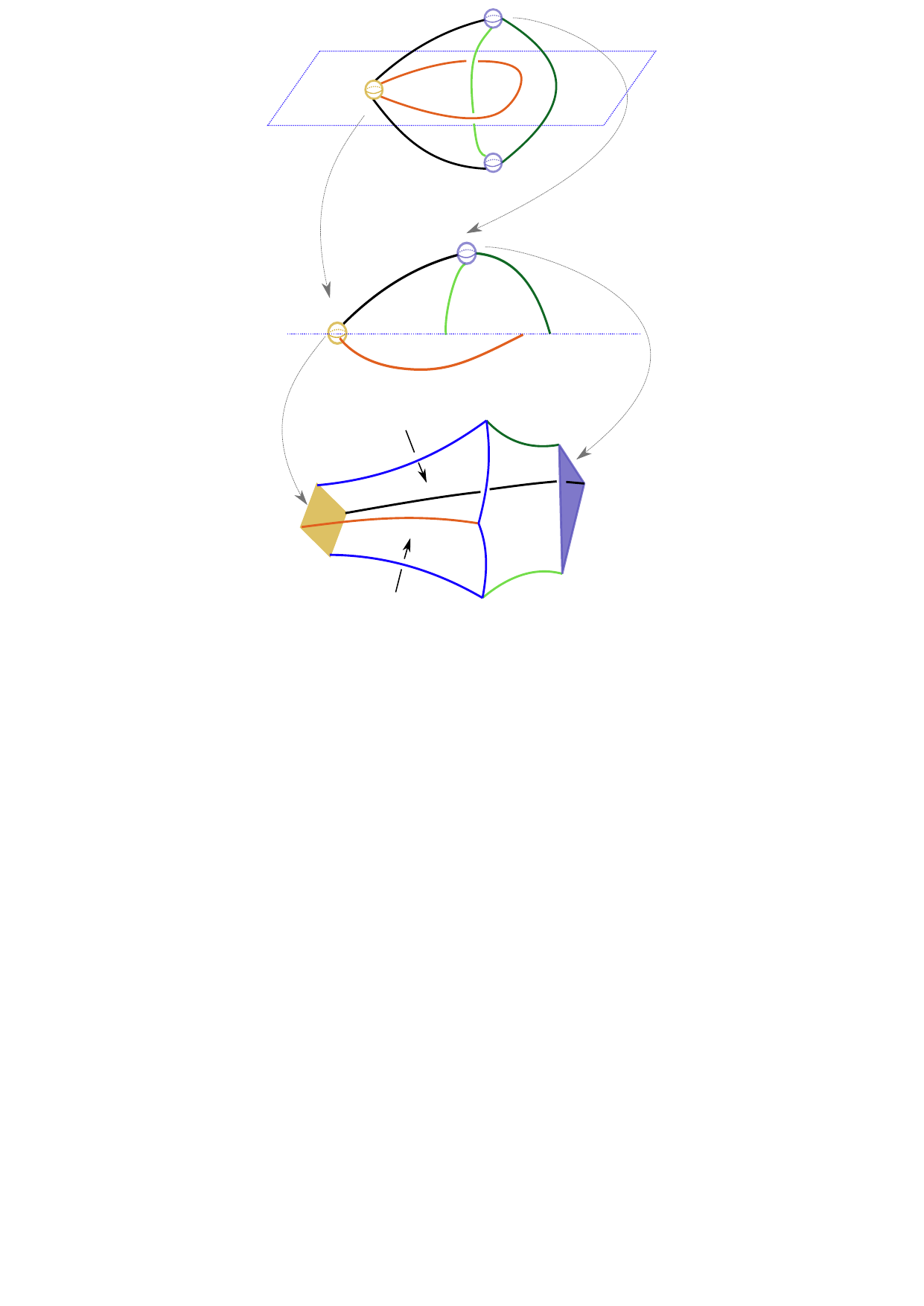}
     \put(60,88){$\pi$}
     \put(60,70){$\pi$}
     \put(62,82){$\pi$}
     \put(66,79){$\frac{2\pi}{n}$}
      \put(83,79){$\frac{2\pi}{m}$}
      
        \put(55,52){$2$}
         \put(80,51){$m$}
        \put(63,46){$n$}
         \put(60,36){$2$}
         \put(43,40){$2$}

        \put(50,24){$2$}
         \put(50,8){$2$}
         \put(54,14){$2$}
         \put(65,18){$2$}
         \put(73,23){$2$}
          \put(72,10){$2$}
          \put(77,29){$m$}
          \put(78,6){$n$}

          \put(85,13){$F_6$}
          \put(75,17){$F_1$}
          \put(55,30){$F_2$}
          \put(54,3){$F_3$}
          \put(63,11){$F_4$}
          \put(65,24){$F_5$}

     \end{overpic}

     \begin{tikzcd}
    F_4 \arrow[r,-,dashed, "\ell_{46}"] 
    \arrow[d,-,"\infty"] & F_6 \arrow[r,-,dashed, "\ell_{56}"] & F_5 \arrow[d,-,"\infty"] \\
    F_2 \arrow[r,-,"m"] & F_1 \arrow[r,-,"n"] & F_3
\end{tikzcd}

\caption{Constructing the reflection orbifold $\orbP$ as a quotient of $DM$. We begin with $\orbO^{\pm}$. We then quotient by two reflections in perpendicular planes: the dotted blue horizontal plane and the plane of the screen. The result is a Coxeter polyhedron $P$ with faces labeled $F_1$ through $F_6$. 
The final panel of the figure shows the Coxeter diagram of $P$.
\label{Fig:DMReflectionQuotient}}
\end{figure}

\begin{remark}\label{Rem:WedgeCount}
Tracing through the gluing steps in Figure~\ref{Fig:AngleCalcs} and the quotient in Figure~\ref{Fig:DMReflectionQuotient} shows that the Coxeter polyhedron $P$ consists of one quarter of an $m$--wedge and one quarter of an $n$--wedge.    
\end{remark}

We will eventually show that in a typical scenario, 
the polyhedral reflection orbifold $\orbP$ is the unique smallest orbifold in the commensurability class of $DM = D\Ext(L)$. More precisely, this holds whenever $L$ corresponds to an $[m,n,m,n]$ tiling for $m \neq n$, and furthermore $DM$ is non-arithmetic. See Corollary~\ref{Cor:MinimalOrbifold}. However, proving this will require tools that we do not yet have, specifically the canonical polyhedral decomposition of $DM$. 

\subsection{Gram matrices and arithmeticity}
Our next goals are to characterize the arithmeticity of $\orbP$ (Theorem~\ref{Thm:TilingArithmeticity})  and compute its invariant trace field (Corollary~\ref{Cor:InvariantTraceFields}). We will do both by computing the Gram matrix of $P$  
in terms of the parameters $m$ and $n$. We thank Nikolay Bogachev for his assistance with the computation.

\begin{lemma}
\label{Lem:Gram matrix}
Let $P$ be the Coxeter polyhedron of Lemma~\ref{Lemma:PolyhedralQuotient}.
Then the Gram matrix of $P$ is
\[
G(P) = \begin{bmatrix}
        2 & -2\cos(\pi/m) & -2\cos(\pi/n) &0 &0 &0\\
        -2\cos(\pi/m) & 2 & 0 & -2 & 0 & 0\\
        -2\cos(\pi/n) & 0 & 2 & 0 &-2 & 0\\
        0 & -2 & 0 & 2 & 0 & -2 C_{m,n}\\
        0 & 0 & -2 & 0 & 2 & -2C_{n,m}\\
        0 & 0 & 0 & -2C_{m,n} & -2C_{n,m} & 2\\
    \end{bmatrix}
\]
Furthermore, the terms $C_{m,n}$ and $C_{n,m}$ can be computed as follows:
\[
C_{m,n} = \frac{\cos(\pi/m)}{\sqrt{\cos(\pi/m)^2 + \cos(\pi/n)^2 - 1}},
\qquad
C_{n,m} = \frac{\cos(\pi/n)}{\sqrt{\cos(\pi/m)^2 + \cos(\pi/n)^2 - 1}},
\]
\end{lemma}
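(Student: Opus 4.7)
The plan is to read the Gram matrix directly off the Coxeter diagram of $P$ (bottom panel of Figure~\ref{Fig:DMReflectionQuotient}) using Definition~\ref{Def:GramMatrix}. The diagonal entries are $a_{ii}=2$, and the solid edges of the diagram immediately give $a_{12}=-2\cos(\pi/m)$, $a_{13}=-2\cos(\pi/n)$, $a_{24}=-2$, and $a_{35}=-2$, while every pair of faces not joined by any edge meets at a right angle and contributes $0$. This determines every entry of the stated matrix except the two coming from the dashed edges, namely $a_{46}=-2\cosh(\ell_{46})=-2C_{m,n}$ and $a_{56}=-2\cosh(\ell_{56})=-2C_{n,m}$. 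The task reduces to computing $\cosh(\ell_{46})$ and $\cosh(\ell_{56})$ in closed form.

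Rather than attempting direct hyperbolic trigonometry on $P$, I would exploit the structural constraint of Remark~\ref{Rem:MinkowskiGram}: because $P$ has finite volume, its six outward unit normals span $\R^{4}$, so $G(P)$ has rank exactly $4$. Consequently every $5\times 5$ principal minor of $G(P)$ is singular. The minor obtained by deleting row and column $5$ contains $C_{m,n}$ as its only unknown; expanding along its last row (which has only two nonzero entries) and then along the zero-rich rows of the resulting $4\times 4$ cofactors reduces everything to two small $3\times 3$ determinants and yields the quadratic relation
\[
C_{m,n}^{2}\bigl(\cos^{2}(\pi/m)+\cos^{2}(\pi/n)-1\bigr) \;=\; \cos^{2}(\pi/m).
\]
The positive root (forced by $\cosh\ell_{46}>0$) is exactly the formula in the statement. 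The same argument applied to the minor obtained by deleting row and column $4$ yields $C_{n,m}$, and indeed the interchange $\{F_2,F_4\}\leftrightarrow\{F_3,F_5\}$, $m\leftrightarrow n$ is a symmetry of the Coxeter diagram, so the two formulas are related by swapping $m$ and $n$.

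A side check needed along the way is that the radicand is strictly positive, so that the square roots are real. Via $\cos^{2}\theta=\tfrac{1}{2}(1+\cos 2\theta)$, the inequality $\cos^{2}(\pi/m)+\cos^{2}(\pi/n)>1$ is equivalent to $\cos(2\pi/m)+\cos(2\pi/n)>0$, and hence to $\tfrac{1}{m}+\tfrac{1}{n}<\tfrac{1}{2}$, which is the precise condition that the $[m,n,m,n]$ tiling be hyperbolic. This holds because $F$ has genus $g\geq 2$. The hardest part of the proof will simply be carrying out the determinant expansion without algebraic errors; a direct computation of $d(F_{4},F_{6})$ from explicit coordinates on $P$ in the upper half-space model would be considerably more painful than the rank-$4$ argument sketched above.
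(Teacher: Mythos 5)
Your proposal is correct and follows essentially the same route as the paper: read the entries of $G(P)$ off the Coxeter diagram via Definition~\ref{Def:GramMatrix}, then use the rank--$4$ property of the Gram matrix (Remark~\ref{Rem:MinkowskiGram}) to force the $5\times 5$ minors obtained by deleting row/column $5$ (resp.\ $4$) to be singular, and solve the resulting relation for $C_{m,n}$ (resp.\ $C_{n,m}$). Your quadratic relation $C_{m,n}^{2}\bigl(\cos^{2}(\pi/m)+\cos^{2}(\pi/n)-1\bigr)=\cos^{2}(\pi/m)$ is the correct one (the paper's displayed determinant has a sign typo but leads to the same conclusion), and your positivity check of the radicand via $\tfrac1m+\tfrac1n<\tfrac12$ is a sensible addition.
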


\begin{proof}
The Coxeter diagram of $P$ is shown in Figure~\ref{Fig:DMReflectionQuotient}. Since faces that meet at a $\pi/2$ angle do not lead to an edge of the diagram, the edges form a $6$--cycle as shown in the figure. Two of the edges, corresponding to the ultraparallel pairs of faces $(F_4,F_6)$ and $(F_5,F_6)$, are dashed.
%=
Following Definition~\ref{Def:GramMatrix}, we use the Coxeter diagram to build the Gram matrix $G(P)$:
\[
G(P) = \begin{bmatrix}
        2 & -2\cos(\pi/m) & -2\cos(\pi/n) &0 &0 &0\\
        -2\cos(\pi/m) & 2 & 0 & -2 & 0 & 0\\
        -2\cos(\pi/n) & 0 & 2 & 0 &-2 & 0\\
        0 & -2 & 0 & 2 & 0 & -2\cosh(\ell_{46})\\
        0 & 0 & -2 & 0 & 2 & -2\cosh(\ell_{56})\\
        0 & 0 & 0 & -2\cosh(\ell_{46}) & -2\cosh(\ell_{56}) & 2\\
    \end{bmatrix}
\]

It remains to compute the values $C_{m,n} = \cosh(\ell_{46})$ and $C_{n,m} = \cosh(\ell_{56})$ corresponding to the ultraparallel pairs of faces of $P$.
To this end, recall from \cite[Page 323]{MR:Arithmetic} that $G(P)$ has rank $4$. Thus the minor matrix $H$ obtained by removing the fifth row and column must have determinant $\det(H) = 0$. This determinant is
\[
0 = \det(H) = 2^5 \left[\cos(\pi/m)^2 + (\cosh \ell_{46})^2 \big(\cos(\pi/m)^2+ \cos(\pi/n)^2-1 \big) \right],
\]
Solving for $\cosh (\ell_{46})$ produces the value claimed in the lemma statement. 

The value of  
$C_{n,m} = \cosh(\ell_{56})$ is computed in exactly the same manner, using the determinant of the minor matrix obtained by removing the fourth row and column of $G(P)$.
\end{proof}

\begin{remark}
Due to the presence of right angles in $P$, each of the distances $\ell_{46}$ and $\ell_{56}$ is the length of an edge of $P$. Accordingly, these distances can be computed using purely geometric techniques, by a repeated application of the hyperbolic law of cosines. Having performed that calculation, we find the determinants of minors of $G(P)$ to be quicker and more straightforward.
\end{remark}

Plugging in the values $\cos(\pi/4) = \sqrt{2}/2$ and $\cos(\pi/6) = \sqrt{3}/2$ yields the following corollary of Lemma \ref{Lem:Gram matrix}.

\begin{corollary}\label{Cor:OurGram}
Let $P$ be the Coxeter polyhedron of Lemma~\ref{Lemma:PolyhedralQuotient}.
For $(m,n)=(6,4)$, the Gram matrix is 
\begin{equation} \label{Eqn:64Gram}
    G(P_{6,4}) = \begin{bmatrix}
        2 & -\sqrt{3} & -\sqrt{2} &0 &0 &0\\
        -\sqrt{3} & 2 & 0 & -2 & 0 & 0\\
        -\sqrt{2} & 0 & 2 & 0 &-2 & 0\\
        0 & -2 & 0 & 2 & 0 & -2\sqrt{3}\\
        0 & 0 & -2 & 0 & 2 & -2\sqrt{2}\\
        0 & 0 & 0 & -2\sqrt{3} & -2\sqrt{2} & 2\\
    \end{bmatrix}
\end{equation}
For $(m,n)=(6,6)$ the Gram matrix is 
\begin{equation}\label{Eqn:66Gram}
     G(P_{6,6}) = \begin{bmatrix}
        2 & -\sqrt{3} & -\sqrt{3} &0 &0 &0\\
        -\sqrt{3} & 2 & 0 & -2 & 0 & 0\\
        -\sqrt{3} & 0 & 2 & 0 &-2 & 0\\
        0 & -2 & 0 & 2 & 0 & -\sqrt{6}\\
        0 & 0 & -2 & 0 & 2 & -\sqrt{6}\\
        0 & 0 & 0 & -\sqrt{6} & -\sqrt{6} & 2\\
    \end{bmatrix}
\end{equation}
\end{corollary}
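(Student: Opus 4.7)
This corollary is a direct evaluation of the general Gram matrix of Lemma \ref{Lem:Gram matrix} at the two listed parameter pairs. Substituting $\cos(\pi/4) = \sqrt{2}/2$ and $\cos(\pi/6) = \sqrt{3}/2$ immediately fills in the entries corresponding to dihedral angles between intersecting faces (those coming from edges of the Coxeter diagram labeled $m$ or $n$), so the only content left is to evaluate the closed-form hyperbolic-distance quantities $C_{m,n}$ and $C_{n,m}$ in each case.

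The plan is to first compute the common denominator $\sqrt{\cos(\pi/m)^2 + \cos(\pi/n)^2 - 1}$ appearing in both formulas of Lemma \ref{Lem:Gram matrix}, and then to divide the two numerators $\cos(\pi/m)$ and $\cos(\pi/n)$ by it. For $(m,n) = (6,4)$ the radicand is $3/4 + 1/2 - 1 = 1/4$, so the denominator equals $1/2$; this yields $C_{6,4} = \sqrt{3}$ and $C_{4,6} = \sqrt{2}$, and hence the $(4,6)$ and $(5,6)$ entries of $G(P_{6,4})$ are $-2\sqrt{3}$ and $-2\sqrt{2}$. For $(m,n) = (6,6)$ the radicand is $3/4 + 3/4 - 1 = 1/2$, so the denominator equals $\sqrt{2}/2$; this yields $C_{6,6} = \sqrt{6}/2$, so the corresponding off-diagonal entries reduce to $-\sqrt{6}$. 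Substituting these values into the template of Lemma \ref{Lem:Gram matrix} reproduces \eqref{Eqn:64Gram} and \eqref{Eqn:66Gram} respectively.

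There is no genuine obstacle here; the corollary is a routine specialization of Lemma \ref{Lem:Gram matrix}. The reason to isolate these two Gram matrices is presumably forward-looking: they will serve as the explicit inputs to Vinberg's criterion (Theorem \ref{Thm:Vinberg}) and to the invariant-trace-field computation of Corollary \ref{Cor:PolyhedralInvariantTrace}, in order to confirm that $[6,4,6,4]$ and $[6,6,6,6]$ are precisely the arithmetic cases on surfaces of genus $g \geq 2$ promised by Theorem \ref{Thm:MainArithmeticity}. The main intellectual work is deferred to that subsequent application; here one need only verify that every entry in the two displayed matrices lies in $\Q(\sqrt{2},\sqrt{3},\sqrt{6})$, which is immediate from the simplifications above.
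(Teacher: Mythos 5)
Your proposal is correct and matches the paper's approach exactly: the paper also obtains the corollary by plugging $\cos(\pi/6)=\sqrt{3}/2$ and $\cos(\pi/4)=\sqrt{2}/2$ into the general Gram matrix of Lemma~\ref{Lem:Gram matrix}, with your explicit evaluation of $C_{m,n}$ and $C_{n,m}$ (radicands $1/4$ and $1/2$, giving $-2\sqrt{3}$, $-2\sqrt{2}$ and $-\sqrt{6}$) being exactly the routine simplification the paper leaves implicit.
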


We can next calculate the invariant trace fields of the doubled link exteriors corresponding to the $[6,4,6,4]$ and $[6,6,6,6]$ tilings, following the procedure described immediately after Theorem \ref{Thm:PolyhedralDiscriminant}.

\begin{corollary}\label{Cor:InvariantTraceFields}
The invariant trace fields of the link exteriors corresponding to the $[6,4,6,4]$ and $[6,6,6,6]$ tilings are $\Q(i\sqrt{6})$ and $\Q(i)$, respectively.
\end{corollary}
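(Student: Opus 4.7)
The plan is to apply Corollary~\ref{Cor:PolyhedralInvariantTrace} directly to the Gram matrices in Corollary~\ref{Cor:OurGram}. First I would identify the adjoint trace field $k(P)$ in each case. Since the Coxeter diagram of $P$ in Figure~\ref{Fig:DMReflectionQuotient} is a $6$--cycle, the only closed paths producing nonzero cyclic products $b_I$ are back-and-forth paths (which give squares of entries $a_{ij}$, hence rationals since each $a_{ij}$ is $\pm\sqrt{2}, \pm\sqrt{3}, \pm\sqrt{6}$, or $\pm 2$) and traversals of the full $6$--cycle. A quick computation of the $6$--cycle product gives $192$ in the $(6,4)$ case and $72$ in the $(6,6)$ case, both rational. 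Thus $k(P) = \Q$ in both cases, and it remains to compute $\sqrt{\det G'(P)}$.

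Next, following Remark~\ref{Rem:PathVectors}, I would fix $F_1$ as basepoint in the Coxeter diagram and compute the scalars $c_r$ (products of $a_{ij}$ along a chosen path) such that $\vv_r = c_r \ee_r$. Specifically: $c_1=1$, $c_2=a_{12}$, $c_3=a_{13}$, $c_4=a_{12}a_{24}$, $c_5=a_{13}a_{35}$, and $c_6=a_{12}a_{24}a_{46}$. Since $\ee_1,\ldots,\ee_6$ must span $\R^4$ and generic subsets are in fact bases, I would take $\{\vv_1,\vv_2,\vv_3,\vv_4\}$ as a $\Q$--basis for $M(P)$, which can be verified a posteriori by the non-vanishing of $\det G'(P)$. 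The entries of $G'(P)$ are then $\langle \vv_i, \vv_j\rangle = c_i c_j\, a_{ij}/2$, which are all rational (the irrationalities in $c_i, c_j, a_{ij}$ conspire to cancel in pairs). This is the key reason the construction works: passing from $\ee_i$ to $\vv_i$ rationalizes the Gram matrix.

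Having assembled the $4 \times 4$ rational matrix $G'(P)$, I would compute its determinant. For $(m,n)=(6,4)$, the resulting matrix has the form
\[
G'(P_{6,4}) = \begin{bmatrix} 1 & 3/2 & 1 & 0 \\ 3/2 & 3 & 0 & 6 \\ 1 & 0 & 2 & 0 \\ 0 & 6 & 0 & 12 \end{bmatrix},
\]
with $\det G'(P_{6,4}) = -54$, so $k\Gamma^+(P_{6,4}) = \Q(\sqrt{-54}) = \Q(i\sqrt{6})$. For $(m,n)=(6,6)$, an analogous computation yields a determinant of $-81$, so $k\Gamma^+(P_{6,6}) = \Q(\sqrt{-81}) = \Q(i)$.

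The main potential obstacle is the bookkeeping: the path-vector scalars $c_r$ are only well-defined up to multiplication by elements of $k(P)$, and in principle a poor choice of basis could produce a spanning failure or an inconvenient determinant. Both concerns are controlled by the fact that $\det G'(P)$ is well-defined up to squares in $k(P) = \Q$, so any nonzero answer determines the same quadratic extension. I would verify the basis choice works by checking $\det G'(P) \neq 0$ at the end, which both computations confirm.
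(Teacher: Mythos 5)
Your proposal is correct and takes essentially the same route as the paper: it applies Corollary~\ref{Cor:PolyhedralInvariantTrace} with the same basis $\{\vv_1,\vv_2,\vv_3,\vv_4\}$ (rescaled by rationals), and your determinants $-54$ and $-81$ differ from the paper's $-3456$ and $-5184$ only by rational squares, so they yield the same fields $\Q(i\sqrt{6})$ and $\Q(i)$. One immaterial slip: the full six-cycle product in the $(6,4)$ case is $96$, not $192$, though only its rationality is used.
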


\begin{proof}
We adopt the notation of Remark~\ref{Rem:PathVectors} and Corollary~\ref{Cor:PolyhedralInvariantTrace}. For each Coxeter polyhedron $P$, the vector space $M(P)$ spanned by all the vectors $\vv_i$ admits the basis $\{ \vv_1, \vv_2, \vv_3, \vv_4\}$ with $\vv_1= a_{11}\ee_1$ and $\vv_2=a_{12}\ee_2$ and $ \vv_3=a_{13}\ee_3$ and $\vv_4=a_{12}a_{24}\ee_4$. Now, we can compute the matrices $G'(P)$ whose entries are $\langle \vv_i, \vv_j \rangle$. 
    
For the $[6,4,6,4]$ tiling, we have $\vv_1=2\ee_1 $ and $ \vv_2=-\sqrt{3}\ee_2 $ and $\vv_3=-\sqrt{2}\ee_3$ and $\vv_4=(2\sqrt{3})\ee_4$, Therefore 
\[
G'(P_{6,4})=
    \begin{bmatrix}
    8 & 6 & 2\sqrt{6}&0\\
    6&6&0&12\\
    2\sqrt{6}&0&4&0\\
    0&12&0&24
    \end{bmatrix}.
\]
This matrix has  $\det(G'(P_{64})=-3456$. Thus, by Corollary~\ref{Cor:PolyhedralInvariantTrace}, the invariant trace field for $(m,n)=(6,4)$ is $k\Gamma^+(P_{6,4})(\sqrt{-3456})=\Q(24\sqrt{-6})=\Q(i\sqrt{6})$.
    
For the $[6,6,6,6]$ tiling, we have $\vv_1=2\ee_1 $ and $ \vv_2=-\sqrt{3}\ee_2 $ and $ \vv_3=-\sqrt{3}\ee_3$ and $\vv_4=(2\sqrt{3})e_4.$ Thus 
\[
G'(P_{6,6})=
    \begin{bmatrix}
    8 & 6 & 6&0\\
    6&6&0&12\\
    6&0&6&0\\
    0&12&0&24
    \end{bmatrix}.
\]
This matrix has $\det(G'(P_{6,6})=-5184.$
Therefore, the invariant trace field for for $(m,n)=(6,6)$ is $k\Gamma^+(P_{6,6})(\sqrt{-5184})=\Q(72\sqrt{-1})=\Q(i)$.
\end{proof}

We can now prove the following result, which forms the heart of Theorem~\ref{Thm:MainArithmeticity}.

\begin{theorem}\label{Thm:TilingArithmeticity}
Let $F$ be a surface of genus $g \geq 2$. Let $M = \Ext(L)$ be the exterior of a right-angled tiling link in $F \times I$, corresponding to a $[m,n,m,n]$ tiling. Then the double $DM$ is arithmetic if and only if $(m,n) = (6,4)$ or $(m, n) = (6,6)$.
\end{theorem}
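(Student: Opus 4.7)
The plan is to apply Vinberg's arithmeticity criterion (Theorem~\ref{Thm:Vinberg}) to the Coxeter polyhedron $P = P_{m,n}$ of Lemma~\ref{Lemma:PolyhedralQuotient}. Since $DM$ covers $\orbP = \HH^3/\Gamma(P)$, it is commensurable with $\orbP$ by Remark~\ref{Rem:EasyCommensurability}, and arithmeticity is a commensurability invariant. Thus it suffices to decide when $\Gamma(P)$ is arithmetic, which Vinberg's criterion reduces to two conditions on the entries of the Gram matrix $G(P)$ computed in Lemma~\ref{Lem:Gram matrix}.

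First I would extract the sharpest necessary condition from Vinberg's rationality clause (b). The Coxeter diagram of $P$ in Figure~\ref{Fig:DMReflectionQuotient} is a single $6$--cycle, so the nonzero cyclic products $b_I$ are exactly the six length-$2$ products $a_{ij}^2$ (one per edge) and the single $6$--cycle product $b$. The two edges incident to $F_1$ give $a_{12}^2 = 4\cos^2(\pi/m)$ and $a_{13}^2 = 4\cos^2(\pi/n)$. Rationality of these entries forces $\cos^2(\pi/m), \cos^2(\pi/n) \in \Q$; using $\cos^2(\pi/k) = (1+\cos(2\pi/k))/2$, Niven's theorem then implies $m, n \in \{2, 3, 4, 6\}$. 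Combined with $m, n \geq 3$ and the hyperbolicity constraint $(m-2)(n-2) > 4$ (necessary for $F$ to have genus $g \geq 2$ when the $[m,n,m,n]$ tiling is realized), the only surviving candidates, up to the $[m,n,m,n]$--vs--$[n,m,n,m]$ symmetry, are $(m,n) = (6,4)$ and $(m,n) = (6,6)$. All other right-angled hyperbolic tilings already fail condition (b), so the corresponding $DM$ are non-arithmetic.

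For the converse, I would verify both Vinberg conditions in the two surviving cases using the explicit matrices $G(P_{6,4})$ and $G(P_{6,6})$ recorded in Corollary~\ref{Cor:OurGram}. Condition (a) is immediate: the nontrivial off-diagonal entries are $-2, -\sqrt{2}, -\sqrt{3}, -\sqrt{6}, -2\sqrt{2}, -2\sqrt{3}$, all algebraic integers. For condition (b) it only remains to check the single $6$--cycle product: a direct computation gives $b = 96$ for $(m,n) = (6,4)$ and $b = 72$ for $(m,n) = (6,6)$, both rational, while the length-$2$ products $a_{ij}^2$ are visibly rational. Hence both surviving cases yield arithmetic $\Gamma(P)$.

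The main conceptual step is recognizing that the simple $6$--cycle structure of the Coxeter diagram reduces condition (b) to a short finite check, while the essential number-theoretic input is Niven's constraint on rational values of $\cos(2\pi/k)$, which does the heavy lifting of ruling out all but the two pairs $(m,n)$. The remaining arithmetic verifications are immediate from Corollary~\ref{Cor:OurGram}, so no delicate field-theoretic estimate is required beyond Vinberg's criterion itself.
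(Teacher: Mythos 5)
Your proposal is correct and follows essentially the same route as the paper: reduce to the Coxeter orbifold $\orbP$ of Lemma~\ref{Lemma:PolyhedralQuotient}, use the $6$--cycle Coxeter diagram and the Gram matrix of Lemma~\ref{Lem:Gram matrix} with Vinberg's criterion plus Niven's theorem to force $m,n \in \{3,4,6\}$, discard the non-hyperbolic tilings, and verify conditions (a) and (b) for $(6,4)$ and $(6,6)$ via Corollary~\ref{Cor:OurGram}, including the $6$--cycle products $96$ and $72$. The only cosmetic difference is phrasing the exclusion of Euclidean and spherical cases via $(m-2)(n-2)>4$ rather than listing the tilings, which is equivalent.
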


\begin{proof}
By Lemma~\ref{Lemma:PolyhedralQuotient}, $DM$ covers a Coxeter orbifold $\orbP = \HH^3 / \Gamma(P)$, corresponding to the Coxeter polyhedron $P$ of Figure~\ref{Fig:DMReflectionQuotient}. Thus $DM$ is arithmetic if and only if $\orbP$ is arithmetic. Recall that the Gram matrix of $P$ was computed in Lemma~\ref{Lem:Gram matrix}. 

Now, suppose that $\orbP$ is arithmetic. By Vinberg's criterion (Theorem~\ref{Thm:Vinberg}),  every cyclic product $b_I$, as in Definition~\ref{Def:PolyhedralFields}, must be rational. In particular, arithmeticity requires
\[
b_I = a_{12} a_{21} = 4\cos^2(\pi/m) = 2 \cos(2\pi/m) +2 \in \Q
\]
and
\[
b_J = a_{13} a_{31} = 4\cos^2(\pi/n) = 2 \cos(2\pi/n) +2 \in \Q.
\]
For an integer $p \geq 3$, Niven's Theorem~\cite[Corollary 3.12]{Niven} says that $\cos(2\pi/p) \in \Q$ if and only if $p \in \{3,4,6\}$.  
Thus arithmeticity of $\orbP$ implies $\{m,n\} \subset \{3,4,6\}$.
Since the tilings $[3,3,3,3]$, $[4,3,4,3]$, $[6,3,6,3]$, and $[4,4,4,4]$ cannot occur on a hyperbolic surface, this proves the ``only if'' direction of the theorem.

For the converse direction, suppose that $(m,n)=(6,4)$. Then the Gram matrix $G(P)$ must be as shown in \eqref{Eqn:64Gram}.
It is immediate to check that every entry $a_{ij}$ is an algebraic integer, verifying condition~\eqref{Itm:SimplerA} of Theorem~\ref{Thm:Vinberg}. For condition~\eqref{Itm:SimplerB}, we need to consider the cyclic products $b_I$ corresponding to multi-indices $I \subset \{1, \ldots, 6\}$. By Figure~\ref{Fig:DMReflectionQuotient}, every nonzero cyclic product $b_I$ is either the square of a matrix entry, as in the above displayed equation, or corresponds to a cycle of all six edges in the Coxeter diagram. From \eqref{Eqn:64Gram}, we see that the square of every matrix entry is rational. Meanwhile, the cycle of all six edges gives
\begin{align*}
b_I &= a_{12} \cdot a_{24} \cdot a_{46} \cdot a_{65} \cdot a_{53} \cdot a_{31} \\
& = -\sqrt{3} \cdot -2 \cdot -2 \sqrt{3} \cdot -2 \sqrt{2} \cdot -2 \cdot - \sqrt{2} \\
& = 96 \: \in \: \Q.
\end{align*}
Thus, by Theorem~\ref{Thm:Vinberg}, $\orbP$ is arithmetic for the $[6,4,6,4]$ tiling.

Finally, suppose that $m = n = 6$. In the Gram matrix $G(P)$ shown in \eqref{Eqn:66Gram}, every entry $a_{ij}$ is an algebraic integer. The square of every matrix entry is rational. Furthermore, the cyclic product $b_I$ corresponding the cycle of all six edges is
\[
b_I \: = \:  a_{12} \cdot a_{24} \cdot a_{46} \cdot a_{65} \cdot a_{53} \cdot a_{31} \: = \: 72 \: \in \: \Q.
\]
Thus, by Theorem~\ref{Thm:Vinberg}, $\orbP$ is arithmetic for the $[6,6,6,6]$ tiling.
\end{proof}

\begin{remark}\label{Rem:RKKPreviousWork}
A handful of results in this section were previously obtained in Kaplan-Kelly's thesis~\cite{RKK:Thesis}, using different methods.
The invariant trace fields found in the proof of Corollary~\ref{Cor:InvariantTraceFields} are also computed in \cite[Theorem 5.9 and Example 5.13]{RKK:Thesis}, using the shapes of tetrahedra in ideal triangulations. We find that the use of Gram matrices in Corollary~\ref{Cor:InvariantTraceFields} simplifies the calculation. The arithmeticity of $DM$ for $(m,n)=(6,6)$ also appears in \cite[Theorem 5.9]{RKK:Thesis}, where it is shown that $DM$ has a geometric decomposition into regular ideal octahedra, and is thus commensurable to complements of the Whitehead link and the Borromean rings.
\end{remark}

\section{Canonical decompositions and commensurability}\label{Sec:Canonical}

In this section, we continue our focus on right-angled tiling links in thickened surfaces of genus $g \geq 2$. In Theorem~\ref{Thm:Canonical}, we compute the canonical decomposition of the double $D \Ext(L)$ of each link exterior. Using these canonical decompositions, we give  a complete characterization of commensurability, proving  Theorem~\ref{Thm:MainCommensurability} for surfaces of genus $g \geq 2$. In Corollary~\ref{Cor:MinimalOrbifold}, we also identify the minimal orbifold in a non-arithmetic commensurability class.

\begin{defn}\label{Def:CanonicalDecomp}
Let $M$ be a cusped hyperbolic $3$--manifold. Let $A_1, \ldots, A_n$ be a disjoint collection of horospherical neighborhoods of the cusps of $M$. The \emph{Ford--Voronoi domain} $\mathcal F \subset M$ consists of all points of $M$ that have a unique shortest path to the union of the $A_i$. The complement $\Sigma = M \setminus \mathcal F$, called the \emph{cut locus}, is a $2$--dimensional cell complex comprised of finitely many geodesic polygons. The combinatorial dual of $\Sigma$, denoted $\mathcal T$, is called the \emph{canonical polyhedral decomposition} determined by $M$ and $A_1, \ldots, A_n$. The decomposition $\mathcal T$ has one ideal polyhedron dual to each vertex of $\Sigma$, one ideal face dual to each edge of $\Sigma$, and one ideal edge dual to each polygon of $\Sigma$.
\end{defn}

Epstein and Penner~\cite{EP:Canonical} gave a characterization of $\mathcal T$ in terms of convexity in the hyperboloid model of $\HH^3$ (compare Remark~\ref{Rem:MinkowskiGram}). Accordingly, $\mathcal T$ is sometimes called the Epstein--Penner decomposition of $M$.

The term \emph{canonical polyhedral decomposition} is slightly misleading, because $\mathcal T$ is only as canonical as the choice of cusp neighborhoods in $M$. However, when $M$ has a single cusp, the choice of cusp neighborhood $A \subset M$ is immaterial because expanding or contracting $A$ does not affect the cut locus $\Sigma$ or the polyhedral decomposition $\mathcal T$. In this situation, $\mathcal T$ is completely canonical. Similarly, if $f \from M \to \orbO$ is a covering map where $\orbO$ is an orbifold with a single cusp, we may choose a horospherical cusp neighborhood $A \subset \orbO$. Then $f^{-1}(A)$ is an equivariant collection of cusp neighborhoods in $M$, with the property that expanding or contracting $A$ has no effect on the cut locus $\Sigma \subset M$ or the dual decomposition $\mathcal T$.

\begin{defn}\label{Def:TotallyCanonicalDecomp}
Let $M = \HH^3 / \Gamma$ be a cusped, non-arithmetic hyperbolic $3$--manifold with $n$ cusps. By Theorem~\ref{Thm:MargulisArithmeticity}, the commensurator $\Comm(\Gamma)$ is discrete, and the quotient $\omin = \HH^3 / \Comm(\Gamma)$ is the unique minimal orbifold covered by $M$. Choose equal-volume cusp neighborhoods in $\omin$. Pulling back these cusp neighborhoods to $M$ gives 
a \emph{maximally symmetric} collection of cusps  $A_1, \ldots, A_n \subset M$. By construction, the collection $A_1, \ldots, A_n$ is uniquely determined up to equivariant expansion or contraction, hence the cut locus $\Sigma \subset M$ is also uniquely determined. The \emph{maximally symmetric canonical decomposition} $\mathcal T$ of $M$ is the canonical polyhedral decomposition dual to $\Sigma$, determined by the maximally symmetric collection of cusps in $M$.
\end{defn}

Canonical decompositions give the following useful criterion for commensurability.

\begin{theorem}\label{Thm:GHH+Margulis}
Let $M$ and $M'$ be cusped, non-arithmetic hyperbolic $3$-manifolds. Then the following are equivalent:
\begin{enumerate}[\:\: $(1)$]
\item\label{Itm:Commens} $M$ and $M'$ are commensurable.
\item\label{Itm:CommonQuot} $M$ and $M'$ cover a common $3$--orbifold.
\item\label{Itm:GHH} The maximally symmetric canonical decompositions $\mathcal T$ of $M$ and $\mathcal T'$ of $M'$ lift to isometric tilings of $\HH^3$.
\end{enumerate}
\end{theorem}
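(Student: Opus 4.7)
The plan is to prove the two equivalences (1)$\Leftrightarrow$(2) and (2)$\Leftrightarrow$(3) separately, with the Epstein--Penner construction providing the bridge between orbifold quotients and canonical decompositions.

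The equivalence (1)$\Leftrightarrow$(2) is essentially Margulis's theorem (Theorem~\ref{Thm:MargulisArithmeticity}). The direction (2)$\Rightarrow$(1) is Remark~\ref{Rem:EasyCommensurability}, which holds without any arithmeticity hypothesis. For (1)$\Rightarrow$(2), I would write $M = \HH^3/\Gamma$ and $M' = \HH^3/\Gamma'$. After conjugating $\Gamma'$, commensurability means $\Gamma$ and $\Gamma'$ share a finite-index subgroup, so $\Comm(\Gamma) = \Comm(\Gamma')$. By Theorem~\ref{Thm:MargulisArithmeticity}\eqref{Itm:MargulisNonAr}, non-arithmeticity forces this common commensurator to be discrete. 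Setting $\omin = \HH^3/\Comm(\Gamma)$ gives a common orbifold quotient of $M$ and $M'$.

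For (2)$\Rightarrow$(3), suppose $M$ and $M'$ cover the same orbifold $\orbO$. Combined with (2)$\Rightarrow$(1) and Theorem~\ref{Thm:MargulisArithmeticity}\eqref{Itm:MargulisNonAr}, both $M$ and $M'$ actually cover $\omin = \HH^3/\Comm(\Gamma)$. Choose horospherical cusp neighborhoods in $\omin$ and pull them back to $M$ and $M'$; these are precisely the maximally symmetric cusp packings of Definition~\ref{Def:TotallyCanonicalDecomp}. Lifting further to $\HH^3$, we obtain a $\Comm(\Gamma)$--invariant collection of horoballs $\mathcal H \subset \HH^3$. Since the Epstein--Penner construction is functorial under isometries, the resulting cut locus and dual polyhedral decomposition $\widetilde{\mathcal T}$ of $\HH^3$ are $\Comm(\Gamma)$--invariant, and they descend to the maximally symmetric canonical decompositions of both $M$ and $M'$. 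In particular, $\mathcal T$ and $\mathcal T'$ lift to the same tiling $\widetilde{\mathcal T}$ of $\HH^3$.

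For (3)$\Rightarrow$(1), suppose after conjugation that $\mathcal T$ and $\mathcal T'$ lift to the same tiling $\widetilde{\mathcal T}$ of $\HH^3$. Let $\Gamma_0 < \isom(\HH^3)$ be the full isometry group of $\widetilde{\mathcal T}$. Since each cell of $\widetilde{\mathcal T}$ is a finite-sided ideal polyhedron and $M, M'$ have finite volume, the groups $\Gamma$ and $\Gamma'$ each have a fundamental domain consisting of finitely many cells of $\widetilde{\mathcal T}$, so each has finite index in $\Gamma_0$. In particular $\Gamma_0$ is discrete, and $\Gamma \cap \Gamma'$ has finite index in both, proving commensurability. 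The main subtlety, which I would address carefully, is verifying that the maximally symmetric cusp packing really is $\Comm(\Gamma)$--invariant when pulled back from $\omin$; this amounts to observing that the horoball preimage in $\HH^3$ is determined by the orbits of $\Comm(\Gamma)$ on the cusps of $\omin$, and Epstein--Penner convex hulls commute with any isometry preserving the horoball set. Once that invariance is in place, the rest of the argument is formal.
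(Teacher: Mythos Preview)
Your proposal is correct and follows essentially the same route as the paper: the equivalence $(1)\Leftrightarrow(2)$ via Margulis, and $(2)\Leftrightarrow(3)$ via the Goodman--Heard--Hodgson argument that the commensurator is the symmetry group of the lifted canonical tiling. The paper simply cites \cite{Margulis:Discrete} and \cite[Theorems 2.4 and 2.6]{GHH:Commensurators} as black boxes, whereas you unpack the content of those citations; your direct proof of $(3)\Rightarrow(1)$ (rather than $(3)\Rightarrow(2)$) is a harmless logical rearrangement.
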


\begin{proof}
The equivalence $\eqref{Itm:Commens} \Leftrightarrow \eqref{Itm:CommonQuot}$ is Theorem~\ref{Thm:MargulisArithmeticity}.\eqref{Itm:MargulisNonAr}, due to Margulis.
The equivalence $\eqref{Itm:CommonQuot} \Leftrightarrow \eqref{Itm:GHH}$ is a theorem of Goodman, Heard, and Hodgson \cite[Theorems 2.4 and 2.6]{GHH:Commensurators}. See also the discussion immediately after \cite[Theorem 2.6]{GHH:Commensurators}.
\end{proof}

We can now show that the canonical decompositions of right-angled tiling link exteriors consist of ideal drums. The following result is a more precise version of Theorem~\ref{Thm:MainCanonical}.

\begin{theorem}    \label{Thm:Canonical}
Let $F$ be a surface of genus $g \geq 2$, and let $L \subset F \times I$ be a right-angled tiling link corresponding to a $[m,n,m,n]$ tiling of $F$. Then the double $DM$ of $M = \Ext(L)$ covers a one-cusped orbifold $\orbP$. 
There is a choice of cusp neighborhoods of $DM$, equivariant with respect to the cover $DM \to \orbP$, such that the canonical polyhedral decomposition of $DM$ consists of regular ideal $m$--drums and $n$--drums.

Furthermore, if $DM$ is non-arithmetic, then the canonical decomposition consisting of these drums is maximally symmetric.
\end{theorem}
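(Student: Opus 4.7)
The plan has three parts: set up equivariant cusp neighborhoods using the cover $DM \to \orbP$; verify that the existing drum decomposition from Lemma~\ref{Lem:Drums} satisfies the Epstein--Penner convexity criterion; and confirm that in the non-arithmetic case the chosen cusps are maximally symmetric.

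For the setup, I will pick any horospherical neighborhood $A$ of the unique cusp of the one-cusped orbifold $\orbP$ and let $\mathcal{A} \subset DM$ be its preimage. Because $\orbP$ has only one cusp, the canonical decomposition associated to $\mathcal{A}$ is independent of the size of $A$, and by construction $\mathcal{A}$ is invariant under the full deck group of the cover $DM \to \orbP$. In particular, the canonical decomposition inherits this symmetry, so it descends to a decomposition of $\orbP$. Lemma~\ref{Lem:Drums} already gives a decomposition of $DM$ into regular ideal $m$--drums and $n$--drums (one drum per polygon of the tiling in each of the two copies of $\Ext(L)$), and this drum decomposition is likewise equivariant. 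Descending to $\orbP$, the drum decomposition becomes the decomposition of the underlying polyhedron $P$ into the quarter $m$--wedge and quarter $n$--wedge of Remark~\ref{Rem:WedgeCount}, separated by a single face.

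To verify canonicity, I will apply the tilt criterion of Epstein--Penner: the drum decomposition equals the canonical decomposition exactly when, at every 2-face, the two adjacent ideal polyhedra lift to light-cone vectors in $\mathbb{R}^{3,1}$ whose sum is strictly time-like in the outward direction (so that the two drums are not coplanar in the Minkowski sense and do not merge). By equivariance, there are only two orbits of 2-face to check: the lateral face separating two drums (possibly of different types), and the horizontal midsection face between two drums arising from the two halves of a common bipyramid. For each orbit, I will pick a model ideal vertex on the shared face and compute the tilts from the dihedral angles of the drums and the shape of the Euclidean cross-section of $\mathcal{A}$ on a horosphere. Equivalently, descending to $\orbP$, it suffices to verify the local convexity condition across the single face of $P$ that separates the quarter $m$--wedge from the quarter $n$--wedge.

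The main obstacle is the tilt computation on the lateral face between drums of different types: the $m$/$n$ asymmetry prevents any cheap mirror-symmetry argument and forces an explicit trigonometric calculation involving the regular ideal drum geometry (apex angles $2\pi/m$ and $2\pi/n$, and the horospherical triangle at each ideal vertex). I expect the strictly negative tilt sum to follow from the fact that the drums meet with total dihedral angle less than $\pi$ at every shared face; this in turn is forced by the right-angled structure of $\Ext(L)$ (Theorem~\ref{Thm:RGCR}) and the hyperbolicity of $F$. For the maximally symmetric claim when $DM$ is non-arithmetic, Theorem~\ref{Thm:MargulisArithmeticity} provides a unique minimal orbifold $\omin$ covered by $DM$. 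Since $DM$ covers $\orbP$ and $\orbP$ is one-cusped, $\orbP$ in turn covers $\omin$; pulling back a cusp neighborhood of $\omin$ through $\orbP$ gives exactly $\mathcal{A}$ (up to equivariant scaling), so the drum decomposition is indeed the maximally symmetric canonical decomposition of $DM$.
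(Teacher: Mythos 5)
Your setup (pulling back a horospherical neighborhood of the single cusp of $\orbP$, noting scale-independence) and your argument for maximal symmetry in the non-arithmetic case both match the paper and are fine. The gap is in the central step: you never actually verify that the drum decomposition is the Epstein--Penner decomposition. You reduce the problem to a tilt computation and then write that you \emph{expect} the strict convexity to follow because ``the drums meet with total dihedral angle less than $\pi$ at every shared face.'' That is not a correct form of the tilt criterion: dihedral angles live along edges, not faces, and more importantly the tilt of an ideal polyhedron relative to a face depends on the sizes of the horoballs at its ideal vertices, i.e.\ on the actual Euclidean cross-sections induced by the equivariant cusp choice, which you never compute. So the decisive inequality is asserted, not proved. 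There is also a secondary error in your reduction: checking only ``the single face of $P$ separating the quarter $m$--wedge from the quarter $n$--wedge'' misses the base (midsection) faces of the drums, which descend to \emph{mirror} faces of $\orbP$ (they lie in the horizontal reflection plane used to form $P$), and convexity across those faces must be checked as well.

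The paper avoids all of this computation with a symmetry argument you could adopt. Lift the cut locus $\Sigma$ and the drum tiling to $\HH^3$. Inside each regular ideal drum $\Delta$, the basin of attraction of each vertex horoball is bounded by planes of reflective symmetry of $\Delta$, because the symmetry group acts transitively on the vertex horoballs. If a shortest geodesic $\gamma$ from a point $x$ to its unique nearest horoball $H_x$ crossed such a symmetry plane $\Pi$ transversely, reflecting the far portion of $\gamma$ in $\Pi$ would produce a broken path of the same length from $x$ to a different horoball, which can be strictly shortened, contradicting minimality. Hence every such $\gamma$ stays in one basin, each component of $\HH^3 \setminus \widetilde\Sigma$ is the union of the basins meeting a single horoball, and the cut locus is exactly the union of the symmetry-plane walls between basins; dualizing identifies the canonical decomposition with the drum tiling, with no tilt computation and no case analysis over face orbits. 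If you prefer to keep the Epstein--Penner route, you must actually carry out the tilt (or Minkowski-convexity) computation for both face types, using the explicit horoball sizes coming from the cusp of $\orbP$; as written, the proof is incomplete at exactly the point where all the content lies.
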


\begin{proof}
By Lemma~\ref{Lemma:PolyhedralQuotient}, there is a finite cover $f \from DM \to \orbP$, where $\orbP$ is a polyhedral orbifold with a single cusp.
Let $C \subset \orbP$ be an embedded horospherical neighborhood of this single cusp.
The preimage $f^{-1}(C)  \subset DM$ is an equivariant choice of cusp neighborhoods in $DM$. We use this choice of cusps in $DM$ to build the cut locus $\Sigma$ and the canonical decomposition $\mathcal T$. Observe that if $DM$ is non-arithmetic, the canonical decomposition $\mathcal T$ will necessarily be maximally symmetric, since the connected cusp $C \subset \orbP$ must cover a connected cusp in the minimal orbifold $\omin$.

By Lemma~\ref{Lem:Drums}, $DM$ admits a geometric decomposition into regular ideal $m$--drums and $n$--drums. Furthermore, each $n$--drum $\Delta$ is assembled from $2n$ half-wedges, and the quotient of $\Delta$ by its symmetry group is a quarter-wedge. By Remark~\ref{Rem:WedgeCount}, a fundamental domain for $\orbP$ consists of a quarter of an $m$--wedge and a quarter of an $n$--wedge.

We use the universal covering $p \from \HH^3 \to DM$ and the finite cover $f \from DM \to \orbP$ to pull back several objects to $\HH^3$. The cusp neighborhood $C \subset \orbP$ pulls back to a collection of horoballs in $\HH^3$. The cut locus $\Sigma \subset DM$ and the canonical decomposition $\mathcal T$ dual to $\Sigma$ pull back to 
a $2$--complex $\widetilde \Sigma \subset \HH^3$ and a polyhedral decomposition $\widetilde {\mathcal T}$ that is dual to $\widetilde \Sigma$. The decomposition of $DM$ into regular ideal drums pulls back to a tiling of $\HH^3$ by regular ideal drums.

    \begin{figure}[h]
    \centering
    \begin{overpic}[abs,unit=1mm,scale=.5, trim={ 0in 5in 0in 0in}, clip]{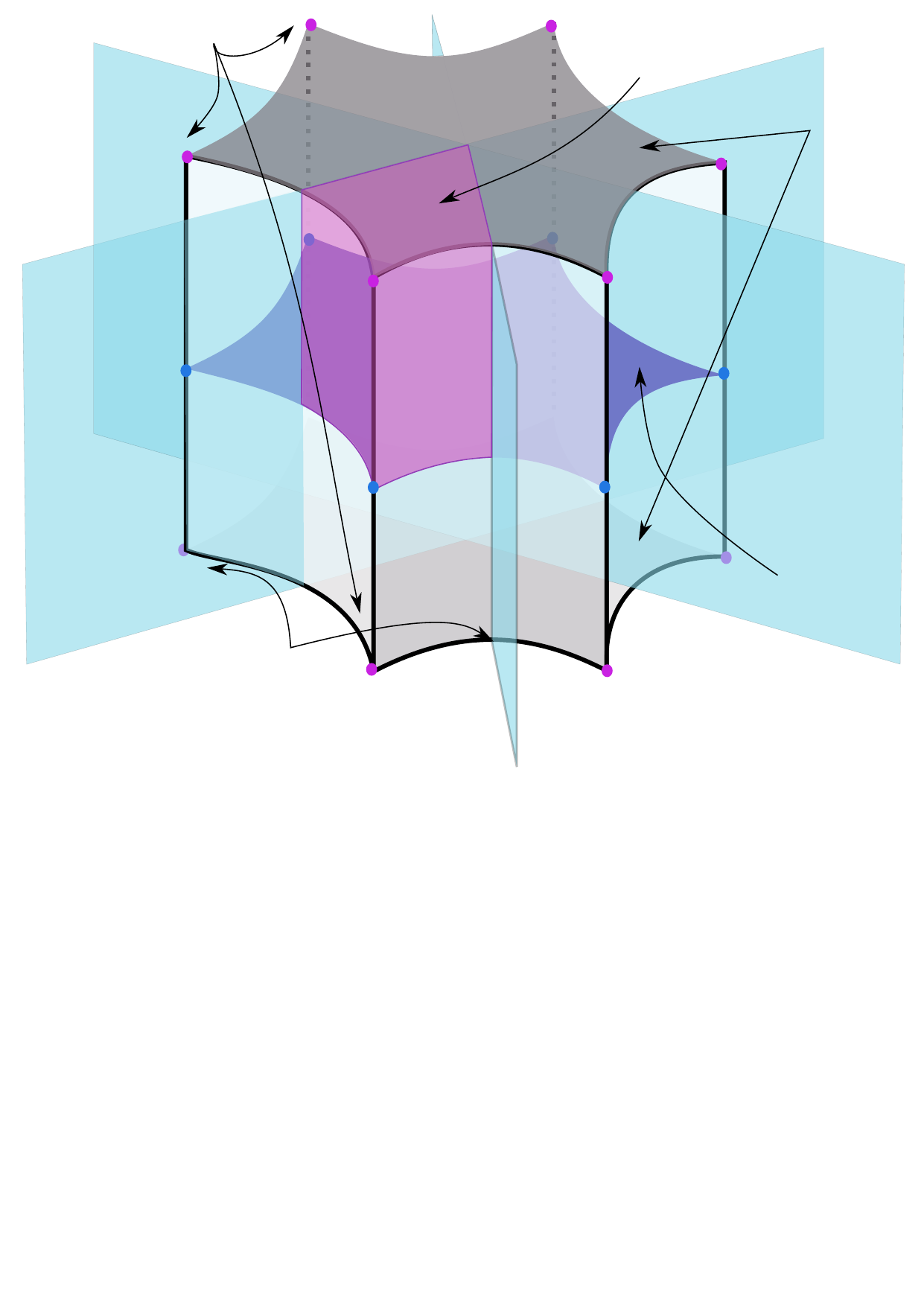}
    \put(16,81){Ideal}
    \put(72,78){$Q_i$}
    \put(80, 72){Horizontal midsections}
    \put(82,15){Truncation face}
    \put(13,5){Vertical planes of symmetry}
        \end{overpic}
        \caption{A drum with vertical and horizontal reflection planes drawn in blue. A basis of attraction is highlighted in pink.}
        \label{Drum with planes}
    \end{figure}

Given a regular ideal $n$--drum $\Delta$ in the tiling of $\HH^3$, let $H_1, \ldots, H_{2n}$ be the horoballs about the vertices of $\Delta$. Since $\Delta$ covers one of the quarter-wedges that make up the polyhedral orbifold $\orbP$, the symmetry group $\isom(\Delta)$ acts transitively on these horoballs. We associate to each horoball $H_i$ a \emph{basin of attraction} $Q_i$, namely the set of all points $y \in \Delta$ that are strictly closer to $H_i$ than to any $H_j$ for $j \neq i$. Since $\Delta$ is regular, meaning maximally symmetric, the region $Q_i$ is cut out by planes of symmetry of $\Delta$. Furthermore, $\Delta \setminus (\bigcup Q_i)$ is a union of totally geodesic polygons, with each polygon lying in a plane of symmetry,  equidistant to two of the horoballs. 
See Figure \ref{Drum with planes}, where one basin $Q_i$ is highlighted in pink. 

To prove the theorem, we must show that $\widetilde {\mathcal T}$ coincides with the tiling of $\HH^3$ by regular ideal drums. By duality, this is equivalent to showing that the lift $\widetilde \Sigma$ of the cut locus $\Sigma \subset \bdy M$ coincides with the union of the polygons of $\Delta \setminus (\bigcup Q_i)$, as $\Delta$ ranges over all the drums in the tiling. Another way to say this is that the every component of $\HH^3 \setminus \widetilde \Sigma$ is the union of all basins $Q_i$ associated to a single horoball $H$.

Consider an arbitrary point $x \in \HH^3 \setminus \widetilde \Sigma$. By Definition~\ref{Def:CanonicalDecomp}, this means there is a unique horoball $H_x \subset (f \circ p)^{-1}(C)$ that is closest to $x$. Let $\gamma = \gamma_x$ be the distance-realizing geodesic from $x$ to $H_x$. If $x \in H_x$, then $\gamma = \{x\}$ is a single point. Otherwise, some initial portion of the geodesic $\gamma$ is contained in some basin $Q$, whose ideal point is inside horoball $H_x$. 

We claim that the entire geodesic $\gamma = \gamma_x$ is contained in the basin $Q$. This will imply $x \in Q$, and since $x$ is arbitrary this implies the entire component of $\HH^3 \setminus \widetilde \Sigma$ centered around horoball $H_x$ is the union of basins that meet $H_x$.

 \begin{figure}[h]
    \centering
    \begin{overpic}[abs,unit=1mm,scale=.5, trim={0 6.5in 0 0}, clip]{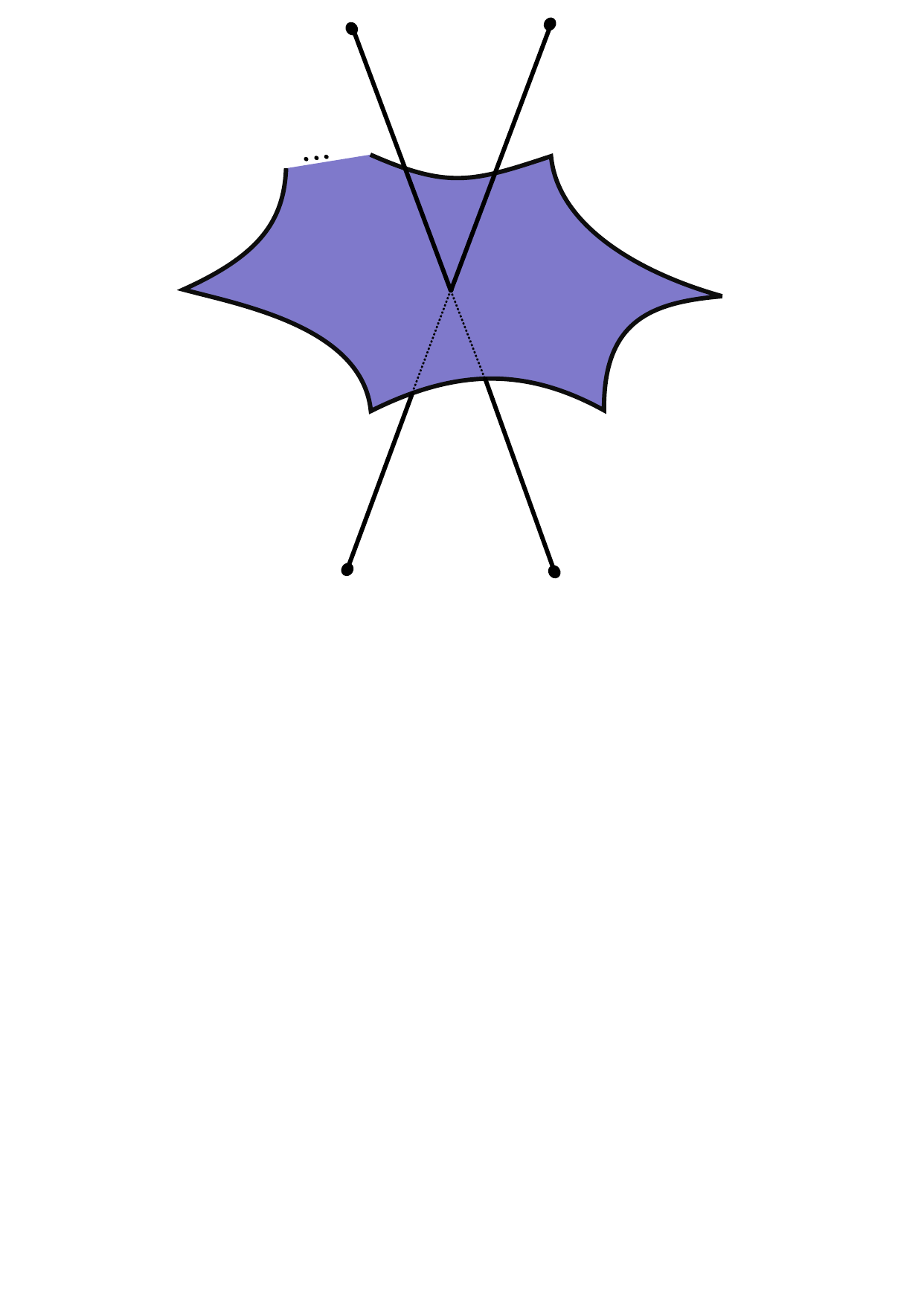}
    \put(40, 50){$\gamma$}
    \put(60, 50){$\bar{\gamma}$}
    \put(37, 63){$x$}
    \put(36, 0){$\bar{x}$}
    \put(64,0){$H_x$}
    \put(64, 63){$H_{\bar{x}}$}
    \put(51, 40){$\delta$}
        \end{overpic}
        \caption{A face in reflection plane $\Pi$ and its intersection with geodesics $\gamma$ and $\bar{\gamma}$.}
        \label{Fig: Face with intersection}
    \end{figure}

Suppose for a contradiction that $\gamma$ exits the basin $Q$ that contains its initial segment. Then $\gamma$ must have a transverse intersection with $\Pi$, a plane of reflective symmetry of the ambient drum. See Figure~\ref{Fig: Face with intersection}.
The reflection of $\gamma$ in $\Pi$ is a geodesic $\overline{\gamma}$ that connects a point $\bar{x}$ and a horoball $H_{\bar{x}}$ that lie on opposite sides of $\Pi$.
Observe that $\gamma$ and $\bar{\gamma}$ intersect at a point in $\Pi$. (If $\gamma$ is contained in an edge of one of the drums, meeting $\Pi$ at a right angle, then $\bar{\gamma}$ is parameterized in the opposite direction and thus they still meet at a point.) Consider the piecewise geodesic $\delta$, running between $x$ and $H_{\bar{x}}$, built from the initial portion of $\gamma$ and the final portion of $\overline{\gamma}$. Now, the length of $\delta$ is equal to the length of $\gamma$, which contradicts the assumption that $H_x$ is the closest horoball to $x$ because $\delta$ is only piecewise geodesic.
This proves the claim and the theorem.
\end{proof}

As a corollary of Theorem~\ref{Thm:Canonical}, we can identify the minimal orbifold in a non-arithmetic commensurability class.

\begin{corollary}\label{Cor:MinimalOrbifold}
Let $F$ be a surface of genus $g \geq 2$. Let $L \subset F \times I$ be a right-angled tiling link corresponding to a $[m,n,m,n]$ tiling, and suppose $M = \Ext(L)$ is non-arithmetic. Let $\orbP$ be the Coxeter orbifold covered by $D M$, as in Figure~\ref{Fig:DMReflectionQuotient}, and let $\omin$ be the minimal orbifold in the commensurability class of $DM$. Then the degree of the cover $\orbP \to \omin$ is
\[
d = \begin{cases}
1 & \text{if } m \neq n, \\
2 & \text{if } m = n.
\end{cases}
\]
In particular, if $m \neq n$, then $\omin = \orbP$.
\end{corollary}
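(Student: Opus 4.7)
The plan is to use Theorems~\ref{Thm:Canonical} and~\ref{Thm:GHH+Margulis} to identify $\Lambda := \Comm(\pi_1(DM))$ with the isometry group of the drum tiling $\widetilde{\mathcal T}$ of $\HH^3$, and then compute $[\Lambda : \Gamma(P)]$ via the stabilizer of $P$ in $\Lambda$. This stabilizer will turn out to be in bijection with the geometric automorphisms of the Coxeter diagram of $P$, which can be read off directly from the Gram matrix in Lemma~\ref{Lem:Gram matrix}.

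First I would check that $\omin = \HH^3/\Lambda$ and that $\Gamma(P) \leq \Lambda$, so that $d = [\Lambda : \Gamma(P)]$. By Theorem~\ref{Thm:Canonical}, the maximally symmetric canonical decomposition of $DM$ lifts to a tiling $\widetilde{\mathcal T}$ by regular ideal $m$- and $n$-drums. Since $DM$ is non-arithmetic, Theorem~\ref{Thm:GHH+Margulis} together with Theorem~\ref{Thm:MargulisArithmeticity}\eqref{Itm:MargulisNonAr} identifies $\Lambda$ with $\isom(\widetilde{\mathcal T})$. The reflections in the faces of $P$ permute the pieces of the tiling, so they lie in $\Lambda$, giving $\Gamma(P) \leq \Lambda$. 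To reduce the index to a stabilizer, I would use that $\Gamma(P)$ acts simply transitively on its fundamental domains: given $g \in \Lambda$, there is a unique $\gamma \in \Gamma(P)$ with $\gamma P = g P$, and $\rho := \gamma^{-1} g$ stabilizes $P$. This shows $\Lambda = \Gamma(P) \cdot \mathrm{Stab}_\Lambda(P)$ with trivial intersection, hence $d = |\mathrm{Stab}_\Lambda(P)|$.

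Next I would identify $\mathrm{Stab}_\Lambda(P)$ with the symmetries of the Gram matrix $G(P)$ from Lemma~\ref{Lem:Gram matrix}. Any $\rho \in \mathrm{Stab}_\Lambda(P)$ permutes the faces $F_1, \ldots, F_6$ and preserves all dihedral angles and inter-face distances, hence preserves $G(P)$. Conversely, any symmetry of $G(P)$ is realized by an isometry $\rho$ of $\HH^3$ carrying $P$ to itself via the hyperboloid model of Remark~\ref{Rem:MinkowskiGram}; such $\rho$ normalizes $\Gamma(P)$ by conjugating face-reflections to face-reflections, and thus preserves the $\Gamma(P)$-equivariant tiling $\widetilde{\mathcal T}$, so $\rho \in \Lambda$. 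Inspecting $G(P)$, the only nontrivial candidate is the involution $\sigma$ swapping $F_2 \leftrightarrow F_3$ and $F_4 \leftrightarrow F_5$ (fixing $F_1, F_6$); this preserves $G(P)$ iff $\cos(\pi/m) = \cos(\pi/n)$ and $C_{m,n} = C_{n,m}$, i.e., iff $m = n$. Thus $d = 1$ when $m \neq n$ and $d = 2$ when $m = n$.

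The main obstacle is the ``conversely'' direction above: checking that a combinatorial symmetry of the Gram matrix actually lifts to an element of $\Lambda$ preserving the entire tiling $\widetilde{\mathcal T}$, not just the single fundamental domain $P$. The key point is that $\widetilde{\mathcal T}$ is the $\Gamma(P)$-orbit of the drum-piece decomposition $P = P_m \cup P_n$ inside a single fundamental domain (see Remark~\ref{Rem:WedgeCount}), and when $m = n$ the involution $\rho$ swaps the congruent quarter-wedges $P_m \leftrightarrow P_n$ while normalizing $\Gamma(P)$; equivariance then promotes this local symmetry to a global symmetry of $\widetilde{\mathcal T}$.
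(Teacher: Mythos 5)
Your overall framing (identify $\Comm(\Gamma(P))$ with the symmetry group of the lifted drum tiling $\widetilde{\mathcal T}$ via Theorems~\ref{Thm:Canonical} and~\ref{Thm:GHH+Margulis}, then measure the index over $\Gamma(P)$) matches the paper, and your Gram-matrix count of the possible symmetries of $P$ correctly singles out the involution $F_2\leftrightarrow F_3$, $F_4\leftrightarrow F_5$ exactly when $m=n$. However, there is a genuine gap at the step ``given $g\in\Lambda$, there is a unique $\gamma\in\Gamma(P)$ with $\gamma P = gP$.'' Uniqueness does follow from simple transitivity of a Coxeter group on its chambers, but \emph{existence} does not: you are asserting that every symmetry of the coarse drum tiling $\widetilde{\mathcal T}$ carries $P$ to a $\Gamma(P)$-translate of $P$, i.e.\ that $\Lambda$ preserves the finer chamber decomposition (equivalently, that $\Gamma(P)$ is normal in $\Lambda$). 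That is essentially the content of the corollary, not a formal consequence of $\Gamma(P)$ having $P$ as fundamental domain; a priori an element of $\Lambda$ stabilizing a drum could act by an isometry of that drum not lying in $\Gamma(P)$ (for instance, when $n=4$ a drum is combinatorially a cube and could conceivably admit symmetries exchanging bases with lateral faces), in which case $gP$ would not be a chamber and your factorization $\Lambda=\Gamma(P)\cdot\mathrm{Stab}_\Lambda(P)$ breaks down.

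Closing this gap is exactly what the paper's proof does: it shows that any element of $\Comm(\Gamma)$ stabilizing an $n$--drum must send bases to bases --- when $m\neq n$ because bases are glued to drums of the same type while lateral faces are glued to drums of the other type, and when $m=n$ because $m=n>4$ forces the $n$--gon bases and quadrilateral lateral faces to be combinatorially distinct --- and hence acts by one of the $4n$ prism symmetries of Definition~\ref{Def:Drum}, all of which already lie in $\Gamma(P)$ since the drum covers a quarter-wedge of $\orbP$ with degree $4n$. From this one gets that color-(equivalently type-)preserving elements of $\Lambda$ lie in $\Gamma(P)$, giving index $1$ when $m\neq n$ and index at most $2$ when $m=n$, with the Gram/reflection symmetry you found realizing index exactly $2$. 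Note also that the direction you flag as ``the main obstacle'' (promoting a symmetry of $G(P)$ to an element of $\Lambda$) is the easier half: such an isometry normalizes $\Gamma(P)$, descends to $\orbP$, and hence preserves the canonical decomposition. The hard half is the forward containment you assumed; without an argument along the paper's lines your proof is incomplete.
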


\begin{proof}
Let $\Gamma = \Gamma(P) = \pi_1(\orbP)$. 
By Theorem~\ref{Thm:MargulisArithmeticity}, the minimal orbifold in the commensurability class of $DM$ and $\orbP$ is the commensurator quotient
$\omin = \HH^3 / \Comm(\Gamma)$. 

Let $\mathcal T$ be the maximally symmetric canonical decomposition of $DM$.  By a theorem of Goodman, Heard, and Hodgson~\cite[Theorem 2.6]{GHH:Commensurators}, the commensurator $\Comm(\Gamma)$ can be identified as the full symmetry group of $\widetilde {\mathcal T}$, the lift of $\mathcal T$ to the universal cover $\HH^3$.

By Theorem~\ref{Thm:Canonical}, $\mathcal T$ and $\widetilde {\mathcal T}$ consist of regular ideal $m$--drums and $n$--drums. Recall from Figure~\ref{Fig:DMReflectionQuotient} that a fundamental domain for $\orbP$ consists of one quarter of an $m$--wedge (a quotient of an $m$--drum) and one quarter of an $n$--wedge (a quotient of an $n$--drum). In particular $\Gamma = \pi_1(\orbP)$ acts transitively on the set of $m$--drums and the set of $n$--drums.

We may label every drum of $\widetilde{\mathcal T}$ as white or shaded, matching the color of the region of $\pi(L)$ corresponding to the drum. Suppose for concreteness that the $m$--drums are white and the $n$--drums are shaded.

We claim that the color-preserving subgroup of $\Comm(\Gamma)$ coincides with $\Gamma$. To that end, consider an arbitrary color-preserving element $\gamma \in \Comm(\Gamma)$. After replacing $\gamma$ by another element of the same $\Gamma$--coset, we may assume that there is an $n$--drum $\Delta$ such that $\gamma \Delta = \Delta$. Since the bases of each $n$--drum are glued to (white) $n$--drums while the lateral faces are glued to (shaded) $m$--drums, we know that $\gamma$ sends the  bases of $\Delta$ to bases. Thus $\gamma$ acts on $\Delta$ as one of the $4n$ base-preserving symmetries described in Definition~\ref{Def:Drum}. But each of these $4n$ symmetries already belongs to $\Gamma = \pi_1(\orbP)$, since $\Delta$ covers an $n$--wedge with degree $4n$. Thus $\gamma \in \Gamma$, as claimed.

If $m \neq n$, then every element of $\Comm(\Gamma)$ must be color-preserving, hence $[\Comm(\Gamma):\Gamma] = 1$. 

For the rest of the proof, we assume $m = n$. It follows that $m = n > 4$, because a $[4,4,4,4]$ tiling is Euclidean whereas $F$ is a hyperbolic surface. Consequently, the base of an $m$--drum or $n$--drum has more than $4$ sides, hence any element $\gamma \in \Comm(\Gamma)$ must send the bases of drums to bases and lateral faces to lateral faces. Since the bases of each drum are glued to other drums of the same color, every element $\gamma \in \Comm(\Gamma)$ either preserves the color of every drum, or switches the color of every drum. Thus  $[\Comm(\Gamma):\Gamma] \leq 2$. To see that $[\Comm(\Gamma):\Gamma] = 2$, it suffices to exhibit a color-reversing element of $\Comm(\Gamma)$. Given $m=n$, one of these elements acts by reflection on the polyhedron $P$ of Figure~\ref{Fig:DMReflectionQuotient}, sending face $F_2$ to $F_3$ and face $F_4$ to $F_5$.
\end{proof}

We can now prove the following result, which is the main case of Theorem~\ref{Thm:MainCommensurability}.

\begin{theorem}\label{Thm:HighGenusCommensurability}
Let $L \subset (F \times I)$ and $L' \subset (F' \times I)$ be right-angled tiling links in thickened surfaces of genus $g \geq 2$. Then the doubles $DM = D\Ext(L)$ and $DM' = D\Ext(L')$ are commensurable if and only if the tilings of $F$ and $F'$ 
have the same $m$-gon and $n$-gon faces. 
\end{theorem}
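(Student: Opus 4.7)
The plan is to prove the two directions separately. The forward (``if'') direction is immediate: if the tilings of $F$ and $F'$ have the same $\{m,n\}$ data, they lift to the same tiling of $\HH^2$, so Corollary~\ref{Cor:SameTileCommens} gives that $DM$ and $DM'$ are commensurable. All the work is in the converse.

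For the converse, assume $DM$ and $DM'$ are commensurable, and split on arithmeticity (which is a commensurability invariant). If exactly one of $DM,DM'$ is arithmetic, commensurability fails immediately, so this case is vacuous. If both are arithmetic, Theorem~\ref{Thm:TilingArithmeticity} forces each of $L, L'$ to correspond to a $[6,4,6,4]$ or $[6,6,6,6]$ tiling. By Corollary~\ref{Cor:InvariantTraceFields}, the invariant trace fields are $\Q(i\sqrt{6})$ and $\Q(i)$ respectively, and these fields are distinct. Since invariant trace fields are commensurability invariants, commensurability of $DM$ and $DM'$ forces the two tilings to agree.

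The main case is when $DM$ and $DM'$ are both non-arithmetic. Here I would apply Theorem~\ref{Thm:GHH+Margulis}: commensurability is equivalent to the maximally symmetric canonical decompositions $\mathcal T$ of $DM$ and $\mathcal T'$ of $DM'$ lifting to isometric tilings of $\HH^3$. By the ``furthermore'' clause of Theorem~\ref{Thm:Canonical}, the decompositions of $DM$ and $DM'$ into regular ideal $m$--drums and $n$--drums (respectively $m'$-- and $n'$--drums) are maximally symmetric canonical decompositions. Thus the tilings of $\HH^3$ by regular ideal $\{m,n\}$--drums and by regular ideal $\{m',n'\}$--drums must be isometric.

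The crux -- and what I expect to be the only real obstacle -- is extracting the unordered pair $\{m,n\}$ from the isometry type of the $\HH^3$--tiling by drums. For this I would argue combinatorially: a regular ideal $k$--drum has exactly $k+2$ faces ($2$ bases and $k$ lateral faces), and its bases are regular ideal $k$--gons while its lateral faces are ideal squares. Hence a drum of the tiling uniquely determines $k$ from its own combinatorics. The multiset of drum types occurring in the tiling is therefore an isometry invariant of the tiling, and this multiset is exactly $\{m\text{--drum},\, n\text{--drum}\}$ on one side and $\{m'\text{--drum},\, n'\text{--drum}\}$ on the other. This gives $\{m,n\} = \{m',n'\}$, which is exactly the statement that the two tilings have the same $m$-gon and $n$-gon faces, completing the proof.
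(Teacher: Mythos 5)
Your proposal is correct and follows essentially the same route as the paper: the same use of Corollary~\ref{Cor:SameTileCommens} for the easy direction, of Theorem~\ref{Thm:TilingArithmeticity} plus the invariant trace fields of Corollary~\ref{Cor:InvariantTraceFields} in the arithmetic case, and of Theorem~\ref{Thm:GHH+Margulis} together with Theorem~\ref{Thm:Canonical} in the non-arithmetic case. Your final combinatorial observation that a regular ideal $k$--drum determines $k$ is simply an explicit justification of the step the paper states implicitly when it concludes that $\mathcal T'$ also consists of $m$--drums and $n$--drums.
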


\begin{proof}
If $F$ and $F'$ are tiled by the same $m$-gon and $n$-gon faces, those tilings lift to isometric tilings of $\HH^2$. By Corollary~\ref{Cor:SameTileCommens}, $DM$ and $DM'$ cover a common $3$--orbifold, and are commensurable.

For the converse, suppose $DM$ and $DM'$ are commensurable. By Definition~\ref{Def:Arithmetic}, arithmeticity is a commensurability invariant. Thus either $DM$ and $DM'$ are both arithmetic, or neither is arithmetic.

First, suppose $DM$ and $DM'$ are arithmetic. In this case, Theorem~\ref{Thm:TilingArithmeticity} says that $F$ is tiled by $m$-gons and $n$-gons, where $(m,n) = (4,6)$ or $(m,n) = (6,6)$, and similarly for $F'$. By Corollary~\ref{Cor:InvariantTraceFields}, the reflection orbifolds for those tilings have distinct invariant trace fields. Since the commensurable manifolds $DM$ and $DM'$ must have the same invariant trace field, it follows that both $L$ and $L'$ must come from the same $(m,n)$ tiling.

Now, suppose that $DM$ and $DM'$ are non-arithmetic. Let $\mathcal T$ and $\mathcal T'$ be the maximally symmetric canonical decompositions of $DM$ and $DM'$, respectively. By Theorem~\ref{Thm:GHH+Margulis}, these canonical decompositions lift to isometric decompositions of $\HH^3$. If $M = (F \times I) \setminus L$ comes from a $[m,n,m,n]$ tiling of the surface $F$, then Theorem~\ref{Thm:Canonical} says that $\mathcal T$ consists entirely of $m$-drums and $n$-drums. Thus $\mathcal T'$ also consists of $m$-drums and $n$-drums, hence $M'$ must also come from a $[m,n,m,n]$ tiling.
\end{proof}

 \section{Tiling links on surfaces of every genus}
 \label{Sec:Lower-genus}
In this section, we extend the methods of the previous sections to determine the arithmeticity, commensurability classes, and canonical decompositions of tiling links corresponding to tilings of $T^2$ and $S^2$.  By combining those results with what we have shown about links on hyperbolic surfaces, we obtain proofs of Theorems~\ref{Thm:MainArithmeticity} and \ref{Thm:MainCommensurability}.

 \subsection{Links projecting to $T^2$}

Recall from Definitions~\ref{Def:TilingLink} and~\ref{Def:RightAngledLink} that a tiling link on a surface $F$ corresponds to a $4$--valent tiling of $F$ by regular polygons, whereas a right-angled tiling link requires a $[m,n,m,n]$ pattern at every vertex. In this subsection, we consider both right-angled and non-right-angled links on $T^2$.

If $L$ is a right-angled tiling link in $T^2 \times I$, 
the only possibility for the tiling is $(m,n) = (4,4)$ or $(m,n) = (6,3)$. The arithmeticity picture of these right-angled links is already known. Indeed, by a theorem of Champanerkar, Kofman, and Purcell~\cite[Theorem 4.1]{CKP:Biperiodic}, the exteriors of $[4,4,4,4]$ and $[6,3,6,3]$ tiling links are arithmetic. 

Turning to canonicity, we can prove Theorem~\ref{Thm:CanonicalOnTorus}, which is an analogue of Theorem~\ref{Thm:Canonical} for the class of all tiling links on $T^2$ (not necessarily right-angled). 
The proof of this result requires the following definition (compare \cite[Definition 3.6]{FP:Links}).
Let $t \subset \HH^2$ be an ideal triangle. The \emph{midpoint} of an edge $e \subset t$ is the unique point $p \in e$ such that a geodesic from $p$ to the opposite vertex of the triangle meets $e$ perpendicularly.

\begin{named}{Theorem~\ref{Thm:CanonicalOnTorus}}
Let $L \subset T^2 \times I$ be a tiling link.
Then there is a natural choice of cusp neighborhoods in $M = \Ext(L)$ such that the canonical polyhedral decomposition of $M$ consists of regular ideal tetrahedra and regular ideal octahedra. 
\end{named}
 
\begin{proof}
Recall that $M$ decomposes into geometric bipyramids, whose midsections correspond to the polygons of the tiling. (See
\cite[Theorem 4.4]{ACM:K-uniform} and \cite[Theorem 3.5]{CKP:Biperiodic}.)
We pull back this decomposition of $M$ to obtain a decomposition of $\HH^3$ into regular ideal bipyramids.

By assumption, the link $L$ corresponds to a regular Euclidean tiling, so the collection of ideal bipyramids in this decomposition are centered on triangles, squares, or hexagons \cite[Lemma 3.3]{CKP:Biperiodic}. For square horizontal midsections, the ideal bipyramids are regular ideal octahedra. For triangular horizontal midsections, each ideal bipyramid consists of two regular ideal tetrahedra. For hexagonal horizontal midsections, each ideal bipyramid consists of six regular ideal tetrahedra glued along a central stellating edge. In all of these cases, the boundary of each bipyramid consists of ideal triangles that are glued midpoint to midpoint (see Figure~\ref{Fig:Horoball expansion}). Thus, for any edge $e$ of the bipyramid decomposition of $M$, there is a well-defined midpoint, independent of the choice of triangle adjacent to $e$.

We can use the midpoints of edges to determine a choice of horoballs in $\HH^3$ that will project to a disjoint collection of cusps in $M$.
In $\HH^3$, we expand a horoball about each ideal vertex $v$ of each polyhedron, until that horoball meets the midpoints of the edges into $v$. As shown in Figure~\ref{Fig:Horoball expansion}, this happens simultaneously for all edges into $v$. Furthermore, since the bipyramids are glued to one another by isometries of ideal triangles, and the gluing sends midpoints to midpoints, the choice of horoball expansion is consistent as we move from one polyhedron to the next. The deck group $\Gamma = \pi_1(M)$ acts by isometry on the polyhedra, so the choice of horoballs is naturally $\Gamma$--equivariant, and defines a choice of cusps in $M$. We construct the canonical decomposition of $M$ with respect to this choice of cusps. 

\begin{figure}
    \centering
        \begin{overpic}[width=3.5in, trim={ 0in 7.25in 0in 0in}, clip]{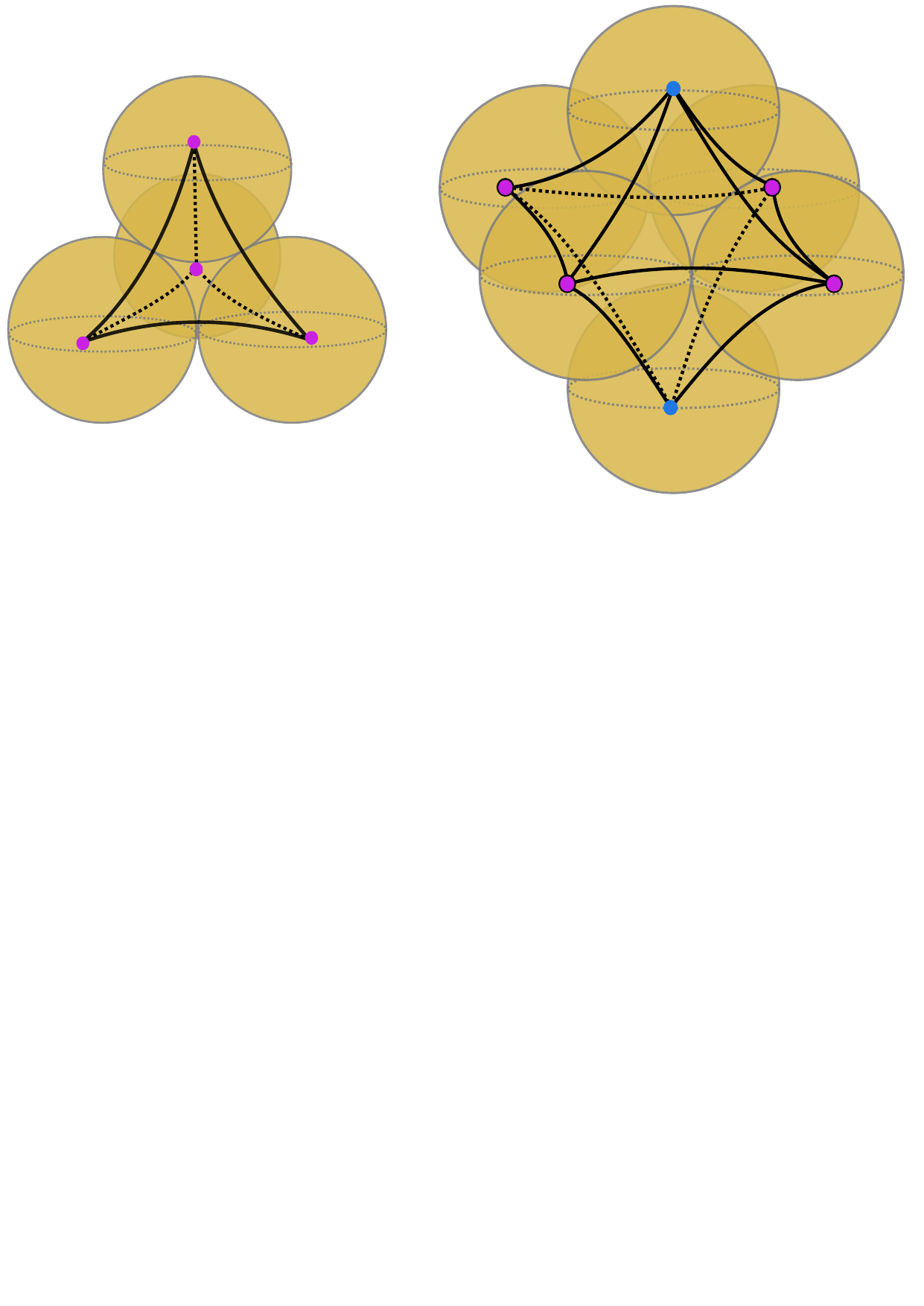}
        \end{overpic}
        \caption{An ideal regular tetrahedron and ideal regular octahedron with horoballs at each vertex. The horoballs are pairwise tangent at the midpoints of edges of the ideal polyhedra.}
        \label{Fig:Horoball expansion}
    \end{figure}

As in the proof of Theorem~\ref{Thm:Canonical}, we subdivide each regular ideal tetrahedron or octahedron $\Delta$ into basins of attraction, one for each ideal vertex. The boundary of each basin $Q$ consists of totally geodesic polygons, with each polygon contained in a plane of reflective symmetry of the ambient regular ideal tetrahedron or octahedron. See Figure~\ref{Fig:Bipyr with planes}. 

     \begin{figure}[h]
    \centering
        \begin{overpic}[width=5.5in, trim={ 0in 7.3in 0in 0in}, clip]{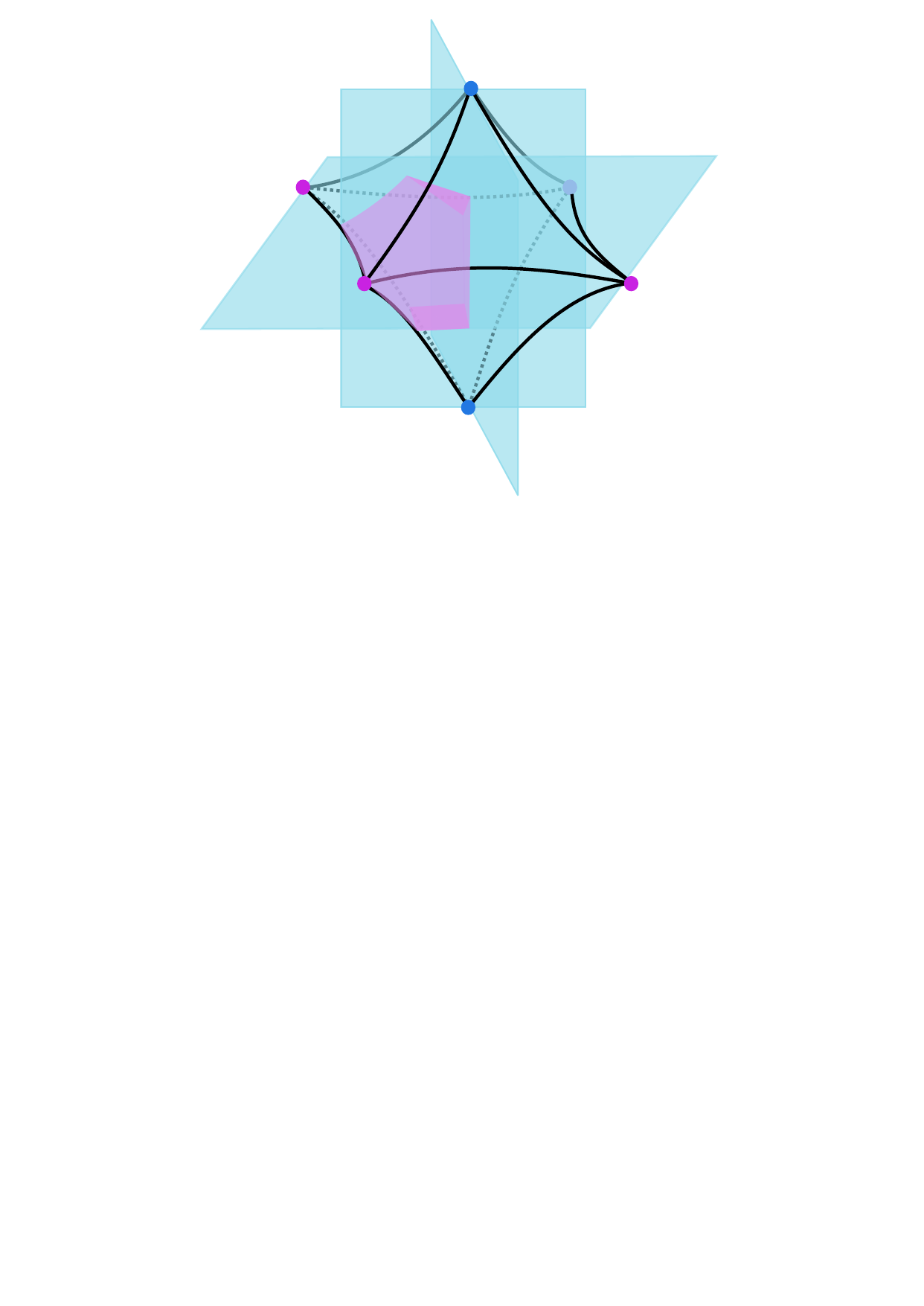}
        \put(190,105){$Q$}
        \put(150,80){$v$}
        \end{overpic}
        \caption{A regular ideal octahedron with the basin of attraction $Q$, corresponding to ideal vertex $v$, shown in pink. Reflection planes are drawn in blue.}
        \label{Fig:Bipyr with planes}
    \end{figure}

The cut locus $\Sigma \subset M$ and the canonical decomposition $\mathcal T$ dual to $\Sigma$ pull back to a $2$--complex $\widetilde \Sigma \subset \HH^3$ and a polyhedral decomposition $\widetilde {\mathcal T}$ that is dual to $\widetilde \Sigma$. We claim that $\widetilde {\mathcal T}$ coincides with our given tiling of $\HH^3$ by regular ideal octahedra and tetrahedra. As in the proof of Theorem~\ref{Thm:Canonical}, we show this by proving that each component of $\HH^3 \setminus \widetilde \Sigma$ is the union of all of the basins of attraction of the horoballs.

Take an arbitrary point $x \in \HH^3 \setminus \widetilde \Sigma$ and again let $H_x$ be the closest horoball to $x$. 
We claim that a distance-realizing geodesic $\gamma$ from $x$ to $H_x$ cannot leave the basin $Q$ that contains its initial segment. If $\gamma$ leaves $Q$, then it does so by crossing a face of $Q$ transversely. As mentioned above, each face of $Q$ is contained in a plane of reflection $\Pi$ of the ambient ideal tetrahedron or octahedron (Figure~\ref{Fig:Bipyr with planes}). Let $\overline \gamma$ be the reflection of $\gamma$ in $\Pi$. Then, exactly as in Figure~\ref{Fig: Face with intersection}, we may use a portion of $\gamma$ and a portion of $\overline \gamma$ to build a shorter path from $x$ to a horoball, obtaining a contradiction. This proves the claim and the theorem.
\end{proof}

\begin{remark}
Theorem \ref{Thm:CanonicalOnTorus} describes a canonical decomposition for the exteriors of tiling links in $T^2 \times I$. However, this canonical decomposition is not necessarily maximally symmetric. This is because the quotient of $M = \Ext(L) = T^2 \times (0,1) \setminus L$ by its symmetry group typically contains at least two cusps: one corresponding to components of $L$, and the other corresponding to the tori $T^2 \times \{0,1\}$.
Unlike the higher genus setting, we cannot identify a one-cusped quotient.
\end{remark}

\begin{remark}
The tiling links in $T^2 \times I$ that admit decompositions with at least one regular ideal tetrahedron and one regular ideal octahedron are examples of the \textit{mixed-platonic links} studied by Chesebro, Chu, DeBlois, Hoffman, Mondal, and Walsh \cite{CCDHMW:Mixed-platonic}. For mixed-platonic links, these authors independently prove that the decomposition into regular ideal tetrahedra and octahedra is canonical
\cite[Corollary 6.6]{CCDHMW:Mixed-platonic}.
\end{remark}

\subsection{Links projecting to $S^2$}
When the projection surface is $F = S^2$, there are are three tilings with a $[m,n,m,n]$ pattern: $[3,3,3,3]$, $[4,3,4,3]$, and $[5,3,5,3]$. These tilings define link exteriors in $S^3$. Hatcher showed that the first two link exteriors are arithmetic~\cite{Hatcher:Arithmetic}. Thus, to prove Theorem~\ref{Thm:MainArithmeticity}, it remains to sort out the arithmeticity of $[5,3,5,3]$. Along the way, we identify the canonical decomposition of this link exterior.

\begin{prop}\label{Prop:S^3Arithmeticity}
Let $L \subset S^3$ be the right-angled tiling link corresponding to the $[5,3,5,3]$ tiling of $S^2$. Then $\Ext(L) = S^3 \setminus L$ is non-arithmetic. Furthermore, the maximally symmetric canonical decomposition of $\Ext(L)$ consists of two ideal icosidodecahedra that meet along the checkerboard surfaces of the projection diagram $\pi(L)$.
\end{prop}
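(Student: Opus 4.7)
The plan is to establish both parts of the proposition by combining the right-angled structure of $\Ext(L)$ with tools from Sections~\ref{Sec:Orbifolds} and~\ref{Sec:Canonical}. By Theorem~\ref{Thm:RGCR}, the two checkerboard surfaces of $\pi(L)$ are totally geodesic in $\Ext(L) = S^3 \setminus L$ and meet at right angles. Cutting along them divides $\Ext(L)$ into two convex pieces, each with $30$ ideal vertices (one per crossing of $L$), $60$ ideal edges, and $32$ totally geodesic faces --- namely $12$ regular ideal pentagons and $20$ regular ideal triangles --- arranged in the $[5,3,5,3]$ pattern at every ideal vertex. Since all dihedral angles are $\pi/2$ and Mostow--Prasad rigidity forces the hyperbolic structure to realize the full icosahedral symmetry group $I_h$, each piece is a regular ideal right-angled icosidodecahedron $\mathcal I \subset \HH^3$.

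For non-arithmeticity, I would use invariant trace fields as a commensurability invariant. Let $\orbO = \Ext(L)/A_5$, where $A_5$ denotes the orientation-preserving icosahedral symmetry group of the tiling. The orbifold $\orbO$ contains a rotation $\rho$ of order $5$ about each pentagonal axis, and $\trace(\rho^2) = 2\cos(2\pi/5) = (\sqrt{5}-1)/2$. By Definition~\ref{Def:InvariantTraceField}, we have $k\pi_1(\orbO) \supseteq \Q(\sqrt{5})$. Since invariant trace fields are commensurability invariants and $\Ext(L)$ covers $\orbO$, the same containment holds for $k\pi_1(\Ext(L))$. By Definition~\ref{Def:Arithmetic}, an arithmetic Kleinian group with cusps must have invariant trace field equal to some imaginary quadratic field $\Q(\sqrt{-d})$, which has no real embeddings and therefore cannot contain the real field $\Q(\sqrt{5})$. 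Hence $\Ext(L)$ is non-arithmetic.

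For the canonical decomposition, I would adapt the basin-of-attraction argument from the proofs of Theorems~\ref{Thm:Canonical} and~\ref{Thm:CanonicalOnTorus}. Since $A_5$ acts transitively on the six decagonal great circles of $\mathcal I$, which correspond to the six components of $L$, the orbifold $\orbO$ has a single cusp, and so does the minimal orbifold $\omin$; the maximally symmetric choice of horoball neighborhoods in $\Ext(L)$ therefore has all six horoballs of equal size. Edge-transitivity of $\mathcal I$ allows equivariant expansion until these horoballs become simultaneously tangent at the midpoints of all $60$ edges. Each copy of $\mathcal I$ then decomposes into $30$ basins of attraction, one per ideal vertex, bounded by the perpendicular bisectors of edges; each such bisector is a reflection plane of $I_h$, because the involution swapping two adjacent ideal vertices of $\mathcal I$ lies in $I_h$. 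The reflection trick from the prior proofs then shows that a distance-realizing geodesic from any point to its closest horoball cannot cross a basin boundary, so the cut locus coincides with the union of these reflection-bounded polygons and the dual canonical decomposition is exactly the two ideal icosidodecahedra meeting along the checkerboard surfaces.

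The main technical subtlety I anticipate is verifying that basin boundaries extend consistently across the checkerboard surfaces between the two copies of $\mathcal I$, so that the reflection argument applies uniformly when a distance-realizing geodesic crosses from one icosidodecahedron to the other. This follows from the face-by-face gluing of the two pieces being the identity on shared checkerboard faces, which preserves edge midpoints and therefore the basin structure.
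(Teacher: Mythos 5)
Your non-arithmeticity argument is correct but takes a genuinely different route from the paper. The paper shows that $\Ext(L)$ covers a Coxeter orbifold $\orbP$, obtained by repeating the construction of Figures~\ref{Fig:AngleCalcs} and~\ref{Fig:DMReflectionQuotient} with finite apexes and $(m,n)=(5,3)$ (Figure~\ref{Fig:53-polyhedron}), and then applies Vinberg's criterion (Theorem~\ref{Thm:Vinberg}): the cyclic product $a_{12}a_{21}=4\cos^2(\pi/5)=2\cos(2\pi/5)+2$ is irrational. You instead pass to the orientable quotient $\Ext(L)/A_5$, use the order-$5$ elliptic element $\rho$ with $\trace(\rho^2)=\pm 2\cos(2\pi/5)$ to get $\Q(\sqrt{5})\subseteq k\Gamma$, and conclude non-arithmeticity since a cusped arithmetic group has invariant trace field $\Q(\sqrt{-d})$. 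Both arguments ultimately rest on the irrationality of $\cos(2\pi/5)$; yours via traces, the paper's via Gram-matrix cyclic products. One small repair: the reason $\Q(\sqrt{5})\not\subseteq\Q(\sqrt{-d})$ is not ``no real embeddings'' but simply that both fields have degree $2$ over $\Q$, so containment would force equality of a real and an imaginary quadratic field. What the paper's route buys is the one-cusped Coxeter orbifold $\orbP$ itself, which is then reused for the cusp choice and the reflection argument; also note that your opening derivation of the two regular ideal right-angled icosidodecahedra is done in the paper by citing Gan rather than by the symmetry argument you sketch.

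For the canonical decomposition you follow the same basin-of-attraction and reflection scheme as Theorems~\ref{Thm:Canonical} and~\ref{Thm:CanonicalOnTorus}, but with horoballs pulled back from the orientable one-cusped quotient $\Ext(L)/A_5$ rather than from a reflection orbifold, and this is where the one real gap sits. When you reflect the distance-realizing geodesic $\gamma$ in a plane $\Pi$ containing a wall of a basin, you need the reflected ray $\bar{\gamma}$ to terminate on a horoball of the chosen family. In the paper's proof this is automatic: the horoballs are the preimage of the single cusp of $\orbP$, and the reflection in $\Pi$ lies in the deck group $\Gamma(P)$ of $\HH^3\to\orbP$, so the family is invariant. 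The reflections you invoke lie in $I_h$ but not in the deck group of the $A_5$-quotient, and $A_5$-equivariance alone does not immediately give invariance of the lifted horoball family under them, nor under such reflections applied globally (the nearest horoball to $x$ need not be centered at a vertex of the icosidodecahedron containing $x$). This is fixable---for instance, observe that your tangent-at-edge-midpoints family is determined intrinsically by the tiling of $\HH^3$ by right-angled ideal icosidodecahedra and hence is preserved by every isometry preserving that tiling, reflections included; or simply quotient further to the Coxeter orbifold, as the paper does, so the reflections become deck transformations---but as written the step is missing. The subtlety you flag about consistency across the checkerboard faces is comparatively harmless, and your justification of maximal symmetry via the single cusp of the quotient orbifold matches the paper's.
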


\begin{proof}
We begin by showing that $M = S^3\setminus L$ covers the Coxeter orbifold $\mathcal{P}=\HH^3 / \Gamma(P)$ corresponding to the Coxeter polyhedron $P$ in Figure \ref{Fig:53-polyhedron}. To show this, we follow exactly the same gluing and quotienting process shown in Figures~\ref{Fig:AngleCalcs} and~\ref{Fig:DMReflectionQuotient}, except with the ultra-ideal apexes replaced by finite apexes. We substitute $m=5$ and $n=3$. Figure~\ref{Fig:53-polyhedron} shows the wedges at the beginning of the construction and the polyhedron $P$ at the end; all of the intermediate steps are exactly the same.

\begin{figure}
    \centering
    \begin{overpic}[abs,abs,unit=1mm,scale=.8,trim={.5in 8in 0in 0in}, clip]{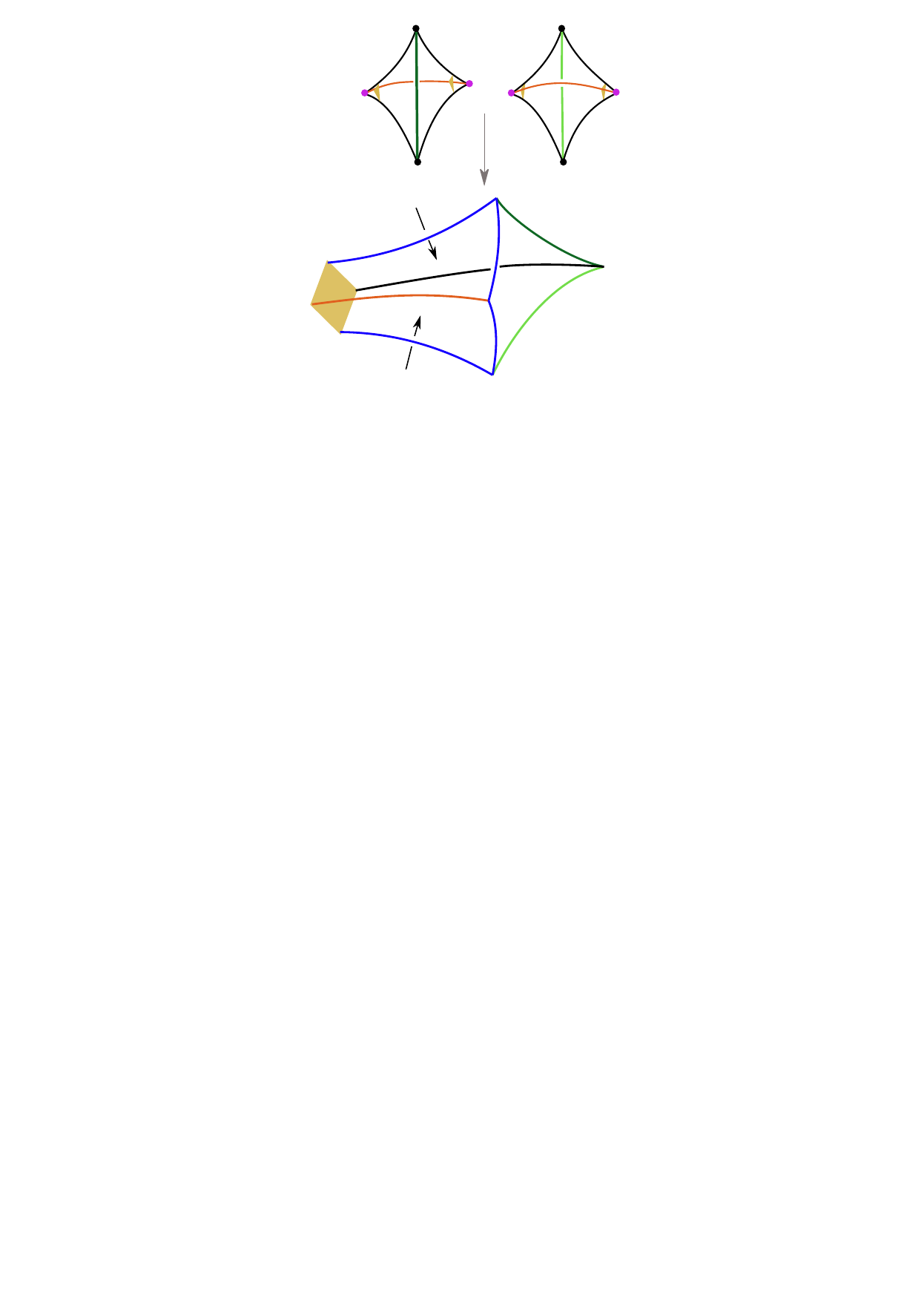}
    \put(62,39){$F_2$}
    \put(60,5){$F_3$}
    \put(72,28){$F_5$}
    \put(70,15){$F_4$}
    \put(83,21){$F_1$}
    \put(62,56){$\frac{2\pi}{5}$}
    \put(93,56){$\frac{2\pi}{3}$}
    \put(58,65){$\frac{\alpha_5}{2}$}
    \put(85,65){$\frac{\alpha_3}{2}$}
    \put(51,29){$2$}
    \put(52,11){$2$}
    \put(58,18){$2$}
    \put(72,22){$2$}
    \put(81,29){$2$}
    \put(81,15){$2$}
    \put(89,33){$5$}
    \put(89,17){$3$}
    \end{overpic}

    \begin{tikzcd}
    F_4 
    \arrow[d,-,"\infty"] &  & F_5 \arrow[d,-,"\infty"] \\
    F_2 \arrow[r,-,"5"] & F_1 \arrow[r,-,"3"] & F_3
\end{tikzcd}

    \caption{The top panel shows wedges corresponding to the $[5,3,5,3]$ tiling. The black vertices of the wedges (at the apexes) are finite while the pink vertices (on the horizontal edge) are ideal. The middle panel is the Coxeter polyhedron $P$ for the $[5,3,5,3]$ tiling link, with faces labeled $F_1$ through $F_5$. The bottom panel is the Coxeter diagram of $P$.}
    \label{Fig:53-polyhedron}
\end{figure}

To see that $M$ is non-arithmetic, we will show that $\mathcal{P}$ is non-arithmetic. Using the Coxeter diagram of $P$ (Figure~\ref{Fig:53-polyhedron}), we build the Gram matrix as in Definition \ref{Def:GramMatrix}. Since faces $F_1$ and $F_2$ meet at a $\pi/5$ angle, we know that the Gram matrix of $P$ has entries $a_{12}=a_{21} = -2\cos(\pi/5)$. Therefore it has cyclic product $b_I=a_{12}a_{21}=(-2\cos(\pi/5))^2=2(\cos(2\pi/5)+1)$, which is  irrational because $\cos(2\pi/5)$ is irrational. Thus, by Vinberg's criterion (Theorem \ref{Thm:Vinberg}), $\orbP$ is non-arithmetic.

To find the maximally symmetric canonical polyhedral decomposition of $M$, we follow the same argument as in Theorem~\ref{Thm:Canonical}.
Let $C \subset \orbP$ be an embedded horospherical neighborhood of the single cusp of $P$. The preimage of $C$ in $M$ is an equivariant choice of cusp neighborhoods in $M$. We build the canonical decomposition with respect to this choice of cusps.

Next, decompose $M$ into two ideal polyhedra following Menasco and Thurston's polyhedral decomposition for alternating links. The result is two ideal icosidodecahedra. By a theorem of Gan~\cite[Theorem 3.14]{Gan:Alternating}, this is a geometric decomposition of the complement and the face normals in each polyhedron intersect at a common point. Both ideal polyhedra admit planes of reflective symmetry which intersect in the point at which all of the face normals of the polyhedron intersect. 

Lift this ideal decomposition $\mathcal T$ to a tiling of $\HH^3$, and lift the cut locus $\Sigma$ as well. As in Theorem \ref{Thm:Canonical}, 
we subdivide each polyhedron $\Delta$ into basins of attraction, with one basin for each ideal vertex. The vertex-transitive symmetry group of each ideal icosidodecahedron implies that the walls of each basin $Q$ are contained in planes of reflective symmetry.

As in Theorem~\ref{Thm:Canonical}, 
we claim that any given $x \in \HH^3 \setminus \widetilde \Sigma$ that has a unique closest horoball $H_x$, this closest horoball is centered at the ideal vertex of the 
basin $Q$ containing $x$. Suppose for contradiction that $H_x$ is not centered at the ideal vertex of $Q$. Then the geodesic from $x$ to $H_x$, called $\gamma$, must intersect some reflective plane $\Pi$ that contains a wall of $Q$. Then, just as in Theorem~\ref{Thm:Canonical}, we may reflect $\gamma$ across $\Pi$ to find a closer horoball, obtaining a contradiction.
\end{proof}

\begin{remark}\label{Rem:S^3Canonical}
The tiling links on $S^2$ 
  corresponding to the $[3,3,3,3]$ and $[4,3,4,3]$ tilings also share the property that their checkerboard polyhedral decompositions are their canonical decompositions. This can be shown by repeating  the steps of the proof of Proposition~\ref{Prop:S^3Arithmeticity}, apart from the paragraph about arithmeticity, substituting $m=n=3$ or $m=4$ and $n=3$. 

These links and their polyhedral decompositions have been extensively studied. The $[3,3,3,3]$ link is the Borromean rings, and its hyperbolic structure (consisting of two regular octahedra) is constructed in Thurston's notes \cite[Chapter 3]{Thurston:Notes}. The $[4,3,4,3]$ is the cuboctahedral link, and its hyperbolic structure (consisting of two right-angled cuboctahedra) is described in Hatcher \cite[Section 3, Example 1]{Hatcher:Arithmetic}.
\end{remark}

\subsection{Proofs of the main theorems}

We can now assemble all of the above results to prove Theorems~\ref{Thm:MainArithmeticity} and~\ref{Thm:MainCommensurability}, which were stated in the introduction.

\begin{named}{Theorem~\ref{Thm:MainArithmeticity}}
    The following right-angled tiling links have arithmetic exteriors:
    \begin{itemize}
\item The links on $S^2$ corresponding to the $[3,3,3,3]$ and $[4,3,4,3]$ tilings.
    
\item The links on $T^2$ corresponding to the $[4,4,4,4]$ and $[6,3,6,3]$ tilings.
         
\item The links on $S_g$ for $g>1$ corresponding to the $[6,4,6,4]$ and $[6,6,6,6]$ tilings.
    \end{itemize}
    All other right-angled tiling links are non-arithmetic.
\end{named}

\begin{proof}
Let $L$ be a right-angled tiling link corresponding to a $[m,n,m,n]$ tiling of a surface $F$. We proceed based on the genus of $F$.

The only right-angled tiling links on $S^2$ come from the $[3,3,3,3]$ tiling, the $[4,3,4,3]$ tiling, and the $[5,3,5,3]$ tiling. 
By Hatcher's theorem~\cite{Hatcher:Arithmetic}
the $[3,3,3,3]$ and $[4,3,4,3]$ tiling links are arithmetic. 
% (In particular, the $[3,3,3,3]$ link is the Borromean rings.) 
Meanwhile, by Proposition~\ref{Prop:S^3Arithmeticity}, the $[5,3,5,3]$ tiling link is non-arithmetic.

The only right-angled tiling links on $T^2$ correspond to the $[4,4,4,4]$ and $[6,3,6,3]$ tilings. By a theorem of Champanerkar, Kofman, and Purcell~\cite[Theorem 4.1]{CKP:Biperiodic}, both of these links are arithmetic.

Finally, Theorem~\ref{Thm:TilingArithmeticity} shows that the only arithmetic tiling links on hyperbolic surfaces correspond to the $[6,4,6,4]$ and $[6,6,6,6]$ tilings.
\end{proof}

\begin{named}{Theorem~\ref{Thm:MainCommensurability}}
Suppose that $L \subset F \times I$ and $L' \subset F' \times I$ are right-angled tiling links. Then their complements (or doubled complements, if the surfaces are hyperbolic) are commensurable if and only if one of the following holds:
    \begin{itemize}
        \item $L$ and $L'$ correspond to the same $[m,n,m,n]$ tiling.
        \item Both $L$ and $L'$ correspond to  $[3,3,3,3]$, $[4,4,4,4]$, or $[6,6,6,6]$ tilings.
    \end{itemize}
\end{named}

\begin{proof}
For this proof, let $M$ denote either $\Ext(L)$ if $L$ is a tiling link on $S^2$ or $T^2$, or the double $D\Ext(L)$ if $L$ is a tiling link on a hyperbolic surface. We adopt the same convention for $M'$.

Suppose that $L$ and $L'$ correspond to the same $[m,n,m,n]$ tiling. If the surfaces $F$ and $F'$ are hyperbolic, then 
Corollary \ref{Cor:SameTileCommens} implies $M$ and $M'$ are commensurable. Champanerkar, Kofman, and Purcell showed the corresponding statement for the torus case~\cite[Theorem 4.1]{CKP:Biperiodic}. In the $S^3$ case, Gan showed that there is only one right-angled tiling link corresponding to each tiling \cite{Gan:Alternating}.  

If both $L$ and $L'$ correspond to $[3,3,3,3]$, $[4,4,4,4]$, or $[6,6,6,6]$ tilings, then $M$ and $M'$ are arithmetic  with invariant trace field $\Q(i)$. This is proved in  Hatcher~\cite{Hatcher:Arithmetic}, Champanerkar, Kofman, and Purcell~\cite[Theorem 4.1]{CKP:Biperiodic}, and Corollary \ref{Cor:InvariantTraceFields}, respectively.  All of these manifolds are commensurable to the Bianchi orbifold $\HH^3 / PSL(2, \Z[i])$, and therefore to one another.

Next, suppose that $M$ and $M'$ are commensurable and non-arithmetic. By Theorem~\ref{Thm:MainArithmeticity}, either both of them correspond to hyperbolic tilings, or one of them corresponds to the $[5,3,5,3]$ tiling of $S^2$. If $M$ and $M'$ correspond to hyperbolic tilings, Theorem~\ref{Thm:HighGenusCommensurability} implies the tiling types must be equal. Now suppose without loss of generality that $M$ is the complement of the $[5,3,5,3]$ link. Combining Theorem~\ref{Thm:GHH+Margulis} with Theorem~\ref{Thm:Canonical} and Proposition~\ref{Prop:S^3Arithmeticity}, we see that $M'$ is the complement of the $[5,3,5,3]$ link as well.

Finally, suppose that $M$ and $M'$ are commensurable and arithmetic. Then they must have the same invariant trace field \cite[Theorem 3.34]{MR:Arithmetic}. Theorem~\ref{Thm:MainArithmeticity} provides a complete list of the associated tilings. The tiling type of $L$ determines the invariant trace field of $M$, as follows:
\begin{itemize}
\item If $L$ corresponds to the $[3,3,3,3]$, $[4,4,4,4]$, or $[6,6,6,6]$ tiling, then as already discussed $M$ has invariant trace field $\Q(i)$. 
\item If $L$ corresponds to the $[4,3,4,3]$ tiling, then $M$ has invariant trace field $\Q(i\sqrt{2})$, by~\cite{Hatcher:Arithmetic}.
\item If $L$ corresponds to the $[6,3,6,3]$ tiling, then $M$ has invariant trace field $\Q(i\sqrt{3})$, by \cite[Theorem 4.1]{CKP:Biperiodic}.
\item If $L$ corresponds to the $[6,4,6,4]$ tiling, then $M$ has invariant trace field $\Q(i\sqrt{6})$ by Corollary~ \ref{Cor:InvariantTraceFields}.
\end{itemize}
The same applies to $M'$. In particular, the only invariant trace field with more than one tiling type is $\Q(i)$.
\end{proof}

   \bibliographystyle{amsplain}
\bibliography{biblio.bib}

\end{document}